\newcommand{\fg}{\mathfrak g}
\newcommand{\fh}{\mathfrak h}
\newcommand{\dg}{\dot{\mathfrak g}}
\newcommand{\dfh}{\dot{\mathfrak h}}
\newcommand{\al}{\alpha}
\newcommand{\wt}{\widetilde}
\newcommand{\wh}{\widehat}
\newcommand{\detmn}{\det{\bm{m}\choose \bm{n}}}
\newcommand{\ot}{\otimes}
\newcommand{\CR}{\mathcal{R}}
\newcommand{\CG}{\mathcal{G}}
\newcommand{\CH}{\mathcal{H}}
\newcommand{\CS}{\mathcal{S}}
\newcommand{\CE}{\mathcal{E}}
\newcommand{\C}{\mathbb{C}}
\newcommand{\R}{\mathbb R}
\newcommand{\N}{\mathbb N}
\newcommand{\Z}{\mathbb Z}
\newcommand{\rd}{\mathrm{d}}
\newcommand{\rk}{\mathrm{k}}
\newcommand{\rh}{\mathrm{h}}
\newcommand{\re}{\mathrm{e}}
\newcommand{\rf}{\mathrm{f}}
\newcommand{\ba}{\begin {eqnarray}}
\newcommand{\ea}{\end {eqnarray}}
\newcommand{\baa}{\begin {eqnarray*}}
\newcommand{\eaa}{\end {eqnarray*}}
\newcommand{\be}{\begin {equation}}
\newcommand{\ee}{\end {equation}}
\newcommand{\bee}{\begin {equation*}}
\newcommand{\eee}{\end {equation*}}
\newcommand{\U}{\mathcal{U}}
\newcommand{\te}[1]{\textnormal{{#1}}}
\theoremstyle{Theorem}
\theoremstyle{Theorem}
\newtheorem{thm}{Theorem}[section]
\newtheorem{cort}[thm]{Corollary}
\newtheorem{lemt}[thm]{Lemma}
\newtheorem{prpt}[thm]{Proposition}
\theoremstyle{Theorem}
\theoremstyle{Theorem}
\theoremstyle{Plain}
\theoremstyle{Definition}
\newtheorem{dfnt}[thm]{Definition}
\def\({\left(}
\def\){\right)}
\def \<{{\langle}}
\def \>{{\rangle}}
\numberwithin{equation}{section}
\title[Repn of EALAs]{Classification of integrable representations for toroidal extended affine Lie algebras}
\author{Fulin Chen$^1$}
\thanks{{1. Partially supported by China NSF grants (No.11501478)}}
\address{School of Mathematics Science, Xiamen University,
 Xiamen, China 361005} \email{chenf@xmu.edu.cn }
\author{Zhiqiang Li}
\address{School of Mathematics Science, Xiamen University,
 Xiamen, China 361005} \email{lzq06031212@sina.com}
 \author{Shaobin Tan$^2$}
 \thanks{{2. Partially supported by China NSF grants (Nos.11471268, 11531004)}}
 \address{School of Mathematics Science, Xiamen University,
 Xiamen, China 361005} \email{tans@xmu.edu.cn}
\subjclass[2010]{22E50} \keywords{toroidal extended affine Lie algebra,
integrable representation, irreducible representation}
\begin{document}

\begin{abstract}
 In this paper, we classify the irreducible integrable modules 
   with finite dimensional weight spaces and non-trivial $\wt\fg_c$-action 
 for the nullity $2$ toroidal extended affine Lie algebra $\wt\fg$,
  where $\wt\fg_c$ is the core of $\wt\fg$.

\end{abstract}
\maketitle

\section{Introduction}
In this paper we study the integrable representation for the nullity $2$ toroidal extended affine Lie algebras (EALAs for short).
The notion of EALAs  was first introduced by Hoegh-Krohn and Torresani in \cite{H-KT} under the name of
quasi-simple Lie algebras. Since then the structure theory of EALAs has been intensively studied for the past twenty years
 (see \cite{AABGP,BGK,N}
and the references therein).
 An EALA $E$ is by definition a complex Lie algebra, together with a non-zero finite dimensional
ad-diagonalizable subalgebra $H$  and a non-degenerate invariant symmetric bilinear form $(\cdot|\cdot)$,
that satisfies
a list of natural axioms.
A crucial consequence of these axioms is that
the form $(\cdot|\cdot)$ on $E$ induces a semi-positive bilinear form on the $\R$-span of the root system of $E$ relative to $H$.
    Then the root system of $E$ divides into a disjoint union of the sets of isotropic and non-isotropic roots.
    The subalgebra $E_c$ of $E$ generated by non-isotropic root vectors is called the core of $E$, which is in fact an ideal of $E$.
An $E$-module is said to be integrable if it admits a weight space decomposition
relative to $H$  and all non-isotropic root vectors in $E$ act locally nilpotent on it.
We remark that the integrability of a weight $E$-module $V$   depends only  on the action of $E_c$.
In particular, if $E_c.V=0$, then $V$ is automatically integrable.

 One of the axioms for EALAs requires that the rank of the
group generated by the isotropic roots is finite, and this rank is called the nullity of EALAs.
The nullity $0$ EALAs are nothing but the finite dimensional simple Lie algebras,
and the affine Kac-Moody algebras are precisely the nullity $1$  EALAs \cite{ABGP}.
It is well-known that in the study of the representation theory for finite or affine   Kac-Moody algebras, one of the main
ingredients is the classification of irreducible integrable representations.
The irreducible integrable modules (with finite  dimensional weight spaces) for the finite dimensional simple Lie
algebras are precisely
the  finite dimensional irreducible modules.
And,  such modules for the affine Kac-Moody algebras are the highest weight integrable modules,
their dual or the loop modules \cite{C,CP1,CP2}.
Similar to the nullity $0$ and nullity $1$ cases, one of the fundamental problems in the representation theory
of extended affine Lie algebras with higher nullities is to classify their irreducible integrable modules.

Toroidal extended affine Lie algebras  are  a class of
important EALAs that provide examples with arbitrary  nullities.
The construction of toroidal EALAs parallels with the untwisted affine Lie algebras.
Let $\dg$ be a finite dimensional simple Lie algebras, 
let $\mathcal R=\C[t_0^{\pm 1},\cdots,t_{N-1}^{\pm 1}]$ be the ring of Laurent polynomials
 in $N$  variables and let
  $\mathcal S$ be the
subspace of divergence zero derivations on $\mathcal R$ (which is also called the set of skew-derivations \cite{BGK}).
The nullity $N$ toroidal EALA
$\wt\fg=\wt\fg_c \oplus \CS$
is defined
by adding  the space $\mathcal S$ (possibly twisted
with a $2$-cocycle)
to the
 universal central extension
$\wt\fg_c=(\mathcal R\ot \dg)\oplus \mathcal K$ of the   multi-loop Lie algebra $\mathcal R\ot \dg$.
We recall that $\wt\fg_c$ is the core of $\wt\fg$ and is often called the toroidal Lie algebra.
One may see Section 2 or \cite[Section 1]{B} for details.

 Let $\mathcal D$ be the space of derivations over $\mathcal R$.
As a left $\mathcal R$-module, $\mathcal D$ has a basis $\rd_0,\cdots, \rd_{N-1}$, where $\rd_i=t_i\frac{\partial}{\partial t_i}$.
Besides toroidal EALAs, there are some other interesting extensions of the toroidal Lie algebra $\wt\fg_c$  that are obtained by
adding various subspace of $\mathcal D$. For examples, one can add the subspaces $\mathcal D_0=\C\rd_0\oplus \cdots \oplus\C\rd_{N-1}$,
$\mathcal D^*=\C\rd_0\oplus \CR \rd_1\oplus \cdots \oplus\CR\rd_{N-1}$ of $\mathcal D$, or the space $\mathcal D$ itself.
The irreducible integrable representations of the Lie algebras
 $\wt\fg_c\oplus \mathcal D_0$, $\wt\fg_c\oplus \mathcal D^*$ and  $\wt\fg_c\oplus \mathcal D$  with finite dimensional
weight spaces have been studied respectively in \cite{E1,E2},\cite{JM} and \cite{EJ}.
This paper is aimed to classify the irreducible integrable modules for the nullity $2$ toroidal EALAs.

We denote by $\wt\CS$  the quotient algebra of $\wt\fg$ obtained by modulo its core $\wt\fg_c$, which is isomorphic to the
Lie algebra of divergence zero derivations. When $N=2$, the derived subalgebra of $\wt\CS$ is often called
the (centerless) Virasoro-like algebra.
As we mention before, any weight $\wt\fg$-module with trivial $\wt\fg_c$-action is integrable.
Thus the category of integrable $\wt\fg$-modules with trivial $\wt\fg_c$-action
is equivalent to the category of weight $\wt\CS$-modules.
In recent years, the classification problem on the irreducible weight $\wt\CS$-modules with finite dimensional weight spaces
 has been studied by several authors, see \cite{BT,GL,JL,LS,LT} for details.

 In this paper, we give a complete classification of irreducible integrable representations for the nullity $2$ toroidal EALA $\wt\fg$ with
 finite dimensional weight spaces and non-trivial $\wt\fg_c$-action.
We remark that the classification  of the irreducible integrable modules for the nullity $N$ toroidal EALAs with $N\ge 3$ is different to that of the $N=2$ case, and will be given in a subsequent paper. So, from now on, in the rest of the paper we always assume that $N=2$.

To be more precisely about our result, 
we say that  an irreducible integrable $\wt\fg$-module
 $V$ has zero (resp. non-zero) central charge if  the central elements $\rk_0, \rk_1$ of $\wt\fg$
 act trivially (resp. non-trivially) on $V$.
In \cite{EJ}, by using the method developed in \cite{JM},  the classification of  irreducible
 integrable modules for
 $\wt\fg_c\oplus \mathcal D$ with zero central charge is reduced to the classification of
 irreducible modules for the semi-product algebra
$\mathcal R\rtimes \mathcal D$ with certain conditions.
As in \cite{EJ}, in the present work
 we reduce the classification
  problem for integrable $\wt\fg$-modules with zero central charge
  to the classification problem for  certain $\mathcal R\rtimes \wt\CS$-modules.
Such  $\mathcal R\rtimes \wt\CS$-modules have been classified in \cite{JL} (see also \cite{GL,BT}).
We emphasise that  Jiang-Meng's reduction method can not be applied directly to the toroidal EALAs,
as almost all lemmas used in this reduction need certain distinguished derivations that do not belong to $\CS$.
See \cite[Lemma 3.1, 3.2, 3.4, 3.9]{JM} for details.
To make Jiang-Meng's reduction work for the toroidal EALAs, we use rather different tricks to fulfill each step.
We first prove in Subsection 4.2 that the whole space $\mathcal K$ acts trivially on any irreducible integrable $\wt\fg$-module with
zero central charge.
The proof of this  assertion is based on a result of Chari-Pressely \cite{CP2} 
on integrable modules for extended loop algebras (see Lemma \ref{eloop1}).
Then, by using the technique developed in \cite{GL}, we give a  classification of the irreducible uniformly bounded
 $(\mathcal R\ot \dfh)\rtimes \wt\CS$-modules in Subsection 4.3, where $\dfh$ stands for the Cartan subalgebra of $\dg$.
 Finally, by putting the above results together, in Subsection 4.4 we  classify the irreducible integrable modules for $\wt\fg$
 in Theorem \ref{mainhw1} for the zero central charge case.
 A realization of such irreducible integrable $\wt\fg$-modules is also presented in Subsection 4.4.

In \cite{E2}, after a change of coordinates, 
the classification of irreducible integrable $\wt\fg_c\oplus \mathcal D_0$-modules with non-zero central charge is 
reduced to determine certain linear functionals on a distinguished Heisenberg algebra.
As done in \cite{E2},  after a change of coordinates, 
 we show that 
each irreducible integrable $\wt\fg$-module with non-zero central charge is a highest weight module associated
to certain linear functionals on a Heisenberg algebra $\wh\CH$ (see \eqref{hatch}).
 The sufficient and  necessary condition for such a highest weight module to be integrable and has
 finite dimensional weight spaces is given in Theorem \ref{mainhw2} and proved in Section 5.
 It turns out that the characterization of these linear functionals on $\wh\CH$ is more 
 subtle than that given in \cite{E2}.
 In particular, the notion of exp-polynomial function is needed in the toroidal EALAs case. One may see Subsection 3.2 for details. 
As a loop generalization of Billig's work \cite{B},
 a class of  irreducible integrable highest weight $\wt\fg$-modules with non-zero central charge were constructed in \cite{CLT}.
 This construction  plays a key role in our proof of Theorem \ref{mainhw2}.

We now  outline the structure of the paper.
In Section 2, we review some basics on nullity $2$ toroidal EALAs and their integrable modules.
 The main result, Theorem \ref{main}, of our paper is stated and proved in Section 3.
The proof of Theorem \ref{main} is based on two classification results (Theorem \ref{mainhw1} and Theorem \ref{mainhw2}) on certain
irreducible integrable highest weight $\wt\fg$-modules.
Section 4 and Section 5 are respectively devoted to the proofs of Theorem \ref{mainhw1} and Theorem \ref{mainhw2}.

We denote the sets of integers, non-negative integers, complex numbers, nonzero complex numbers and rational numbers by $\Z$, $\N$, $\C$, $\C^\times$ and $\mathbb{Q}$, respectively.

\section{Nullity $2$ toroidal extended affine Lie algebras}
In this section we collect some basic results about  nullity $2$ toroidal extended affine Lie algebras and their integrable representations, 
which will be used later on.

\subsection{The toroidal EALA $\wt\fg$} In this subsection we review the definition of the torodial EALA $\wt\fg$.

Let $\dg$ be a finite dimensional complex simple Lie algebra and
let $\dfh$ be a Cartan subalgebra of it. We denote by $\ell=\dim \dfh$  the rank of $\dg$.
Let $\dot{\Delta}$ (containing $0$) the root system of $\dg$ relative to the Cartan subalgebra $\dfh$.
Let $\<\cdot,\cdot\>$ be a non-degenerate invariant symmetric bilinear form on $\dg$,  and we will usually identify  $\dfh^\ast$ with $\dfh$ by means of it. We further assume that the invariant form  $\<\cdot ,\cdot\>$ is normalized so that the square length of long roots equal to $2$.
For each $\al\in \dot\Delta\setminus\{0\}$, let $\al^\vee=\frac{2\al}{\< \al,\al\>}$ indicate the corresponding coroot of $\al$.

Let $\mathcal R=\C[t_0^{\pm 1}, t_1^{\pm 1}]$ be the Laurent polynomial ring in two variables.
For $\bm{m}=(m_0,m_1)\in \Z^2$, we  write
\[t^{\bm{m}}=t_0^{m_0}t_1^{m_1}.\]
Denote by $\Omega_\mathcal R^1$ the space of K\"{a}hler differentials on $\mathcal R$.
As a left $\mathcal R$-module, $\Omega_\mathcal R^1$ has a basis $\rk_0, \rk_1$, where $\rk_i$, $i=0,1$, stands for the $1$-forms $t_i^{-1}\rd t_i$.
Form the quotient space $\mathcal K=\Omega^1_\mathcal R/\rd(\Omega_{\mathcal R}^1)$, where
\[\rd(\Omega_{\mathcal R}^1)=\text{Span}_{\C}\{m_0t^{\bm{m}}\rk_0+m_1t^{\bm{m}}\rk_1\mid \bm{m}=(m_0,m_1)\in \Z^2\}\]
is the space of exact $1$-forms in $\Omega_\mathcal R^1$.
The universal central extension of the loop algebra $\mathcal R\ot \dg$, called $2$-toroidal Lie algebra, can be realized as
\[\wt\fg_c=\(\mathcal R\ot \dg\)\oplus \mathcal K,\]
and its Lie bracket is given by (see \cite{MRY},\cite{EM})
\[ [t^{\bm{m}}\ot x, t^{\bm{n}}\ot y]=t^{\bm{m}+\bm{n}}\ot [x,y]+\< x,y\> \sum_{i=0,1} m_it^{\bm{m}+\bm{n}}\rk_i,\]
where $x,y\in \dg$, $\bm{m}=(m_0,m_1), \bm{n}=(n_0,n_1)\in \Z^2$, and $\mathcal K$ is central.

We denote by $\mathcal D=\te{Der}(\mathcal R)$ the space of derivations over $\mathcal R$.
As a left $\mathcal R$-module, $\te{Der}(\mathcal R)$ has a basis $\rd_0, \rd_1$, where $\rd_i=t_i\frac{\partial}{\partial t_i}$, $i=0,1$.
Let $\mathcal S$ be the subspace of $\mathcal D$ consisting of
skew derivations (also called divergence zero derivations). So, by definition, it is the subspace of $\mathcal D$
spanned by the degree zero derivations $\rd_0, \rd_1$ and the following skew derivations
\[ \rd_{\bm{m}}=m_0 t^{\bm{m}}\rd_1-m_1 t^{\bm{m}}\rd_0,\]
for $ \bm{m}\in \Z^2$.

Let $\mu$ be a fixed complex number.
We define an extension $\wt\fg(\mu)$ of $\wt\fg_c$ by the skew derivations as follows.
The underlying vector space of $\wt\fg(\mu)$ is
\[\widetilde{\fg}(\mu)=\wt\fg_c\oplus \mathcal S=\(\mathcal R\ot \dg\)\oplus \mathcal K\oplus \mathcal S,\]
and the remaining bracket relations on $\wt\fg$ are given by
\begin{eqnarray*}
&&[\rd_{\bm{m}}, t^{\bm{n}}\ot x]=\detmn t^{\bm{m}+\bm{n}}\ot x,\quad [\rd_i,t^{\bm{n}}\ot x]=n_i\, t^{\bm{n}}\ot x, \\
&&[\rd_{\bm{m}}, t^{\bm{n}}\rk_j]=\detmn t^{\bm{m}+\bm{n}}\rk_j+ (m_0\delta_{j,1}-m_1\delta_{j,0})\sum_{a=0,1} m_a\,t^{\bm{m}+\bm{n}}\rk_a,\\
&&[\rd_i, t^{\bm{n}}\rk_j]=n_i\, t^{\bm{n}}\rk_j,\quad [\rd_i,\rd_{\bm{m}}]=m_i\,\rd_{\bm{m}},\quad [\rd_0,\rd_1]=0,\\
&&[\rd_{\bm{m}},\rd_{\bm{n}}]=\detmn\rd_{\bm{m}+\bm{n}}+\mu\(\detmn\)^{2}\sum_{a=0,1} m_a\, t^{\bm{m}+\bm{n}}\rk_a,
\end{eqnarray*}
for $x\in \dg$, $\bm{m},\bm{n}\in \Z^2$ and $i,j=0,1$, where
\[\detmn=m_0n_1-n_0m_1.\]
The resulting Lie algebra $\wt\fg(\mu)$ is an extended affine Lie algebra with nullity $2$, and is often called
the nullity $2$ toroidal EALA.
For the sake of simplicity, in what follows we will  write $\wt\fg=\wt\fg(\mu)$ for convenience.
Note that the Cartan subalgebra of $\widetilde{\fg}$ is
given by
\[\widetilde{\fh}=\dfh\oplus \C\rk_0\oplus \C\rk_1 \oplus \C\rd_0 \oplus \C\rd_1,\]
and the non-degenerate invariant symmetric bilinear form $\<\cdot , \cdot\>$ on $\widetilde{\fg}$ can be determined as follows
\begin{eqnarray*}
&&\< t^{\bm{m}}\ot x, t^{\bm{n}}\ot y\>=\delta_{m_0+n_0,0}\delta_{m_1+n_1,0}
\< x, y\>,\ \< \rd_i, \rk_j \>=\delta_{i,j},\\
&&\<  \rd_{\bm{m}},  t_0^{n_0}t_1^{n_1}\rk_j\>=
(m_0\delta_{1,j}-m_1\delta_{0,j})\delta_{m_0+n_0,0}\delta_{m_1+n_1,0},
\end{eqnarray*}
and the others are trivial.

For each  $\al\in \dot{\Delta}$, we extend it to  a linear functional in $\wt\fh^*$ so that
\[\al(\rk_i)=0=\al(\rd_i),\]
for $i=0,1$, and define linear functionals $\delta_i$, $i=0,1$ on $\wt\fh$  by setting
\begin{align*}&\delta_i(\dfh)=0,\quad \delta_i(\rk_j)=0,\quad \delta_i(\rd_j)=\delta_{i,j},
\end{align*}
for $j=0,1$. Then the set
\[\wt\Delta=\{\al+m_0\delta_0+m_1\delta_1\mid \al\in \dot\Delta,\ m_0,m_1\in \Z\}\]
is the root system of $\wt\fg$ with respect to the Cartan subalgebra $\wt\fh$. And we have the following root space decomposition of $\fg$
\[\wt\fg=\bigoplus_{\al\in \wt\Delta}\wt\fg_\al,\]
where
\[\wt\fg_\al=\{x\in \wt\fg\mid [h,x]=\al(h)x,\  h \in \wt\fh\}.\]

\subsection{Integrable representations of $\wt\fg$}
In this subsection we introduce the definition of integrable modules for the extended affine Lie algebra $\wt\fg$.

We say that a $\wt\fg$-module $V$ is a weight module if it admits a weight space
decomposition
\[V=\oplus_{\lambda\in \wt\fh^*} V_\lambda\] relative to its Cartan subalgebra $\wt\fh$, where
\[V_\lambda=\{v\in V\mid h.v=\lambda(h)v,\ \forall h\in \wt\fh\}.\]
Note  that the restriction of $\<\cdot,\cdot\>$ on $\wt\fh$ is non-degenerate.
Thus, by duality, it induces a non-degenerate bilinear form on $\wt\fh^*$.
We denote by
\[\wt\Delta^\times=\{\al\in \wt\Delta\mid \<\al,\al\>\ne 0\}\quad \te{and}\quad
\wt\Delta^0=\{\al\in \wt\Delta\mid \<\al,\al\>= 0\}\]
the sets of non-isotropic and isotropic roots in $\wt\Delta$, respectively.


\begin{dfnt}
A $\wt\fg$-module $V$ is said to be integrable if it is a weight module  and for every $\al\in \wt\Delta^\times$,
$\wt\fg_\al$ acts locally nilpotent on $V$.
\end{dfnt}

 We say that a $\wt\fg$-module $V$ has  central charge $(c_0,c_1)\in \C^2$ if  $\rk_i.v=c_iv$ for $v\in V$, $i=0,1$.
Furthermore, we say that $V$ has non-zero (reps. zero) central charge  if  $(c_0,c_1)\ne (0,0)$ (resp. $=(0,0)$).
The following is obvious.
\begin{lemt} Let $V$ be an irreducible integrable $\wt\fg$-module. Then $V$ has central charge $(c_0,c_1)$ for some $(c_0,c_1)\in \Z^2$.
\end{lemt}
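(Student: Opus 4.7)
The plan is to show two things in turn: that $\rk_0$ and $\rk_1$ act by scalars $c_0,c_1\in\C$, and that those scalars are integers.

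For the first part, I would use that $\rk_0,\rk_1\in\wt\fh$, so on each weight space $V_\lambda$ they act as the scalars $\lambda(\rk_0),\lambda(\rk_1)$. Since $\al(\rk_i)=0$ for every $\al\in\wt\Delta$ (by construction of the extended root system), any non-zero root vector sends $V_\lambda$ into $V_{\lambda+\al}$ without changing the value of $\lambda(\rk_i)$. The set of weights supporting $V$ is connected in this sense because $V$ is irreducible, so $\lambda(\rk_i)$ takes one common value $c_i$ across all $\lambda$ occurring in $V$. This yields $(c_0,c_1)\in\C^2$.

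For the integrality, I would build suitable $\mathfrak{sl}_2$-triples inside $\wt\fg$ whose Cartan element pairs $c_0$ or $c_1$ with something already known to be integral. Fix a long root $\al\in\dot\Delta$ (so $\<\al,\al\>=2$) and choose $e_\al,f_\al\in\dg_{\pm\al}$ normalized so that $[e_\al,f_\al]=h_\al:=\al^\vee$ and $\<e_\al,f_\al\>=1$. Then for any $\bm{m}=(m_0,m_1)\in\Z^2$ the bracket relation in $\wt\fg_c$ gives
\[
[e_\al\ot t^{\bm{m}},\,f_\al\ot t^{-\bm{m}}]=h_\al+m_0\rk_0+m_1\rk_1,
\]
and $(e_\al\ot t^{\bm{m}},\,f_\al\ot t^{-\bm{m}},\,h_\al+m_0\rk_0+m_1\rk_1)$ is an $\mathfrak{sl}_2$-triple. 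The roots $\pm(\al+m_0\delta_0+m_1\delta_1)$ have square length $\<\al,\al\>=2\ne 0$, so they lie in $\wt\Delta^\times$, which means the two root vectors act locally nilpotently on $V$ by integrability. Standard $\mathfrak{sl}_2$-theory then forces the Cartan eigenvalue $\lambda(h_\al)+m_0c_0+m_1c_1$ on any weight vector $v\in V_\lambda$ to be an integer.

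Applying this with $\bm{m}=(0,0)$ gives $\lambda(h_\al)\in\Z$, and applying it with $\bm{m}=(1,0)$ and $\bm{m}=(0,1)$ and subtracting yields $c_0,c_1\in\Z$, as required. No real obstacle is expected: the only small subtleties are to verify that $\wt\fg_c$ actually contains the asserted $\mathfrak{sl}_2$-triple (immediate from the explicit bracket in Subsection 2.1) and that the corresponding loop roots are non-isotropic (immediate from the form on $\wt\fh^*$), both of which follow directly from the setup in Section 2.
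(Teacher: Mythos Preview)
Your argument is correct and is exactly the standard verification the paper has in mind when it declares this lemma ``obvious'' without proof: $\rk_0,\rk_1$ lie in $\wt\fh$ and are central (note $[\rd_{\bm m},\rk_j]=(m_0\delta_{j,1}-m_1\delta_{j,0})\sum_a m_a t^{\bm m}\rk_a=0$ in $\mathcal K$), so they act by scalars on an irreducible weight module, and the $\mathfrak{sl}_2$-triples $\{t^{\bm m}\!\otimes e_\al,\ t^{-\bm m}\!\otimes f_\al,\ \al^\vee+m_0\rk_0+m_1\rk_1\}$ attached to non-isotropic roots force those scalars into $\Z$. There is nothing to add.
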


We recall that $\wt\fg_c$ is the core of $\wt\fg$, namely, the subalgebra of $\wt\fg$ generated by non-isotropic root vectors.
As in the Introduction, let $\wt{\mathcal{S}}$ be the quotient algebra of $\wt\fg$ obtained by modulo its core   $\wt\fg_c$, and let
 $\pi:\wt\fg\rightarrow \wt\CS$ be the quotient map. Set
 \[\pi(\rd_i)=\rd_i,\quad  \pi(\rd_{\bm{m}})=\rd(\bm{m}),\]
 for $ i=1,2$ and $\bm{m}\in \Z^2$.
Then the elements $\rd_i, i=0,1$ and $\rd(\bm{m})$, $\bm{m}\in \Z^2\setminus\{\bm{0}\}$ $(\bm{0}=(0,0)$) form a basis of $\wt{\mathcal{S}}$, and the Lie relations are given by
\begin{align}\label{wtcs}[\rd_i,\rd_j]=0,\quad [\rd_i,\rd(\bm{m})]=m_i\rd(\bm{m}),\quad [\rd(\bm{m}),\rd(\bm{n})]=\detmn \rd(\bm{m}+\bm{n}),\end{align}
where $i,j=0,1$, $\bm{m},\bm{n}\in \Z^2$.
The derived subalgebra of $\wt{\mathcal{S}}$ is often called the centerless Virasoro-like algebra (see \cite{LT} and the references therein).
An $\wt\CS$-module is called a weight module if it admits a weight space decomposition relative to $\C\rd_0\oplus \C\rd_1$,
and a weight $\wt\CS$-module is called quasi-finite if
its weight spaces are all finite dimensional.

We denote by $\vartheta_{\mathrm{fin}}$ the category of integrable $\wt\fg$-modules
with finite dimensional weight spaces, and denote by $\vartheta_{\mathrm{fin}}^\times$ and $\vartheta_{\mathrm{fin}}^0$ respectively the subcategories of $\vartheta_{\mathrm{fin}}$ whose objects
have non-trivial and trivial $\wt\fg_c$-action.
Let $V$ be a weight $\wt\fg$-module such that $\wt\fg_c.V=0$. Then it is integrable and admits a natural  weight $\wt\CS$-module
structure.
Conversely, via the quotient map $\pi$, any weight $\wt\CS$-module $V$ is an integrable $\wt\fg$-module  such that $\wt\fg_c.V=0$.
In particular, this gives that the category $\vartheta_{\mathrm{fin}}^0$ is isomorphic to the category of quasi-finite weight $\wt\CS$-modules.

In recent years, the classification problem on irreducible quasi-finite weight $\wt\CS$-modules
has been investigated in several papers, see \cite{BT,GL,JL,LT,LS} for details.
As mentioned in the Introduction, the main goal of this paper is to classify the irreducible objects in the category  $\vartheta_{\mathrm{fin}}^\times$.
In particular, we present a classification of the irreducible objects in $\vartheta_{\mathrm{fin}}$ with non-zero central charges.

\section{Classification of irreducible integrable $\wt\fg$-modules}
In this section we state the main results of the paper.
\subsection{Integrable highest weight $\wt\fg$-modules of type I}
In this subsection we introduce a notion of highest weight module for the algebra $\wt\fg$,
and state the sufficient and necessary condition for  such an irreducible highest  weight $\wt\fg$-module belongs to the category
$\vartheta_{\mathrm{fin}}^\times$.

Let  $\dot\Pi=\{\al_1,\cdots,\al_\ell\}$ be a fixed simple root system of $\dg$. Denote by $\dot\Delta_+$ and $\dot\Delta_-$ respectively the corresponding positive and negative root systems of $\dg$. Then we have a decomposition
\[\wt\Delta=\wt\Delta^+\cup \wt\Delta^0\cup \wt\Delta^-\]
of $\wt\Delta$, where
\[\wt\Delta^\pm=\{\al+m_0\delta_0+m_1\delta_1\mid \al\in \dot\Delta_\pm, m_0,m_1\in \Z\}.\]
 This gives a triangular decomposition
 \begin{align}\label{tridec1}\wt\fg=\wt\fg^+\oplus \wt\fg^0\oplus \wt\fg^-\end{align}
for the  nullity $2$ toroidal EALA $\wt\fg$, where  $\wt\fg^\pm=\oplus_{\al\in \wt\Delta^\pm}\wt\fg_\al$ and
\[\wt\fg^0=\oplus_{\al\in \wt\Delta^0}\wt\fg_\al=\(\mathcal R\ot \dfh\)\oplus \mathcal K\oplus \mathcal S.\]

We say that a $\wt\fg^0$-module is a level $0$ weight module if $\rd_0, \rd_1$ act semi-simple and $\rk_0, \rk_1$ act trivially.
By using the decomposition \eqref{tridec1}, we define the following notion of highest weight module for $\wt\fg$.

\begin{dfnt}\label{defhw1} A $\wt\fg$-module $V$ is called a  highest weight module of type I
 if there exists a non-zero vector $v$ in $V$ such that
\begin{enumerate}
\item[(1)]\ $V$ is generated by $v$;
\item[(2)]\ $\wt\fg^+.v=0$;
\item[(3)]\  $\U(\wt\fg^0).v$ is an irreducible level $0$ weight $\wt\fg^0$-module.
\end{enumerate}
\end{dfnt}

Given an irreducible level $0$ weight $\wt\fg^0$-module $T$.
We extend $T$ to be a $(\wt\fg^+\oplus \wt\fg^0)$-module by letting $\wt\fg^+$ acts trivially on it.
Form the induced $\wt\fg$-module
\[ M(T)=\U(\wt\fg)\ot_{\U(\wt\fg^+\oplus \wt\fg^0)} T.\]
We denote by $V(T)$ the unique irreducible quotient of the $\wt\fg$-module $M(T)$.
If $V$ is an irreducible highest weight $\wt\fg$-module of type I, then
\[T=\{v\in V\mid \wt\fg^+.v=0\}\]
is an irreducible level $0$ weight $\wt\fg^0$-module and
$V$ is isomorphic to $V(T)$.

Now we are going to construct a class of irreducible level $0$ weight $\wt\fg^0$-modules.
We denote by
\[\dot{P}_+=\{\lambda\in \dfh^*\mid \lambda(\al_i^\vee)\in \N,\ i=1,\cdots,\ell\}\]
 the set of dominant integral weights of $\dg$.
Let $U$ be an irreducible finite dimensional $\mathfrak{sl}_2$-module, $\lambda\in \dot{P}_+\setminus\{0\}$ and $\bm{\gamma}=(\gamma_1,\gamma_2), \bm{\gamma}'=(\gamma_1',\gamma_2')\in \C^2$.
We define a $\wt\fg^0$-module $T_{U,\lambda,\bm{\gamma},\bm{\gamma}'}$ associated to the quadruple $(U,\lambda,\bm{\gamma},\bm{\gamma}')$ as follows.
The underlying space of  $T_{U,\lambda,\bm{\gamma},\bm{\gamma}'}$ is the tensor product space $\CR\ot U=\C[t_0^{\pm 1}, t_1^{\pm 1}]\ot U$  and the  actions are given by
\begin{align*}
t^{\bm{m}}\ot h. t^{\bm{n}}\ot u &=\lambda(h)\,t^{\bm{m}+\bm{n}}\ot u,\\
t^{\bm{m}}\rk_i. t^{\bm{n}}\ot u &=0,\\
 \rd_i.t^{\bm{n}}\ot u &=(n_i+\gamma'_i)\,t^{\bm{n}}\ot u,\\
\rd_{\bm{m}}.t^{\bm{n}}\ot u &=t^{\bm{m}+\bm{n}}\ot \(\(
                             \begin{array}{cc}
                              -m_0m_1 & m_0^2\\
                               -m_1^2 & m_0m_1 \\
                             \end{array}
                           \)
+\det {\bm{m}\choose \bm{\gamma}+\bm{n}}\).u,
\end{align*}
where $\bm{m},\bm{n}\in \Z^2$, $u\in U$, $h\in \dfh$ and $i=0,1$.
It is obvious that $T_{U,\lambda,\bm{\gamma},\bm{\gamma}'}$ is an irreducible level $0$ weight $\wt\fg^0$-module.
The following classification result will be proved in Section \ref{section4}.

\begin{thm}\label{mainhw1} Let $U$ be an irreducible  finite dimensional $\mathfrak{sl}_2$-module, $\dot\lambda\in \dot{P}_+\setminus\{0\}$ and
$\bm{\gamma},\bm{\gamma}'\in \C^2$.
Then the irreducible highest weight $\wt\fg$-module $V(T_{U,\lambda,\bm{\gamma},\bm{\gamma}'})\in \vartheta_{\mathrm{fin}}^\times$.
Conversely, if $V\in \vartheta_{\mathrm{fin}}^\times$ is an irreducible  highest weight $\wt\fg$-module of type I, then
$V$ has the form $V(T_{U,\lambda,\bm{\gamma},\bm{\gamma}'})$.
\end{thm}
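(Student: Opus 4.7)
The plan is to handle the two implications separately, using the structural reductions outlined in the Introduction.

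For the existence direction, I would first verify that the formulas defining $T_{U,\lambda,\bm{\gamma},\bm{\gamma}'}$ on $\mathcal R\otimes U$ really give a $\wt\fg^0$-module structure; this is a direct check of the bracket relations for $\wt\fg^0=(\mathcal R\ot\dfh)\oplus \mathcal K\oplus \mathcal S$, using that $\mathcal K$ acts trivially and the matrix cocycle $\bigl(\begin{smallmatrix}-m_0m_1 & m_0^2\\-m_1^2 & m_0m_1\end{smallmatrix}\bigr)$ is the standard $2$-cocycle for the skew-derivation algebra acting on a Heisenberg quotient. Irreducibility as a level $0$ weight $\wt\fg^0$-module follows from the fact that $\lambda\neq 0$ separates weights of $\dfh$ and the $\mathcal S$-action on $\C[t_0^{\pm 1},t_1^{\pm 1}]\ot U$ is transitive. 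Then $V(T_{U,\lambda,\bm{\gamma},\bm{\gamma}'})\in \vartheta_{\mathrm{fin}}^\times$ requires (i) non-triviality of the $\wt\fg_c$-action, which is immediate since $\mathcal R\ot \dfh\subseteq \wt\fg_c$ acts nontrivially because $\lambda\neq 0$; (ii) integrability, which I would prove by realizing $V(T_{U,\lambda,\bm{\gamma},\bm{\gamma}'})$ explicitly via a tensor product of a finite-dimensional $\dg$-module of highest weight $\lambda$ with the natural loop-module construction of \cite{CLT}, where integrability and local nilpotence of non-isotropic root vectors are built into the construction; and (iii) finite-dimensionality of weight spaces, which follows by PBW and the triangular decomposition \eqref{tridec1} once $T_{U,\lambda,\bm{\gamma},\bm{\gamma}'}$ is shown to be quasi-finite.

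For the converse direction, let $V=V(T)\in \vartheta_{\mathrm{fin}}^\times$ be an irreducible highest weight $\wt\fg$-module of type I. Since the non-zero central charge case is subsumed by Theorem \ref{mainhw2}, I would assume the central charge is $(0,0)$. Now I would invoke the result of Subsection 4.2 (proved via the Chari-Pressley lemma on extended loop algebras) to conclude that the whole space $\mathcal K$ acts trivially on $V$, and hence on $T$. Consequently $T$ descends to an irreducible level $0$ weight module for the semidirect product $(\mathcal R\ot\dfh)\rtimes \wt\CS$. The next task is to show that $T$ is \emph{uniformly bounded}: the finite-dimensionality of the weight spaces of $V$, together with a free-action argument using negative non-isotropic root vectors acting on $T$ to generate linearly independent elements of $V$, bounds the multiplicities of $T$ uniformly in the $\rd_0,\rd_1$-weight. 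At this point the classification of irreducible uniformly bounded $(\mathcal R\ot\dfh)\rtimes \wt\CS$-modules carried out in Subsection 4.3 (adapting the Gao--Liu techniques) identifies $T$ with some $T_{U,\lambda,\bm{\gamma},\bm{\gamma}'}$, and we conclude $V\cong V(T_{U,\lambda,\bm{\gamma},\bm{\gamma}'})$.

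The main obstacle will be the classification of irreducible uniformly bounded $(\mathcal R\ot\dfh)\rtimes \wt\CS$-modules, since the Gao--Liu framework for divergence-zero derivations must be extended to allow the abelian current algebra $\mathcal R\ot\dfh$ alongside $\wt\CS$; in particular one has to rule out other possible module structures (e.g.\ twisted or AHW-type modules) and pin down the precise form of the $\mathcal S$-action including the matrix cocycle term. A secondary technical point is the uniform-boundedness step, where propagating the bound from $V$ to $T$ requires showing that sufficiently many negative root vectors act injectively, which uses a Garland-type calculation with the $\mathfrak{sl}_2$-triples associated to non-isotropic roots $\al+m_0\delta_0+m_1\delta_1$ combined with the explicit action of $\mathcal S$ on $T$.
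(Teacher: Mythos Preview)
Your overall architecture matches the paper: realize $V(T_{U,\lambda,\bm{\gamma},\bm{\gamma}'})$ explicitly to get the existence direction, then for the converse show $\mathcal K.V(T)=0$, reduce $T$ to a uniformly bounded $(\mathcal R\ot\dfh)\rtimes\wt\CS$-module, and apply the Subsection~4.3 classification. Two points deserve correction.

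\textbf{Uniform boundedness.} Your proposed argument---use negative non-isotropic root vectors to send $T(\bm m)$ injectively into weight spaces of $V$---does not yield a \emph{uniform} bound as stated: a root vector $t^{\bm r}\ot x_\al^-$ sends $T(\bm m)$ into the $(\lambda-\al,\bm m+\bm r)$-weight space of $V$, whose dimension still depends on $\bm m$. The paper instead works entirely inside $T$. First (Lemma~\ref{lem:notexist} and Lemma~\ref{ghw}) it shows, via Lemma~\ref{eloop1}(iii) applied to the copies $\wt{\mathfrak{sl}}_2(\al,\bm m,0)$, that no non-zero $v_0\in T$ is simultaneously killed by $t^{\bm n}\ot\al_i^\vee$ and $\rd_{\bm m},\rd_{\bm n}$ for some $\bm m,\bm n$ with $\det\binom{\bm m}{\bm n}\ne 0$. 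Then (Lemma~\ref{ghwub}) one chooses, for arbitrary $\bm m$, the $\ell+2$ operators $\rd((-m_0,1)),\ \rd((1-m_0,1)),\ \al_i^\vee((-m_0,1))$; their common kernel on $T(\bm m)$ is zero, so $\dim T(\bm m)$ is bounded by a sum of dimensions of weight spaces $T((0,m_1+1))$ and $T((1,m_1+1))$ that depend only on $m_1$. A second application kills the $m_1$-dependence. This two-step ``shear'' trick using skew derivations in $\wt\fg^0$ is the substitute for the distinguished derivations that are absent from $\CS$, and is the part of Jiang--Meng's method that genuinely has to be redone.

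\textbf{The extra parameter.} Proposition~\ref{ubm} outputs $T_{U,\tilde\lambda,\tilde\lambda',\bm\gamma,\bm\gamma'}$ with an additional $\tilde\lambda'\in\dfh^*$ recording a possible shift of the $\dfh$-action at degree $\bm 0$. To land on $T_{U,\lambda,\bm\gamma,\bm\gamma'}$ one must show $\tilde\lambda=\lambda$ (hence $\tilde\lambda'=0$). The paper does this in the last paragraph of Subsection~4.4 by computing the Garland generating series $\sum_b\Lambda_b z^b=(1-t_0z)^{\tilde\lambda(\al^\vee)}$ on $\C[t_0^{\pm1}]\ot U\subset T$ and comparing with the vanishing $\Lambda_b=0$ for $b>\lambda(\al^\vee)$ forced by integrability (Lemma~\ref{eloop1}(i),(ii)); this pins down $\tilde\lambda(\al^\vee)=\lambda(\al^\vee)$ for every $\al$. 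You should include this step explicitly. (Incidentally, the explicit realization in the existence direction is simply $\mathcal R\ot U\ot V_{\dg}(\lambda)$ with the evident actions; \cite{CLT} is used only for the non-zero central charge case.)
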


\subsection{Integrable highest weight $\wt\fg$-modules of type II}
In this subsection we introduce another kind of highest weight module for the  Lie algebra $\wt\fg$,
and also state the sufficient and necessary condition for  such an irreducible highest  weight $\wt\fg$-module belongs to the category $\vartheta_{\mathrm{fin}}^\times$.

We denote by
\[\fg=\(\C[t_0,t_0^{-1}]\ot \dg\)\oplus \C\rk_0\oplus \C\rd_0,\]
which is an affine subalgebra of $\wt\fg$.
Let
 $\fh=\dfh\oplus \C\rk_0\oplus \C\rd_0$ be the usual Cartan subalgebra of $\fg$. Then the set
\[\Delta=\{\al+m_0\delta_0\mid \al\in \dot\Delta,\ m_0\in \Z\}\]
is the root system of $\fg$ relative to the Cartan subalgebra $\fh$.
And the set  $\Pi=\{\al_0=\delta_0-\theta,\,\al_i\mid i=1,\cdots,\ell\}$ is a simple root system of $\fg$, where
 $\theta$ indicates the highest root in $\dot{\Delta}$.
Denote by $\Delta_+$ and $\Delta_-$ respectively the corresponding positive and negative root systems of $\fg$.
Then we have another decomposition
\[\wt\Delta=\wt\Delta_+\cup \wt\Delta_0\cup \wt\Delta_-\]
of $\wt\Delta$, where
\[\wt\Delta_\pm=\{\al+m_1\delta_1\mid \al\in \Delta_\pm, m_1\in \Z\}\ \te
 {and}\
 \wt\Delta_0=\{m_1\delta_1\mid m_1\in \Z\}.\]
As in the previous subsection, this decomposition of $\wt\Delta$ induces a triangular decomposition
\begin{align}\label{tridec2}\wt\fg=\wt\fg_+\oplus \wt{\mathcal H}\oplus \wt\fg_-\end{align}
for the  nullity $2$ toroidal EALA $\wt\fg$, where  $\wt\fg_\pm=\oplus_{\al\in \wt\Delta_\pm}\wt\fg_\al$, and
\begin{align}\label{ch}
\wt{\mathcal H}=\oplus_{\al\in \wt\Delta_0}\wt\fg_\al=\(\C[t_1,t_1^{-1}]\ot \dfh\)\oplus \sum_{n\in \Z}\(\C t_1^n\rk_0\oplus \C t_1^n\rd_0\)\oplus \C\rk_1\oplus \C\rd_1.\end{align}

We say that an $\wt\CH$-module is a level $0$ weight module if $\rd_1$ acts semi-simple and $\rk_1$ acts trivially.
Similar to Definition \ref{defhw1}, by using the decomposition \eqref{tridec2} of $\wt\fg$, we have the following definition.

\begin{dfnt}\label{defhw2} A $\wt\fg$-module $V$ is called a highest weight module of type II if there exists a non-zero vector $v$ in $V$ such that
\begin{enumerate}
\item[(1)]\ $V$ is generated by $v$;
\item[(2)]\ $\wt\fg_+.v=0$;
\item[(3)]\  $\U(\wt\CH).v$ is an irreducible level $0$ weight $\wt\CH$-module.
\end{enumerate}
\end{dfnt}


Note that the algebra $\wt\fg$ is $\Z$-graded with respect to the action of $\rd_1$. Namely,
\begin{align}\label{zgraded}
\wt\fg=\oplus_{n\in \Z}\wt\fg_n,\end{align}
where $\wt\fg_n=\{x\in \wt\fg\mid [\rd_1,x]=nx\}$. For any graded subalgebra $\mathcal L$ of $\wt\fg$,
the corresponding homogenous subspaces will be denoted  by $\mathcal L_n$ for $n\in \Z$.

We denote by
\begin{align}\label{hatch}
\wh\CH=\(\C[t_1,t_1^{-1}]\ot \dfh\)\oplus \sum_{n\in \Z}\(\C t_1^n\rk_0\oplus \C t_1^n\rd_0\)\oplus \C\rk_1,\end{align}
which is a Heisenberg subalgebra of $\wt\fg$ and $\wt\CH=\wh\CH\oplus \C\rd_1$.
Let $\psi$ be a linear functional on  $\wh\CH$ with $\psi(\rk_1)=0$.
Using the functional $\psi$, we can define an $\wt\CH$-module structure on the Laurent polynomial ring $\C[t_1,t_1^{-1}]$ with  action given by
\[ h.t_1^m=\psi(h)t_1^{m+n},\quad \rd_1.t_1^m=m\,t_1^m,\]
for $m,n\in \Z$ and $h\in \wh\CH_n$.
The resulting $\wt\CH$-module  is a level $0$ weight $\wt\CH$-module and will be denoted by $L(\psi)$.
We write
$L_{\psi}=\U(\wt\CH).1$ for
the $\wt\CH$-submodule of $L(\psi)$ generated by $1$.
For any $b\in\C$, one can modify the action of $\rd_1$ on
$L_{\psi}$ by adding the scalar action $b\, \mathrm{Id}$. The resulting $\wt\CH$-module
will be denoted by $L_{\psi,b}$.

 For any non-negative integer $r$, set
 \begin{equation*}L_r=\begin{cases}
\C[t_1^r,t_1^{-r}],\quad &\te{if}\quad r>0;\\
\C1,\quad&\te{if}\quad r=0.\end{cases}\end{equation*}
 The following results are well-known (\cite{C}).
\begin{lemt}\label{lem:hmod} (i). Let $\psi$ be a linear functional on $\wh\CH$ with $\psi(\rk_1)=0$, and $b\in \C$.
Then the $\wt\CH$-module $L_{\psi,b}$ is irreducible if and only if as vector spaces, $L_{\psi}=L_r$ for some $r\in \N$.
Furthermore, if $L_{\psi}=L_r$ for some $r>0$, then the $\wt\CH$-module $L(\psi)$ is completely reducible
\[L(\psi)=\bigoplus_{i=0}^{r-1}\U(\wt\CH).t_1^i\] and each irreducible component $\U(\wt\CH).t_1^i=t_1^iL_r$ is isomorphic to $L_{\psi,i}$.

(ii).  Any irreducible level $0$ weight $\wt\CH$-module has the form $L_{\psi,b}$.
\end{lemt}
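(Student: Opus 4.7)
The plan hinges on recognizing that $\wh\CH$ is a Heisenberg Lie algebra with one-dimensional center $\C\rk_1$: a direct computation from the defining brackets of $\wt\fg$ yields
\[
[t_1^m\ot h,\, t_1^n\ot h']=m\la h,h'\ra\delta_{m+n,0}\rk_1,\qquad [t_1^m\rk_0,\, t_1^n\rd_0]=-m\delta_{m+n,0}\rk_1,
\]
for $h,h'\in\dfh$, while all other brackets among the displayed generators of $\wh\CH$ vanish. Because $\psi(\rk_1)=0$ and $\rk_1$ acts trivially on any level-zero module, the $\wh\CH$-action factors through the abelian quotient $\wh\CH/\C\rk_1$, and $\rd_1$ then provides a $\Z$-grading via $[\rd_1,h]=nh$ for $h\in\wh\CH_n$. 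The lemma thereby reduces to the representation theory of a commutative $\Z$-graded algebra equipped with its grading derivation.

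For part (i), let $\Sigma_\psi=\{n\in\Z:\psi|_{\wh\CH_n}\neq 0\}$ and let $M_\psi\subseteq\Z$ be the submonoid it generates (containing $0$). Unwinding the construction of $L_\psi=\U(\wt\CH).1$ gives $L_\psi=\bigoplus_{N\in M_\psi}\C t_1^N$ as vector spaces, so the identification $L_\psi=L_r$ is equivalent to $M_\psi=r\Z$. Assuming this with $r>0$, the $\rd_1$-weights $\{b+rn\}_{n\in\Z}$ on $L_{\psi,b}$ are distinct with one-dimensional weight spaces; any nonzero submodule is $\rd_1$-graded, hence contains some $t_1^{rn_0}$, and iterated actions of elements $h\in\wh\CH_k$ with $k\in\Sigma_\psi$ and $\psi(h)\neq 0$ translate it to every $t_1^{rn}$ because $\Sigma_\psi$ generates $r\Z$; the case $r=0$ is trivial. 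Conversely, any submonoid of $\Z$ containing both positive and negative elements is a subgroup by a short B\'ezout argument, so if $M_\psi$ is not of the form $r\Z$, then up to sign $M_\psi\subseteq\N$, and $\bigoplus_{N\in M_\psi,\,N>0}\C t_1^N$ is a proper nonzero $\wt\CH$-submodule of $L_{\psi,b}$. The decomposition $L(\psi)=\bigoplus_{i=0}^{r-1}t_1^iL_r$ when $L_\psi=L_r$ with $r>0$ is then immediate: each summand is $\wt\CH$-stable because the $\wh\CH$-action shifts indices by $\Sigma_\psi\subseteq r\Z$, and the shift $t_1^{rn+i}\mapsto t_1^{rn}$ identifies it with $L_{\psi,i}$.

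For part (ii), let $V$ be an irreducible level-zero weight $\wt\CH$-module and fix $b$ with $V_b\neq 0$; set $A=S(\wh\CH/\C\rk_1)_0$, the degree-zero subalgebra of the commutative algebra through which $\wh\CH$ acts. Any $A$-submodule $W\subseteq V_b$ generates the $\wt\CH$-submodule $S(\wh\CH/\C\rk_1).W$, whose weight-$b$ component equals $A.W=W$; a proper $W$ thus produces a proper $\wt\CH$-submodule, contradicting irreducibility. Hence $V_b$ is irreducible as an $A$-module, and since $V_b$ is finite-dimensional, Schur's lemma forces $A$ to act by a character and $\dim V_b=1$; the same argument gives $\dim V_{b+n}\le 1$ for every $n$. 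Fixing a nonzero $v_0\in V_b$ and compatibly normalized bases $v_n$ of $V_{b+n}$ when nonzero, the recipe $h\cdot v_0=\psi(h)v_n$ for $h\in\wh\CH_n$, together with $\psi(\rk_1)=0$, defines a linear functional $\psi:\wh\CH\to\C$, and the map $L(\psi)\to V$, $t_1^n\mapsto v_n$, is a $\wt\CH$-module homomorphism whose image is cyclic on $v_0$. By irreducibility this image equals $V$, and by part (i) it coincides with $L_{\psi,b}$.

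The main obstacle is the one-dimensionality of the weight spaces of $V$ in part (ii), since $V$ is assumed irreducible only as a $\wt\CH$-module and not as a $\wh\CH$-module. The crux is the bijection between $A$-submodules of $V_b$ and $\wt\CH$-submodules of $V$ given by $W\mapsto S(\wh\CH/\C\rk_1).W$, whose recovery of $W$ as the weight-$b$ component of the generated submodule relies on both the commutativity of the $\wh\CH$-action (a consequence of $\rk_1=0$) and the weight-grading. A subsidiary technicality is the compatible normalization of the bases $\{v_n\}$ so that $h\cdot v_m=\psi(h)v_{m+n}$ holds for all $m$, not only for $m=0$; this consistency is forced by the commutativity of the $\wh\CH$-operators acting on $V$.
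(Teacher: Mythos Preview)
The paper does not give a proof of this lemma; it records the result as ``well-known'' and cites \cite{C}. Your argument for part (i) is correct, and the strategy for part (ii) is sound, but there is one genuine gap.

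In part (ii) you assert ``since $V_b$ is finite-dimensional.'' The paper's definition of a level-$0$ weight $\wt\CH$-module requires only that $\rd_1$ act semisimply and $\rk_1$ trivially; no finite-dimensionality of weight spaces is assumed. Your preceding argument correctly shows that $V_b$ is irreducible over the commutative algebra $A=S(\wh\CH/\C\rk_1)_0$, but an irreducible module over a commutative $\C$-algebra need not be one-dimensional in general (e.g.\ $\C(x)$ over $\C[x]$). The standard repair is Dixmier's form of Schur's lemma: since $V$ is irreducible over the countable-dimensional Lie algebra $\wt\CH$, the module $V$ and hence $V_b$ have at most countable dimension over the uncountable field $\C$, so $\End_A(V_b)=\C$; commutativity of $A$ then forces every element of $A$ to act as a scalar, and $\dim V_b=1$ follows from irreducibility.

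The ``subsidiary technicality'' you flag also deserves one more line of justification, since compatibility of the $v_n$ is exactly what distinguishes an arbitrary graded-simple module from one of the form $L_{\psi,b}$. Once each weight space is one-dimensional and the support is $r\Z$, choose $s\in S(\wh\CH/\C\rk_1)_r$ and $s'\in S(\wh\CH/\C\rk_1)_{-r}$ with $s's.v_0=v_0$ (possible by graded-simplicity). Because $A$ acts by scalars on all of $V$, the degree-zero element $s's$ acts as the identity everywhere, so $s$ is an invertible operator on $V$; setting $v_{rn}:=s^n.v_0$ then gives the required compatible basis, and commutativity of the $\wh\CH$-action yields $h.v_m=\psi(h)\,v_{m+n}$ uniformly in $m$.
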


We denote by $\CE$ the set of linear functionals $\psi$ on $\wh\CH$ such that $\psi(\rk_1)=0$ and that the associated
$\wt\CH$-module $L_{\psi}$ is irreducible.
For any $\psi\in \CE$ and $b\in \C$, we extend the  $\wt\CH$-module $L_{\psi,b}$
to be a $(\wt\fg_+\oplus \wt\CH)$-module by letting $\wt\fg_+$ acts trivially on it.
Then we have the induced $\wt\fg$-module
\[ \widetilde{M}(\psi,b)=\U(\wt\fg)\ot_{\U(\wt\fg_+\oplus \wt\CH)} L_{\psi,b}.\]
Let $\widetilde{V}(\psi,b)$ stand for the unique irreducible quotient of the $\wt\fg$-module $\widetilde{M}(\psi,b)$.
Suppose now that $V$ is a highest weight $\wt\fg$-module of type II. Then, by Lemma \ref{lem:hmod} (ii), $V$ is a quotient of $\widetilde{M}(\psi,b)$ for some $\psi\in\CE$ and $b\in \C$. Furthermore,
if $V$ is also irreducible, then $V$ must be isomorphic to $\widetilde{V}(\psi,b)$.

Denote by
\[P_+=\{\lambda\in \fh^*\mid \lambda(\al_i^\vee)\in \N,\ i=0,1,\cdots,\ell\}\]
the set of dominant integral weights of $\fg$, where $\al_0^\vee=\rk_0-\theta^\vee$.
Recall from \cite{BZ} that a function $f:\Z\rightarrow \C$ is called
exp-polynomial if there exist finitely many $c_1,\cdots,c_p,b_1,\cdots,b_p\in \C^\times$ and $m_1,\cdots,m_p\in \N$ such that
\[f(n)=\sum_{i=1}^p c_i\, n^{m_i}\, b_i^{n}\]
for $n\in \Z$.

Let $k$ be a fixed positive integer, and let $(\bm{\lambda},\bm{a},\varphi)$ be a triple which satisfies the following conditions
\begin{enumerate}
\item[(c1)] $\bm{\lambda}=(\lambda_1,\cdots,\lambda_k)\in (P_+)^k$ with $\lambda_i\ne 0$ for all $i$;
\item[(c2)] $\bm{a}=(a_1,\cdots,a_k)\in (\C^\times)^k$ with $a_1,\cdots,a_k$  distinct;
\item[(c3)] $\varphi:\Z\rightarrow \C$ is an exp-polynomial function such that $\varphi(0)=0$.
\end{enumerate}
We now define a linear functional
$\psi_{\bm{\lambda},\bm{a},\varphi}:\wh\CH\rightarrow \C$ associated to the triple $(\bm{\lambda},\bm{a},\varphi)$ as follows
\begin{align*}
\psi_{\bm{\lambda},\bm{a},\varphi}(t_1^n\ot h)=\sum_{i=1}^k\lambda_i(h)a_i^n,\quad
\psi_{\bm{\lambda},\bm{a},\varphi}(t_1^n\rk_0)=\sum_{i=1}^k\lambda_i(\rk_0)a_i^n,\\
\psi_{\bm{\lambda},\bm{a},\varphi}(t_1^m\rd_0)=\frac{\varphi(m)}{m^2},\quad
\psi_{\bm{\lambda},\bm{a},\varphi}(\rk_1)=0,\quad \psi_{\bm{\lambda},\bm{a},\varphi}(\rd_0)=\sum_{i=1}^k\lambda_i(\rd_0),
\end{align*}
where $n\in \Z$, $h\in \dfh$ and $m\in \Z\setminus\{0\}$.
It is easy to see that the linear functional $\psi_{\bm{\lambda},\bm{a},\varphi}\in \CE$.
The following result will be proved in Section \ref{section5}.

%

\begin{thm}\label{mainhw2} Let $(\bm{\lambda},\bm{a},\varphi)$ be a triple which satisfies the conditions (c1)-(c3) and let $b\in \C$.
Then the irreducible highest weight $\wt\fg$-module $\widetilde{V}(\psi_{\bm{\lambda},\bm{a},\varphi},b)\in \vartheta_{\mathrm{fin}}^\times$. Conversely, if $V\in \vartheta_{\mathrm{fin}}^\times$ is an irreducible highest weight $\wt\fg$-module of type II, then it  has the form $\widetilde{V}(\psi_{\bm{\lambda},\bm{a},\varphi},b)$.
\end{thm}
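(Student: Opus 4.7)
The theorem has two directions: the construction and integrability of $\widetilde V(\psi_{\bm{\lambda},\bm{a},\varphi},b)$, and the structural classification of every irreducible type II highest weight module in $\vartheta_{\mathrm{fin}}^\times$. My plan is to settle the first direction by invoking the explicit construction of \cite{CLT}, and to handle the second by extracting a linear functional $\psi$ on $\wh\CH$ from an abstract highest weight module and pinning down its restrictions to each of the four summands of $\wh\CH$ separately.

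For sufficiency, the CLT construction—a loop generalization of Billig's work \cite{B}—produces an integrable highest weight $\wt\fg$-module from the data $(\bm{\lambda},\bm{a},\varphi)$ by combining a tensor product of integrable highest weight modules for the affine Kac-Moody algebra $\fg$ evaluated at the distinct points $a_1,\ldots,a_k$ with weights $\lambda_1,\ldots,\lambda_k$, together with a Fock-type module for a Heisenberg subalgebra twisted by the exp-polynomial $\varphi$. Finite dimensionality of weight spaces follows from the corresponding property for each affine tensor factor together with standard character estimates on the Heisenberg Fock module, and integrability is inherited from the affine factors (extended by $t_1$-translations). A direct computation of the $\wh\CH$-action on the distinguished highest weight vector identifies the associated functional with $\psi_{\bm{\lambda},\bm{a},\varphi}$, so the unique irreducible quotient is isomorphic to $\widetilde V(\psi_{\bm{\lambda},\bm{a},\varphi},b)$.

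For necessity, let $V\in\vartheta_{\mathrm{fin}}^\times$ be irreducible of type II with highest weight vector $v$; by Lemma \ref{lem:hmod}(ii) there exist $\psi\in\CE$ and $b\in\C$ with $V\cong\widetilde V(\psi,b)$. The loop subalgebra $\C[t_1^{\pm 1}]\ot\dg\subset\wt\fg$ acts on $V$, and $v$ generates an integrable quasi-finite highest weight module for its natural extension, so the Chari-Pressley result on extended loop algebras (Lemma \ref{eloop1}) forces $\psi(t_1^n\ot h)=\sum_{i=1}^k\lambda_i(h)\,a_i^n$ with distinct $a_i\in\C^\times$ and finitely many nonzero dominant weights $\lambda_i\in\dot{P}_+$. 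Evaluating the cocycle bracket $[t^{(-m_0,n)}\ot x_\alpha,\,t^{(m_0,0)}\ot x_{-\alpha}]$ on $v$ for suitable root vectors then yields $\psi(t_1^n\rk_0)=\sum c_i a_i^n$ with the same parameters $a_i$, and the integrability of each affine tensor factor of $V$ promotes each $\lambda_i$ to a dominant integral weight of $\fg$ with $c_i=\lambda_i(\rk_0)\in\N$.

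The genuinely delicate step, and the main obstacle distinguishing the nullity 2 case from the setting of \cite{E2}, is to show that $\varphi(m):=m^2\psi(t_1^m\rd_0)$ is an exp-polynomial function of $m$. I would establish this by exploiting the bracket relations for skew derivations. Applying $[\rd_{\bm{m}},t^{\bm{n}}\ot x]=\detmn\,t^{\bm{m}+\bm{n}}\ot x$ to $v$ and iterating, combined with the already-determined form of $\psi$ on $\C[t_1^{\pm 1}]\ot\dfh$ and $\C[t_1^{\pm 1}]\rk_0$, produces a family of linear relations among the values $\psi(t_1^m\rd_0)$. These translate into a linear recurrence with constant coefficients for $\varphi$, whose general solution space is exactly the space of exp-polynomial functions; the normalization $\varphi(0)=0$ is a harmless convention, since $\psi(\rd_0)$ is recorded separately as $\sum\lambda_i(\rd_0)$. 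Matching with the sufficiency construction yields $\psi=\psi_{\bm{\lambda},\bm{a},\varphi}$, and the scalar $b$ is recovered as the $\rd_1$-eigenvalue on $v$, completing the classification.
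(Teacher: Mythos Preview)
Your overall architecture matches the paper's, but two steps contain genuine gaps.

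\textbf{Sufficiency.} The CLT construction $\wh{V}(\bm{\lambda},\bm{a})$ is a tensor product of irreducible modules for the \emph{Virasoro--affine} algebra $\bar{\mathfrak f}$ (not just the affine algebra $\fg$), and it realizes $\psi_{\bm{\lambda},\bm{a}}$, not $\psi_{\bm{\lambda},\bm{a},\varphi}$. The paper absorbs the remaining functional by tensoring with an irreducible highest weight module $V_{\wh\CS}(\chi)$ for the \emph{Virasoro-like} algebra $\wh\CS$, where $\chi$ is a suitable exp-polynomial shift of $\varphi$; this is not a Heisenberg Fock module, and the fact that it has finite dimensional weight spaces is exactly the quasi-finiteness criterion of \cite{LS} (the exp-polynomial condition (c3) is needed precisely here). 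Also, CLT produces a $\wh\fg$-module, not a $\wt\fg$-module; the passage to $\wt{V}(\psi,b)$ goes through the loop construction and the equivalence $\wh{V}(\psi)\in\mathcal O_{\mathrm{fin}}^\times \Leftrightarrow \wt{V}(\psi,b)\in\vartheta_{\mathrm{fin}}^\times$ (Proposition~\ref{hwmeq}), which you do not mention.

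\textbf{Necessity: the exp-polynomial step.} The bracket $[\rd_{\bm{m}},t^{\bm{n}}\otimes x]=\detmn\, t^{\bm{m}+\bm{n}}\otimes x$ cannot by itself produce relations among the numbers $\psi(t_1^m\rd_0)$: for $\bm{m}=(0,m)$ this bracket vanishes on $\C[t_1^{\pm1}]\otimes\dfh$, and for other $\bm{m}$ the vector $\rd_{\bm{m}}.v$ leaves the highest weight space, so no eigenvalue equation results. The paper's mechanism is different and uses the finite dimensionality hypothesis in an essential way: the vectors $\rd_{(-1,i)}.v_\psi$ for $i\in\Z$ all lie in the single finite dimensional weight space of weight $\underline{\lambda}-\delta_0$, so some nonzero polynomial $P(t_1)=\sum p_i t_1^i$ satisfies $\sum_i p_i\,\rd_{(-1,m+i)}.v_\psi=0$. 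Acting by $\rd_{(1,s)}$ and using the bracket
\[
[\rd_{(1,s)},\rd_{(-1,m+i)}]=(m+i+s)\,\rd_{(0,m+i+s)}+\mu\,(m+i+s)^2\,t_1^{m+i+s}\rk_0\ \in\ \wh\CH
\]
then yields a constant-coefficient linear recurrence $\sum_i p_i\,\varphi'(m+i)=0$ for $\varphi'(m)=m\psi(\rd_{(0,m)})+\mu m^2\psi(t_1^m\rk_0)$, whence $\varphi'$ and therefore $\varphi$ is exp-polynomial. It is the $[\rd,\rd]$ bracket together with the finite weight-space dimension that does the work, not $[\rd,t\otimes x]$.

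A smaller point: for the values $\psi(t_1^n\rk_0)$ the paper does not argue via cocycle brackets as you suggest, but simply includes $i=0$ in the family of $\mathfrak{sl}_2$-subalgebras $\wh{\mathfrak{sl}}_2(\al_i)$, since $\al_0^\vee=\rk_0-\theta^\vee$; this treats $\dfh$ and $\rk_0$ uniformly via Lemma~\ref{eloop2} and immediately gives $\lambda_s\in P_+$.
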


\subsection{The main result}

In this subsection we state and prove the main result of our paper by using Theorem \ref{mainhw1} and Theorem \ref{mainhw2}.
We begin with the following result whose proof is standard, see \cite[Proposition 2.4]{E2} or \cite[Theorem 2.1]{JM} for example.
\begin{prpt}\label{redhw} Let $V\in \vartheta_{\mathrm{fin}}$ be  irreducible with central charge $(c_0,c_1)$.

(i) If $c_0=c_1=0$,   then there exists a non-zero weight vector $v\in V$ such that
$\wt\fg^+.v=0.$

(ii). If $c_0>0$ and $c_1=0$, then there exists a non-zero weight vector $v\in V$ such that
$\wt\fg_+.v=0.$
\end{prpt}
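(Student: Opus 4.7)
The plan is to emulate the arguments of \cite[Prop.~2.4]{E2} and \cite[Thm.~2.1]{JM}, which establish analogous reductions in closely related settings.

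For part (i), $V$ is integrable with respect to the simple subalgebra $1\ot\dg\subset\wt\fg$, and hence decomposes as a direct sum of finite dimensional irreducible $\dg$-modules. Let $\Omega\subset\dot P_+$ denote the set of $\dg$-highest weights that appear, equipped with the partial order $\dot\mu\leq\dot\lambda$ iff $\dot\lambda-\dot\mu$ is a non-negative integer combination of simple roots in $\dot\Pi$. If $\dot\lambda_0\in\Omega$ is maximal in this order, then any $\dg$-highest weight vector $v_0\in V$ of $\dfh$-weight $\dot\lambda_0$ will satisfy $\wt\fg^+.v_0=0$: indeed, $\wt\fg^+=\mathcal{R}\ot\bigoplus_{\al\in\dot\Delta_+}\dg_\al$, so if $(t^{\bm m}\ot x_\al).v_0\neq 0$ for some $\al\in\dot\Delta_+$ and $\bm m\in\Z^2$, then the $\dg$-submodule of $V$ generated by $(t^{\bm m}\ot x_\al).v_0$ would contain a $\dg$-highest weight vector of $\dfh$-weight $\geq\dot\lambda_0+\al>\dot\lambda_0$, contradicting the maximality of $\dot\lambda_0$.

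The main obstacle is thus to exhibit a maximal element of $\Omega$. The argument will rest on the finite dimensionality of the weight spaces of $V$, the grading by the eigenvalues of $\rd_0,\rd_1$, the action of the skew-derivation subalgebra $\mathcal{S}$ (which mixes that grading), and the irreducibility of $V$. Roughly, an infinite ascending chain $\dot\lambda_1<\dot\lambda_2<\cdots$ in $\Omega$ would force every $L(\dot\lambda_i)$ to contribute non-trivially to the $\dot\lambda_1$-weight space of $V$; combining the $\mathcal{S}$-action (which, by irreducibility of $V$, links distinct $(\rd_0,\rd_1)$-grades) with the finite dimensionality of the individual weight spaces will then yield a contradiction.

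For part (ii), the plan is to restrict $V$ to the affine Kac--Moody subalgebra $\fg=\C[t_0^{\pm 1}]\ot\dg\oplus\C\rk_0\oplus\C\rd_0$. The restriction is an integrable $\fg$-module of positive level $c_0$ with finite dimensional weight spaces, so the standard Sugawara construction bounds the $\rd_0$-eigenvalues on $V$ above by some $d_0^{\max}\in\C$. Let $V^{(\max)}\subset V$ denote the subspace on which $\rd_0$ acts by $d_0^{\max}$; every element of $\wt\fg_+$ of strictly positive $\rd_0$-degree annihilates $V^{(\max)}$, so it suffices to find $v_0\in V^{(\max)}$ killed by the $\rd_0$-degree-zero part of $\wt\fg_+$, which is spanned by $t_1^{m_1}\ot x_{\dot\al}$ for $\dot\al\in\dot\Delta_+$ and $m_1\in\Z$. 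Running the argument of (i) within $V^{(\max)}$---decomposing it as a $\dg$-module, choosing a maximal $\dg$-highest weight, and iterating---then supplies such a $v_0$.
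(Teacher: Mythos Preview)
The paper does not supply its own argument for this proposition: it simply calls the result standard and points to \cite[Proposition~2.4]{E2} and \cite[Theorem~2.1]{JM}. Your proposal is to emulate precisely those two proofs, so your approach coincides with the paper's by construction.
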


Let $A=(a_{ij})_{i,j=0,1}$ be a  matrix in $\mathrm{GL}_2(\Z)$.
As in \cite[Section 4]{E2}, there is an automorphism $\chi_A$ of $\wt\fg_c$ such that
\begin{align*}
\chi_A(t^{\bm{m}}\otimes x)=t^{\bm{m}A^t}\otimes x,\quad
\chi_A(t^{\bm{m}}\rk_j)=\sum_{i=0,1}a_{ij}t^{\bm{m}A^t}\rk_i,\end{align*}
  where $x\in\dot{\fg}$, $\bm{m}\in \Z^2$, $j=0,1$ and $A^t$ stands for the transpose of $A$.
Write $B=(b_{ij})_{i,j=0,1}$ for the inverse matrix of $A$. Then the map $\chi_A$ can be extended to an automorphism of $\wt\fg$, still call $\chi_A$,
such that
\begin{align*}
\chi_A(\rd_j)=\sum_{i=0,1}b_{ji}\rd_i,\quad
\chi_A(\rd_{\bm{m}})=(\det B)\,
\rd_{\bm{m}A^t},
\end{align*}
where $j=0,1$ and $\bm{m}\in \Z^2$.

For any $\wt\fg$-module $W$,
by  the automorphism $\chi_A$ twisting, one obtains another module structure on $W$. The resulting
$\wt\fg$-module will be
denoted by $W_A$. We say that
two $\wt\fg$-modules $W$ and $W'$ are isomorphism
 after a change of coordinates if $W'$ is isomorphic to $W_A$ for some $A\in \mathrm{GL}_2(\Z)$.

Now we are in a position to  prove the main result of this paper.

\begin{thm}\label{main} Let $V$ be an irreducible integrable $\wt\fg$-module with finite dimensional weight spaces.

(i). If $V$ has zero central charge and  $\wt\fg_c$ acts non-trivially on it, then $V$ is isomorphic to the type I highest
 weight $\wt\fg$-module $V(T_{U,\lambda,\bm{\gamma},\bm{\gamma}'})$,
 where $U$ is an irreducible finite dimensional $\mathfrak{sl}_2$-module, $\lambda\in \dot{P}_+\setminus \{0\}$ and
 $\bm{\gamma},\bm{\gamma}'\in \C^2$.

(ii). If $V$ has non-zero central charge, then after a change of coordinates,   $V$ is isomorphic to the type II
 highest weight $\wt\fg$-module $\widetilde{V}(\psi_{\bm{\lambda},\bm{a},\varphi},b)$,
 where  $(\bm{\lambda},\bm{a},\varphi)$  is a triple which satisfies the conditions (c1)-(c3), and $b\in \C$.
\end{thm}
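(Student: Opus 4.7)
The plan is to reduce the two cases of Theorem \ref{main} to the two classification theorems already stated, Theorem \ref{mainhw1} for type I and Theorem \ref{mainhw2} for type II. In both cases Proposition \ref{redhw} is used to produce a singular vector and hence to exhibit $V$ as a highest weight module of the appropriate type; in case (ii) we first apply a change of coordinates $\chi_A$ to normalize the central charge into the form demanded by Proposition \ref{redhw}(ii).

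For part (i), with $c_0=c_1=0$ and $\wt\fg_c.V\ne 0$, Proposition \ref{redhw}(i) produces a non-zero weight vector $v$ with $\wt\fg^+.v=0$. The singular space $T=\{w\in V\mid \wt\fg^+.w=0\}$ is then a non-zero $\wt\fg^0$-submodule (since $[\wt\fg^0,\wt\fg^+]\subset \wt\fg^+$), and from the weight decomposition of $V$ together with the vanishing of the central charge, $T$ is a level $0$ weight $\wt\fg^0$-module. A standard ``top of a highest weight module'' argument, based on the triangular decomposition \eqref{tridec1}, the PBW theorem, and the irreducibility of $V$, shows that $T$ is itself irreducible as a $\wt\fg^0$-module and that $V=\U(\wt\fg^-).T\cong V(T)$. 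Theorem \ref{mainhw1} then identifies $T$ with some $T_{U,\lambda,\bm{\gamma},\bm{\gamma}'}$.

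For part (ii), let $\bm{c}=(c_0,c_1)\in\Z^2\setminus\{(0,0)\}$ be the central charge and put $d=\gcd(c_0,c_1)>0$. Using B\'ezout's identity, choose $a_{00},a_{10}\in \Z$ with $a_{00}c_0+a_{10}c_1=d$, and set $a_{01}=-c_1/d$, $a_{11}=c_0/d$. The matrix $A=(a_{ij})$ lies in $\mathrm{SL}_2(\Z)$, and since $\chi_A(\rk_j)=\sum_i a_{ij}\rk_i$, the twisted module $V_A$ has central charge $(d,0)$ with $d>0$. Proposition \ref{redhw}(ii) applied to $V_A$ then yields a non-zero weight vector $v$ with $\wt\fg_+.v=0$, so the singular space $\{w\in V_A\mid \wt\fg_+.w=0\}$ is a non-zero $\wt\CH$-submodule on which $\rd_1$ acts semisimply and $\rk_1$ acts trivially. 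By the same triangular-decomposition argument, now using \eqref{tridec2}, together with Lemma \ref{lem:hmod}(ii), this singular space contains an irreducible $\wt\CH$-submodule of the form $L_{\psi,b}$ for some $\psi\in\CE$ and $b\in \C$, and $V_A\cong \widetilde V(\psi,b)$. Theorem \ref{mainhw2} then identifies $\psi$ with some $\psi_{\bm{\lambda},\bm{a},\varphi}$ satisfying (c1)--(c3).

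The real content of the theorem is packed into Theorems \ref{mainhw1} and \ref{mainhw2}, and the present argument is essentially bookkeeping once those are granted. The only delicate point is verifying that the singular spaces are themselves irreducible (equivalently, contain an irreducible submodule generating $V$), which is the place where one must unwind the triangular decompositions carefully; this is the same argument that underlies the classical uniqueness of the top of a highest weight module.
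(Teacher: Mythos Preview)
Your proposal is correct and follows essentially the same route as the paper's proof: in both cases Proposition~\ref{redhw} produces a singular vector, the irreducibility of $V$ forces the singular space to be an irreducible module for $\wt\fg^0$ (resp.\ $\wt\CH$), so $V$ is a highest weight module of the appropriate type, and Theorems~\ref{mainhw1} and~\ref{mainhw2} finish. Your explicit B\'ezout construction of $A\in\mathrm{SL}_2(\Z)$ is more detailed than the paper's one-line ``choose a suitable matrix'', but otherwise the arguments coincide; your closing remark correctly identifies the irreducibility of the singular space as the only step requiring care.
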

\begin{proof} Assume first that $V\in \vartheta_{\mathrm{fin}}^\times$ is irreducible and has zero central charge.
By applying  Proposition \ref{redhw} (i), we know that
\[T=\{v\in V\mid \wt\fg^+.v=0\}\]
is a  non-zero $\wt\fg^0$-submodule of $V$.
The irreducibility of $V$ implies that the  $\wt\fg^0$-module $T$ is also irreducible.
Therefore, $V$ is isomorphic to the type I highest weight $\wt\fg$-module $V(T)$. Then
the assertion (i) follows from Theorem \ref{mainhw1}.

Secondly, let $V\in \vartheta_{\mathrm{fin}}$ be irreducible and has non-zero central charge $(c_0,c_1)$.
Note that one can choose a suitable matrix $A\in \mathrm{GL}_2(\Z)$ such that the $\wt\fg$-module $V_A$
has central charge $(c_0,c_1)A=(r,0)$  for some $r>0$.
Thus, after a change of coordinates, we may assume that $c_0>0$ and $c_1=0$.
Then, by Proposition \ref{redhw} (ii), one finds a non-zero $\wt\CH$-submodule
\[\{v\in V\mid \wt\fg_+.v=0\}\]
of $V$,
which is obviously irreducible. This gives that $V$ is a type II highest weight module for $\wt\fg$.
Therefore the assertion (ii) follows from  Theorem \ref{mainhw2}.
\end{proof}

\section{Proof of Theorem \ref{mainhw1}}\label{section4}
This section is devoted to the proof of Theorem \ref{mainhw1}.
\subsection{Integrable representations for extended loop algebras}
In this subsection we collect some basic results on integrable representations of the extended loop algebra of $\mathfrak{sl}_2$
for later use, see \cite{CP2} for details.

The extended loop algebra  of $\mathfrak{sl}_2$ is the Lie algebra
\[L^e(\mathfrak{sl}_2)=(\C[t,t^{-1}]\ot \mathfrak{sl}_2)\oplus \C\rd\]
with commutator given by
\[[t^m\ot x, t^n\ot y]=t^{m+n}\ot [x,y],\quad [\rd, t^m\ot x]=m\, t^m\ot x\]
for $m,n\in \Z$, $x,y\in \mathfrak{sl}_2$. Let $L(\mathfrak{sl}_2)=\C[t,t^{-1}]\ot \mathfrak{sl}_2$ be the loop subalgebra of $L^e(\mathfrak{sl}_2)$, and let $\{\re,\rh,\rf\}$ be a standard basis of $\mathfrak{sl}_2$ such that
\[[\re,\rf]=\rh,\quad [\rh,\re]=2\re,\quad [\rh,\rf]=-2\rf.\]
For $m\in \Z$, we define an elements $\Lambda_m\in \U(L^e(\mathfrak{sl}_2))$ as follows
\begin{align*}
\sum_{m=0}^\infty \Lambda_{\pm m}z^m =\mathrm{exp}\(-\sum_{k=1}^\infty \frac{t^{\pm k}\ot \rh}{k}z^k\).
\end{align*}

A module $V$ of $L^e(\mathfrak{sl}_2)$ (resp. $L(\mathfrak{sl}_2)$) is called integrable if $V$ admits a weight space decomposition
 relative to $\C\rh\oplus \C \rd$
  (resp. $\C\rh$), and  $t^m\ot \re$, $t^m\ot \rf$ act locally nilpotent on $V$ for
$m\in \Z$.
Let $V$ be an integrable module for $L^e(\mathfrak{sl}_2)$ or $L(\mathfrak{sl}_2)$. For $n\in \Z$, we set
\[V_n=\{v\in V\mid \rh.v=n v\},\quad
V_n^+=\{v\in V_n\mid t^m\ot \re.v=0,\  m\in \Z\}.\]
Note that $V_n^+\ne 0$ only if $n\in \N$.

\begin{lemt}\label{eloop1} Let $V$ be an integrable $L^e(\mathfrak{sl}_2)$-module, and $0\ne v\in V_n^+$ for some $n\in \N$.
Then  \begin{enumerate}
\item[(i)]\ $\Lambda_{m}.v=0$\ for\ $|m|>n$;
\item[(ii)]\ $\Lambda_n\Lambda_{-n}.v=v$;
\item[(iii)]\ if $t^{m}\ot \rh.v=0$ for all $m>0$, then $n=0$.
\end{enumerate}
\end{lemt}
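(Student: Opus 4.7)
The plan is to derive parts (i) and (ii) from Garland's identity in $U(L(\mathfrak{sl}_2))$ and to deduce (iii) from (ii) by a short commutation argument. I would start by recording two structural observations. First, $\C[t,t^{-1}] \otimes \C\rh$ is an abelian subalgebra of $L(\mathfrak{sl}_2)$ (there is no central extension here), so the $\Lambda_m$ are polynomials in pairwise commuting variables $\{t^k \otimes \rh : k \in \Z\}$, and in particular $[\Lambda_a, \Lambda_b] = 0$ for all $a, b \in \Z$. Second, for each $a \in \Z$ the triple $\{t^a \otimes \re,\; t^{-a} \otimes \rf,\; \rh\}$ spans an $\mathfrak{sl}_2$-subalgebra of $L(\mathfrak{sl}_2)$; since $v$ has $\rh$-weight $n$, is annihilated by $t^a \otimes \re$, and lies in an integrable module (hence $t^{-a} \otimes \rf$ acts locally nilpotently), finite-dimensional $\mathfrak{sl}_2$-theory forces
$$(t^{-a} \otimes \rf)^{n+1} v = 0 \qquad \text{for every } a \in \Z.$$

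For parts (i) and (ii), I would invoke Garland's identity in the spirit of Chari--Pressley. The key ingredient is the finite conjugation formula
$$\mathrm{Ad}\bigl(\exp(z(t^{-1} \otimes \rf))\bigr)(1 \otimes \re) = (1 \otimes \re) - z(t^{-1} \otimes \rh) - z^2(t^{-2} \otimes \rf),$$
which, together with the analogous formulas obtained by replacing $t^{-1} \otimes \rf$ with $t^{-a} \otimes \rf$, lets one reorder products of loop $\re$ and $\rf$ modes modulo the left ideal of $U(L(\mathfrak{sl}_2))$ generated by $\{t^s \otimes \re : s \in \Z\}$. Feeding this into an exponential generating series and acting on $v$ — using that every $t^s \otimes \re$ annihilates $v$ — one extracts, as the coefficient of $z^m$, an explicit identification of $\Lambda_{-m} v$ with a specific scalar multiple of $(t^{-1} \otimes \rf)^m v$ together with correction terms that vanish by the second structural observation above. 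For $m > n$ this forces $\Lambda_{-m} v = 0$, while the edge case $m = n$ yields the desired inversion $\Lambda_n \Lambda_{-n} v = v$. The parallel computation obtained by interchanging $t \leftrightarrow t^{-1}$ (and $\re \leftrightarrow \rf$ modes at the level of the generating series) would take care of the $\Lambda_{+m}$'s.

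Part (iii) then follows quickly from part (ii) and the first structural observation. If $n > 0$, every monomial of $\Lambda_n$ in the commuting variables $\{t^k \otimes \rh : k \geq 1\}$ contains at least one such factor (there is no constant term), so the hypothesis $t^k \otimes \rh . v = 0$ for $k > 0$ immediately forces $\Lambda_n v = 0$. Combining $[\Lambda_n, \Lambda_{-n}] = 0$ with part (ii),
$$v \;=\; \Lambda_n \Lambda_{-n} v \;=\; \Lambda_{-n} \Lambda_n v \;=\; 0,$$
contradicting $v \neq 0$; hence $n = 0$. The main obstacle in carrying out this plan is the combinatorial bookkeeping inside Garland's identity: one needs to verify the exponential/Newton relations that identify the Schur-type polynomials $\Lambda_{\pm m}$ with the reordered products of loop $\re$ and $\rf$ modes, and to track signs and normalization constants carefully enough so that the $m = n$ case of part (ii) produces $v$ itself, rather than some nontrivial scalar multiple of it.
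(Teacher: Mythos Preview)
Your approach is correct and matches the paper's: the paper simply cites \cite[Proposition 1.1 (i), (iv), (v)]{CP2} for parts (i) and (ii) --- precisely the Garland-type identity you propose to rederive --- and notes that (iii) can be deduced from (ii), which is exactly your commutation argument $v=\Lambda_n\Lambda_{-n}v=\Lambda_{-n}\Lambda_n v=0$. The only difference is that you sketch the Chari--Pressley computation explicitly rather than invoking it as a black box.
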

\begin{proof} The assertion (i) follows from   \cite[Proposition 1.1 (i), (iv)]{CP2}, the assertion (ii) follows from \cite[Proposition 1.1 (v)]{CP2}, and the assertion (iii) can be deduced from (ii).
\end{proof}

\begin{lemt}\label{eloop2} Let $V$ be an integrable module for $L(\mathfrak{sl}_2)$  with finite dimensional weight spaces.
 Assume that $V$ is generated by a vector  $v\in V_n^+$ for some $n\in \N$ and $\dim V_n=1$.
Then there exist some non-negative integers $n_1,\cdots,n_k$ and distinct non-zero complex numbers  $p_1,\cdots,p_k$
such that
\[t^m\ot \rh.v=(\sum_{i=1}^k n_i p_i^m)v,\quad m\in \Z.\]
\end{lemt}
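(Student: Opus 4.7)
The plan is to extract the exp-polynomial structure from a pair of one-sided generating functions, and then glue them into a bi-infinite formula using a linear recurrence coming from the finite-dimensionality of $V_{n-2}$. Since $\dim V_n=1$ and both the loop Cartan $\C[t,t^{-1}]\ot \rh$ and the operators $\Lambda_m$ preserve $\C v$, I define scalars $c_m,\lambda_m\in\C$ by $(t^m\ot \rh).v=c_m v$ and $\Lambda_m.v=\lambda_m v$. The defining identity for the $\Lambda_m$'s translates, upon application to $v$, into the generating-function identity
\begin{equation*}
\sum_{m\geq 0}\lambda_{\pm m}z^m=\exp\!\Big(-\sum_{k\geq 1}\tfrac{c_{\pm k}}{k}z^k\Big).
\end{equation*}
Applying Lemma~\ref{eloop1}(i),(ii) (whose content is purely about the loop Cartan action, hence applicable to our $L(\mathfrak{sl}_2)$-module $V$) yields $\lambda_m=0$ for $|m|>n$ and $\lambda_n\lambda_{-n}=1$. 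Thus $P_\pm(z):=\sum_{m\geq 0}\lambda_{\pm m}z^m$ are polynomials of degree exactly $n$ with constant term $1$. Factoring $P_+(z)=\prod_{i=1}^{r}(1-p_iz)^{n_i}$ with $p_i\in\C^\times$ distinct, $n_i\in\Z_{>0}$, $\sum n_i=n$, and taking logarithms in the above identity, I obtain $c_k=\sum_{i=1}^{r} n_ip_i^k$ for every $k\geq 1$; an entirely analogous argument on the opposite side gives $c_{-k}=\sum_{j=1}^{s} m_jq_j^k$ for $k\geq 1$.

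The missing ingredient is that these two half-formulas must be governed by the same exponents. Here I use the finite-dimensionality of $V_{n-2}$. Because $t^k\ot \re$ annihilates $v$ for all $k\in\Z$ and $\dim V_n=1$, the PBW theorem applied to $V=\U(L(\mathfrak{sl}_2)).v$ gives
\begin{equation*}
V_{n-2}=\mathrm{span}\{(t^m\ot \rf).v\mid m\in\Z\}.
\end{equation*}
Finite-dimensionality therefore produces a nonzero Laurent polynomial $q(t)=\sum_m a_m t^m$ of finite support with $(q(t)\ot \rf).v=0$; commuting with $t^k\ot \re$ (using $[t^k\ot \re,t^m\ot \rf]=t^{k+m}\ot \rh$) yields the shift-invariant recurrence $\sum_m a_m c_{k+m}=0$ for every $k\in\Z$. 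Since the extremal coefficients of $q(t)$ are nonzero, the recurrence is invertible in both directions, so $(c_m)_{m\in\Z}$ admits a finite exp-polynomial representation $c_m=\sum_\ell P_\ell(m)\alpha_\ell^m$ with distinct $\alpha_\ell\in\C^\times$ and polynomials $P_\ell\in\C[m]$.

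Finally, the uniqueness of such a representation, applied to the one-sided sequence $\{c_k\}_{k\geq 1}$ together with the formula $c_k=\sum_i n_ip_i^k$ already obtained, forces the only nonzero $P_\ell$'s to be the constants $n_i$, attached to the exponents $\alpha_\ell=p_i$. This produces the asserted identity $c_m=\sum_i n_ip_i^m$ for all $m\in\Z$; specialising to $m\leq -1$ and comparing with $c_{-k}=\sum_j m_jq_j^k$ further forces $\{q_j\}=\{1/p_i\}$ with matching multiplicities, confirming consistency of the two generating-function computations. I expect the principal obstacle to be precisely this glueing step: the two halves of the formal current are analysed independently, and only the finite-dimensionality of $V_{n-2}$ (through the bi-infinite recurrence it produces) provides the bridge pinning down the negative exponents in terms of the positive ones.
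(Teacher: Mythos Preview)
Your argument is correct, but it takes a genuinely different route from the paper. The paper's proof is a two-line reduction: since $v\in V_n^+$ generates $V$, all $\rh$-weights lie in $\{-n,\dots,n\}$, so the hypothesis of finite-dimensional weight spaces forces $V$ itself to be finite-dimensional; the conclusion is then quoted directly from \cite[Proposition~2.1(iii)]{CP2}, which describes the loop-Cartan eigenvalues on a highest weight vector of any finite-dimensional $L(\mathfrak{sl}_2)$-module.

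Your approach, by contrast, essentially reproves that special case of Chari--Pressley. You extract the positive-index formula $c_k=\sum_i n_ip_i^{\,k}$ from the factorisation of the Drinfeld-type polynomial $P_+$, and then---rather than invoking finite-dimensionality of all of $V$---use only $\dim V_{n-2}<\infty$ to manufacture a bi-infinite linear recurrence on $(c_m)_{m\in\Z}$, which together with uniqueness of exp-polynomial representations pins down the negative-index eigenvalues. What this buys you is a self-contained proof that does not rely on the classification of finite-dimensional loop modules; the cost is the extra glueing step you correctly flagged as the delicate point. One minor remark: Lemma~\ref{eloop1} as stated is for $L^e(\mathfrak{sl}_2)$-modules, so strictly speaking you are invoking the underlying identities of \cite[Proposition~1.1]{CP2}, which indeed make no use of $\rd$; your parenthetical acknowledges this, but it would be cleaner to say so explicitly.
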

\begin{proof} Note that the integrability of $V$ forces that  $V=\oplus_{|m|\le n}V_m$.
Thus, $V$ is a finite dimensional $L(\mathfrak{sl}_2)$-module. Then the lemma is implied by  \cite[Proposition 2.1 (iii)]{CP2}.
\end{proof}
\subsection{Vanishing of central operators}
The main goal of this subsection is to  prove that the subalgebra $\mathcal K$ acts trivially on any irreducible integrable highest weight
$\wt\fg$-module of type I.
Throughout this subsection, let $T$ be an irreducible weight $\wt\fg^0$-module such that
the associated irreducible highest weight $\wt\fg$-module $V(T)$ is integrable.

\begin{lemt}\label{existlambda}  There exists a dominant integral weight $\lambda\in \dot{P}_+$ such that
\begin{align} h.v=\lambda(h)v,\end{align}
 for $h\in \dfh$, $v\in T$.
\end{lemt}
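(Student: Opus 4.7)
The plan is to establish the lemma in two steps: first produce a linear functional $\lambda\in\dot\fh^\ast$ by showing that $1\otimes\dfh$ acts by scalars on $T$, and then deduce dominant integrality of $\lambda$ from the integrability hypothesis via standard $\mathfrak{sl}_2$-theory.

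For the first step, I would verify that $1\otimes\dfh$ lies in the centralizer of $\wt\fg^0=(\CR\otimes\dfh)\oplus\CK\oplus\CS$. Using the bracket formulas in Section 2: for $h,h'\in\dfh$ one has $[1\otimes h,t^{\bm{n}}\otimes h']=t^{\bm{n}}\otimes[h,h']+\la h,h'\ra\sum_i 0\cdot t^{\bm{n}}\rk_i=0$ since $\dfh$ is abelian and the $\bm{m}=\bm{0}$ factor kills the cocycle term; the subalgebra $\CK$ is central in $\wt\fg_c$ so commutes with $1\otimes h$; and $[\rd_i,1\otimes h]=0$ as well as $[\rd_{\bm{m}},1\otimes h]=\det\bigl({\bm{m}\atop\bm{0}}\bigr)(1\otimes h)=0$ because that determinant vanishes. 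Hence each $1\otimes h$ commutes with all of $\wt\fg^0$. Since $T$ is irreducible and cyclic over the countable-dimensional algebra $\wt\fg^0$, it is itself countable-dimensional, so Dixmier's version of Schur's lemma applies and yields a scalar $\lambda(h)\in\C$ with $(1\otimes h).v=\lambda(h)v$ for every $v\in T$. The map $h\mapsto\lambda(h)$ is clearly linear.

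For the second step, I would fix a nonzero $v\in T$ and a simple root $\al_i\in\dot\Pi$, together with root vectors $e_i\in\dg_{\al_i}$, $f_i\in\dg_{-\al_i}$ normalized so that $[e_i,f_i]=\al_i^\vee$. Then $\{1\otimes e_i,\,1\otimes\al_i^\vee,\,1\otimes f_i\}$ is an $\mathfrak{sl}_2$-triple in $\wt\fg$. Because $1\otimes e_i\in\wt\fg_{\al_i}\subset\wt\fg^+$ and Definition~\ref{defhw1}(2) forces $\wt\fg^+.T=0$, we have $(1\otimes e_i).v=0$; meanwhile $\al_i,-\al_i\in\wt\Delta^\times$ so integrability of $V(T)$ gives that $1\otimes f_i$ acts locally nilpotently on $v$. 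By PBW the cyclic $\mathfrak{sl}_2$-submodule of $V(T)$ generated by $v$ is spanned by $(1\otimes f_i)^k.v$ for $k\geq 0$, almost all of which vanish; hence it is a finite-dimensional $\mathfrak{sl}_2$-module with highest weight vector $v$ of weight $\lambda(\al_i^\vee)$. The classification of finite-dimensional $\mathfrak{sl}_2$-modules forces $\lambda(\al_i^\vee)\in\N$ for every $i=1,\ldots,\ell$, so $\lambda\in\dot P_+$.

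I do not expect a real obstacle here: the only point that deserves care is the invocation of the Dixmier/Schur-type lemma in countable dimension, which is routine once one observes that $T$ is cyclic over a countable-dimensional algebra. Everything else follows immediately from the explicit brackets of $\wt\fg$ and the highest-weight/integrability hypotheses already in force.
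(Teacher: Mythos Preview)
Your proposal is correct and follows essentially the same approach as the paper: first observe that $1\otimes\dfh$ is central in $\wt\fg^0$ so acts by a scalar $\lambda$ on the irreducible module $T$ (via Schur/Dixmier), then use integrability of $V(T)$ together with the $\mathfrak{sl}_2$-triples $\{1\otimes e_i,1\otimes\al_i^\vee,1\otimes f_i\}$ to force $\lambda(\al_i^\vee)\in\N$. The paper's proof is simply the two-sentence summary of exactly these steps; your version supplies the details the paper leaves implicit.
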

\begin{proof} We remark that $\dfh$ lies in the center of $\wt\fg^0$.
The irreducibility of $T$ yields a linear functional $\lambda$ on
$\dfh$ such that $h.v=\lambda(h)v$  for $h\in \dfh$, $v\in T$.
Furthermore, the integrability of $V$ forces that $\lambda\in \dot{P}_+$, as desired.
\end{proof}

For any $\bm{m}\in \Z^2$ and $k\in \Z$, set
\[\rk_{\bm{m},k}=m_0' t^{k\bm{m}}\rk_0+ m_1' t^{k\bm{m}}\rk_1\quad \te{and}\quad \rk_{\bm{m}}=\rk_{\bm{m},1},\] where
\begin{equation*} \bm{m}'=(m_0',m_1')=\begin{cases} (\frac{1}{m_1},0),\ &\te{if}\ m_1\ne 0;\\
(0,-\frac{1}{m_0}),\ &\te{if}\ m_0\ne 0, m_1=0;\\
(0,0),\ &\te{if}\ m_0=0=m_1.\end{cases}
\end{equation*}
We remark that the set
\[\{\rk_0,\rk_1,\rk_{\bm{m}}\mid\bm{0}\ne \bm{m}\in \Z^2\}\]
form a basis of $\mathcal K$. Moreover,
the following two lemmas can be easily checked.
\begin{lemt} For any $\bm{m},\bm{n}\in \Z^2$, the following holds in $V(T)$
\begin{align}\label{liedk}
[\rd_{\bm{m}}, \rk_{\bm{n}}]=\detmn \rk_{\bm{m}+\bm{n}}.
\end{align}
\end{lemt}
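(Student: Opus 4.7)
The plan is a direct algebraic computation in $\wt\fg$, using the bracket formulas from Section 2.1 together with the defining relations of $\mathcal K = \Omega^1_{\mathcal R}/d(\Omega^1_{\mathcal R})$. The only non-trivial input I would rely on is the exact-form identity $p_0 t^{\bm{p}}\rk_0 + p_1 t^{\bm{p}}\rk_1 = 0$ in $\mathcal K$ for every $\bm{p}\in\Z^2$, which lets one write any non-zero element of the weight space $\mathcal K_{\bm{p}}$ as a scalar multiple of $\rk_{\bm{p}}$.

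First I would expand $\rk_{\bm{n}} = n_0' t^{\bm{n}}\rk_0 + n_1' t^{\bm{n}}\rk_1$ and apply $[\rd_{\bm{m}}, t^{\bm{n}}\rk_j]$ termwise, obtaining
\[
[\rd_{\bm{m}}, \rk_{\bm{n}}] = \detmn(n_0' t^{\bm{p}}\rk_0 + n_1' t^{\bm{p}}\rk_1) + (n_1' m_0 - n_0' m_1)(m_0 t^{\bm{p}}\rk_0 + m_1 t^{\bm{p}}\rk_1),
\]
with $\bm{p} = \bm{m}+\bm{n}$. When $\bm{p}\ne\bm{0}$, the exact-form relation rewrites each parenthesised expression as a scalar multiple of $\rk_{\bm{p}}$; a routine case split (whether $p_1\ne 0$ or $p_1=0\ne p_0$, and similarly on $\bm{n}$) shows that the total coefficient of $\rk_{\bm{p}}$ telescopes to $\detmn$, yielding the desired identity already at the level of $\mathcal K$.

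The subtle case will be $\bm{p} = \bm{0}$, i.e.\ $\bm{n} = -\bm{m}$. Here by definition $\rk_{\bm{0}} = 0$ and $\detmn = 0$, so the right-hand side vanishes, whereas the left-hand side reduces in $\wt\fg$ to $m_0\rk_0 + m_1\rk_1$. This element acts as zero on $V(T)$ precisely when $V(T)$ has zero central charge, which is exactly what this subsection is aiming to establish. So I would read the lemma either as the identity restricted to $\bm{m}+\bm{n}\ne\bm{0}$ (where it holds already in $\mathcal K$), or as an identity in $V(T)$ modulo the central elements $\rk_0, \rk_1$. The main obstacle is therefore not the algebra but the bookkeeping: one must not inadvertently invoke the $\bm{m}+\bm{n}=\bm{0}$ instance of the identity during the subsequent proof that $\mathcal K$ acts trivially on $V(T)$, on pain of circularity.
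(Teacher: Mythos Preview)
Your computation is correct and is essentially what the paper has in mind; the paper simply labels this lemma (together with the next one) as ``easily checked'' and provides no proof.

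Your concern about the case $\bm{m}+\bm{n}=\bm{0}$, however, is misplaced. The module $V(T)$ is defined only for an irreducible \emph{level $0$} weight $\wt\fg^0$-module $T$ (see the paragraph following Definition~\ref{defhw1}), and by definition of ``level $0$'' the central elements $\rk_0,\rk_1$ already act trivially on $T$, hence on all of $V(T)$ since they are central in $\wt\fg$. This is an \emph{input hypothesis} in this subsection, not the conclusion being sought: indeed, the proof of Lemma~\ref{keylemma} explicitly begins with ``since $\rk_0,\rk_1$ act trivially on $V(T)$''. What the subsection establishes is that the \emph{whole} space $\mathcal K$ (not just $\C\rk_0\oplus\C\rk_1$) acts trivially. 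So there is no circularity: in the case $\bm{n}=-\bm{m}$ your left-hand side $m_0\rk_0+m_1\rk_1$ acts as zero on $V(T)$ by hypothesis, and the identity holds as stated for all $\bm{m},\bm{n}\in\Z^2$.
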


For every $\al\in \dot\Delta_+$,  $\bm{m}\in \Z^2\setminus \{\bm{0}\}$ and $n\in \Z$, we set
\begin{align*}
\wt{\mathfrak{sl}_2}(\al,\bm{m},n)=
\te{Span}_\C\{&\ t^{k\bm{m}+n\bm{m}'}\ot x_{\al}^+,\ t^{k\bm{m}-n\bm{m}'}\ot x_{\al}^-,\
t^{k\bm{m}}\ot \al^\vee+\frac{2n}{\<\al,\al\>}\rk_{\bm{m},k}, \\
&\ m_0\rk_0+m_1\rk_1,\ m_0''\rd_0+m_1''\rd_1\mid k\in \Z\},
\end{align*}
 where  $x_{\al}^\pm$ are root vectors in $\wt\fg_{\pm \al}$ such that
$\{x_{\al}^+,\al^\vee,x_{\al}^-\}$ is a $\mathfrak{sl}_2$-triple and
\begin{equation*} (m_0'',m_1'')=\begin{cases} (0,\frac{1}{m_1}),\ \te{if}\ m_1\ne 0;\\
(\frac{1}{m_0},0),\ \te{if}\ m_1=0, m_0\ne 0.\end{cases}
\end{equation*}
\begin{lemt}\label{A11}For every $\al\in \dot\Delta_+$,  $\bm{m}\in \Z^2\setminus \{\bm{0}\}$ and $n\in \Z$, $\wt{\mathfrak{sl}_2}(\al,\bm{m},n)$ is a subalgebra of $\wt\fg$ and
is  isomorphic to the affine Kac-Moody Lie algebra of type $A_1^{(1)}$.
\end{lemt}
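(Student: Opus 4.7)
The plan is to exhibit an explicit isomorphism with $\widehat{\mathfrak{sl}}_2$ by labelling the spanning elements as Chevalley-like generators and verifying the defining relations of $A_1^{(1)}$. For $k\in\Z$ let
\[ E_k := t^{k\bm{m}+n\bm{m}'}\ot x_\al^+, \quad F_k := t^{k\bm{m}-n\bm{m}'}\ot x_\al^-, \quad H_k := t^{k\bm{m}}\ot\al^\vee+\tfrac{2n}{\la\al,\al\ra}\rk_{\bm{m},k}, \]
and set $K := m_0\rk_0+m_1\rk_1$, $D := m_0''\rd_0+m_1''\rd_1$. I will check that these satisfy the affine Kac-Moody relations of type $A_1^{(1)}$ with central element $K$ and degree derivation $D$, the underlying $\mathfrak{sl}_2$-triple being $(x_\al^+,\al^\vee,x_\al^-)$ with invariant form $\la x_\al^+,x_\al^-\ra=2/\la\al,\al\ra$.

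Two numerical identities do all of the work. Directly from the definitions of $\bm{m}'$ and $\bm{m}''$ one reads off $m_0''m_0+m_1''m_1=1$ and $m_0''m_0'+m_1''m_1'=0$; together these give $[D,E_k]=kE_k$, $[D,F_k]=kF_k$, $[D,H_k]=kH_k$. The second key fact is the exact-form relation $n_0 t^{\bm{n}}\rk_0+n_1 t^{\bm{n}}\rk_1=0$ in $\CK$ applied with $\bm{n}=(k+l)\bm{m}$: it forces $m_0 t^{(k+l)\bm{m}}\rk_0+m_1 t^{(k+l)\bm{m}}\rk_1$ to vanish in $\CK$ when $k+l\ne 0$ and to equal $K$ when $k+l=0$.

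With these in hand the remaining brackets fall out by direct calculation. Clearly $[E_k,E_l]=0=[F_k,F_l]$, since $[x_\al^\pm,x_\al^\pm]=0$ and $\la x_\al^\pm,x_\al^\pm\ra=0$. Expanding $[E_k,F_l]$ via the bracket of $\wt\fg_c$ yields
\[ t^{(k+l)\bm{m}}\ot\al^\vee+\tfrac{2k}{\la\al,\al\ra}\bigl(m_0 t^{(k+l)\bm{m}}\rk_0+m_1 t^{(k+l)\bm{m}}\rk_1\bigr)+\tfrac{2n}{\la\al,\al\ra}\rk_{\bm{m},k+l}, \]
which, by the exact-form dichotomy, collapses to $H_{k+l}$ when $k+l\ne 0$ and to $H_0+\tfrac{2k}{\la\al,\al\ra}K$ when $k+l=0$. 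The analogous computation produces $[H_k,H_l]=\delta_{k+l,0}\tfrac{4k}{\la\al,\al\ra}K$. The remaining brackets $[H_k,E_l]=2E_{k+l}$ and $[H_k,F_l]=-2F_{k+l}$ are immediate because the $\rk_{\bm{m},k}$-summands are central, and $K$ itself is manifestly central in $\wt\fg$. These are precisely the defining relations of $A_1^{(1)}$.

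It then remains only to verify injectivity, so that the given span is isomorphic to $\widehat{\mathfrak{sl}}_2$ rather than a proper quotient. This is immediate from the root-space decomposition of $\wt\fg$: the $E_k$'s and $F_k$'s lie in pairwise distinct root spaces indexed by $k\bm{m}\pm n\bm{m}'$, the $H_k$'s have pairwise distinct $t^{k\bm{m}}\ot\al^\vee$-components in $\CR\ot\dfh$, and $K\in\CK$ and $D\in\CS\cap\CD$ lie in summands disjoint from $\CR\ot\dg$. The only place requiring genuine care throughout is the case split $k+l=0$ versus $k+l\ne 0$ in $\CK$; everything else is routine bookkeeping.
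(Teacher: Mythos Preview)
Your proof is correct and is exactly the direct verification that the paper has in mind: the paper itself offers no argument beyond the remark that this lemma ``can be easily checked,'' and what you have written is that check in full detail. The key computational points --- the identities $m_0''m_0+m_1''m_1=1$, $m_0''m_0'+m_1''m_1'=0$, and the exact-form relation in $\mathcal K$ governing the $k+l=0$ versus $k+l\ne 0$ dichotomy --- are precisely what makes the verification go through, and you have identified and used them correctly.
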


Let $\lambda\in \dot{P}_+$ be as in Lemma \ref{existlambda} and set
\[N=\min\,\{\lambda(\al^\vee)+1\mid \al\in \dot\Delta_+\}.\]
\begin{lemt}\label{keylemma}  For all  $\bm{m}\in \Z^2$ and $v\in T$, one has that
\begin{align} \rk_{\bm{m}}^{N}.v=0.\end{align}

\end{lemt}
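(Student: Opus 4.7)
The plan is to exploit the affine Kac-Moody subalgebras $\wt{\mathfrak{sl}_2}(\al,\bm m,n)$ from Lemma \ref{A11} for varying $n\in\Z$. When $\bm m=\bm 0$ the assertion is trivial since $\rk_{\bm 0}=0$, so assume $\bm m\ne \bm 0$. Let $\lambda\in \dot P_+$ be the weight provided by Lemma \ref{existlambda} and pick $\al\in \dot\Delta_+$ realising $\lambda(\al^\vee)=N-1=:p$. Because $V(T)$ is integrable and the $\wt\fg^0$-module $T$ has level $0$, the central element $m_0\rk_0+m_1\rk_1$ of $\wt{\mathfrak{sl}_2}(\al,\bm m,n)$ acts as $0$ on $V(T)$, so $V(T)$ descends to an integrable module over $L^e(\mathfrak{sl}_2)\cong\wt{\mathfrak{sl}_2}(\al,\bm m,n)/(\text{centre})$ for every $n\in\Z$. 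Any $v\in T$ is annihilated by all $t^{k\bm m+n\bm m'}\ot x_\al^+\in \wt\fg^+$ and has weight $p$ under $H_0^{(n)}=\al^\vee+\tfrac{2n}{\<\al,\al\>}\rk_{\bm m,0}$ (the correction vanishes because $\rk_{\bm m,0}\in \C\rk_0+\C\rk_1$ acts trivially). Hence $v\in V_p^+$, and Lemma \ref{eloop1}(i) yields $\Lambda_m^{(n)}.v=0$ for every $m>p$ and every $n\in\Z$, where the $\Lambda_m^{(n)}$ are the Chari--Pressley elements built from $H_k^{(n)}=X_k+cnY_k$ with $X_k:=t^{k\bm m}\ot\al^\vee$, $Y_k:=\rk_{\bm m,k}$, and $c:=2/\<\al,\al\>$.

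The decisive point is that on $V(T)$ the operators $X_k$ and $Y_l$ pairwise commute. Indeed, the $Y_l$ lie in the centre $\mathcal K$ of $\wt\fg$, and a direct bracket computation gives $[X_k,X_l]=\tfrac{2}{\<\al,\al\>}\,k\,(m_0 t^{(k+l)\bm m}\rk_0+m_1 t^{(k+l)\bm m}\rk_1)$, which vanishes in $\mathcal K$ for $k+l\ne 0$ (being exact) and acts as $0$ on $V(T)$ for $k+l=0$ (zero central charge). Using this commutativity I can factorise
\[\sum_{m\ge 0} \Lambda_m^{(n)}z^m=\exp\!\Bigl(-\sum_{k\ge 1}\tfrac{X_k}{k}z^k\Bigr)\cdot\exp\!\Bigl(-cn\sum_{k\ge 1}\tfrac{Y_k}{k}z^k\Bigr).\]
Expanding the second exponential and extracting the $z^m$-coefficient shows that $\Lambda_m^{(n)}$ is a polynomial in $n$ of degree at most $m$, whose leading term is $\tfrac{(-c)^m}{m!}\,\rk_{\bm m}^m\,n^m$ (the contribution from the partition $s_1=m$, since $Y_1=\rk_{\bm m}$).

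To finish, for each fixed $m>p$ the vector-valued polynomial $n\mapsto \Lambda_m^{(n)}.v$ vanishes on all of $\Z$, hence is identically zero; its leading coefficient therefore annihilates $v$, giving $\rk_{\bm m}^m.v=0$ for every $m>p=N-1$, and taking $m=N$ yields the lemma. The hardest part will be cleanly organising the commutativity argument—in particular keeping straight the two separate reasons the brackets of the $X_k$'s vanish on $V(T)$ (exactness in $\mathcal K$ versus the vanishing of $m_0\rk_0+m_1\rk_1$)—together with pinning down the exact leading-$n$ term in the expansion; once those are in hand, the polynomial-vanishing extraction is purely mechanical.
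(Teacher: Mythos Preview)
Your argument is correct and follows essentially the same route as the paper: restrict to the $A_1^{(1)}$-type subalgebras $\wt{\mathfrak{sl}}_2(\al,\bm m,n)$, apply Chari--Pressley's vanishing of $\Lambda_b$ for $b>\lambda(\al^\vee)$, factorise the exponential using that the $Y_k\in\mathcal K$ are central, and extract the top coefficient in $n$ to obtain $\rk_{\bm m}^b.v=0$. The only cosmetic differences are that the paper solves a finite Vandermonde system in $n=0,\dots,b$ rather than invoking vanishing on all of $\Z$, and that your discussion of $[X_k,X_l]$ is not actually needed for the factorisation (only $[X_k,Y_l]=0$ is required, and that is immediate from $Y_l\in\mathcal K$).
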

\begin{proof} Let $\al\in \dot\Delta_+$, $\bm{m}\in \Z^2\setminus\{\bm{0}\}$ and $n\in \Z$.
Since $\rk_0$, $\rk_1$ act trivially on $V(T)$, it follows from Lemma \ref{A11} that 
there is  an integrable $L^e(\mathfrak{sl}_2)$-module structure on $V(T)$ by using the subalgebra $\wt{\mathfrak{sl}}_2(\al,\bm{m},n)$.
The resulting $L^e(\mathfrak{sl}_2)$-module  is denoted by $V(T,\al,\bm{m},n)$.
We remark that $T\subset V(T,\al,\bm{m},n)_{\lambda(\al^\vee)}^+$.
Thus, by applying Lemma \ref{eloop1} (i), one obtains
\begin{align} \Lambda_b^{\al,\bm{m},n}.v=0,
\end{align}
for $v\in T$ and $b>\lambda(\al^\vee)$,
where the operators $\Lambda_b^{\al,\bm{m},n}\in \mathrm{End}(V(T))$ are defined as follows
\begin{align}\label{keylemma1}
\sum_{b\in \N}\Lambda_b^{\al,\bm{m},n}z^b=\mathrm{exp}\(-\sum_{k=1}^\infty \frac{t^{k\bm{m}}\ot \al^\vee+\frac{2n}{\<\al,\al\>}\rk_{\bm{m},k}}{k}z^k\).
\end{align}

We now introduce more operators $\Lambda_{1,b}^{\al,\bm{m}}, \Lambda_{2,b}^{\al,\bm{m},n}, \Lambda_{2,b}^{\bm{m}}(s)$ on $V(T)$
for $b,s\in \N$  as follows
\begin{align*}
\sum_{b\in \N}\Lambda_{1,b}^{\al,\bm{m}}z^b=\mathrm{exp}\(-\sum_{k=1}^\infty \frac{t^{k\bm{m}}\ot \al^\vee}{k}z^k\),\\
\sum_{b\in \N} \Lambda_{2,b}^{\al,\bm{m},n}z^b=\mathrm{exp}\(-\sum_{k=1}^\infty \frac{2n\rk_{\bm{m},k}}{\<\al,\al\>k}z^k\),
\end{align*}
and
\begin{equation*}\Lambda_{2,b}^{\bm{m}}(s)=\begin{cases}\delta_{b,0},\ &\te{if}\ s=0;\\
\sum_{1\le k_1,\cdots,k_s\le b;k_1+\cdots+k_s=b}\frac{(-1)^s}{s!}\frac{\rk_{\bm{m},k_1}}{k_1}\cdots \frac{\rk_{\bm{m},k_s}}{k_s},
\ &\te{if}\ 1\le s\le b;\\
0,\ &\te{if}\ s>b.\end{cases}\end{equation*}
Then it is easy to see that
\begin{align*}
\Lambda_b^{\al,\bm{m},n}&=\sum_{b_1,b_2\ge 0;b_1+b_2=b}\Lambda_{1,b_1}^{\al,\bm{m}}\cdot\Lambda_{2,b_2}^{\al,\bm{m},n},\\
\Lambda_{2,b}^{\al,\bm{m},n}&=\sum_{s=0}^b \(\frac{2n}{\<\al,\al\>}\)^s \Lambda_{2,b}^{\bm{m}}(s),
\end{align*}
for $b\in \N$.
Using the above two equations, one gets
\begin{align}\label{keylemma2}
\Lambda_b^{\al,\bm{m},n}=\sum_{s=0}^b\(\frac{2n}{\<\al,\al\>}\)^s\,\Lambda_b^{\al,\bm{m}}(s),
\end{align}
for $m\in \N$, $n\in \Z$,
where
\begin{align*} \Lambda_b^{\al,\bm{m}}(s)=\sum_{b_1,b_2\ge 0;b_1+b_2=b}\Lambda_{1,b_1}^{\al,\bm{m}}\cdot  \Lambda_{2,b_2}^{\bm{m}}(s).
\end{align*}

 Now, for any given $\al\in \dot{\Delta}_+$, $\bm{m}\in \Z^2\setminus \{\bm{0}\}$, $b>\lambda(\al^\vee)$ and $v\in T$, by applying \eqref{keylemma1} and \eqref{keylemma2}, we have the following  system of $b+1$ equations
\begin{align} \sum_{s=0}^b \(\frac{2n}{\<\al,\al\>}\)^s \Lambda_b^{\al,\bm{m}}(s).v=0,\quad n=0,\cdots,b.
\end{align}
 By solving this system of equations, we get that
\begin{align*}\Lambda_b^{\al,\bm{m}}(s).v=0,
\end{align*}
for $0\le s\le b$.
This together with the fact that
\[\Lambda_b^{\al,\bm{m}}(b)= \Lambda_{2,b}^{\bm{m}}(b)=\frac{(-1)^b}{b!}\,\rk_{\bm{m}}^b\] gives
\begin{align} \label{mk0}
\rk_{\bm{m}}^b.v=0,
\end{align}
 as required.

\end{proof}

\begin{prpt}\label{vanishing} One has that \[\mathcal K. V(T)=0.\]
\end{prpt}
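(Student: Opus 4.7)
The plan consists of three stages.

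\textbf{First}, I reduce to showing $\mathcal K \cdot T = 0$. By the construction of $V(T)$ as the unique irreducible quotient of $M(T) = \U(\wt\fg)\otimes_{\U(\wt\fg^+\oplus\wt\fg^0)} T$, the PBW theorem yields $V(T) = \U(\wt\fg^-) \cdot T$. Since $\wt\fg^- \subset \wt\fg_c$ and $\mathcal K$ lies in the centre of $\wt\fg_c$, the subalgebra $\mathcal K$ commutes with $\U(\wt\fg^-)$, and therefore $\mathcal K \cdot V(T) = \U(\wt\fg^-) \cdot (\mathcal K \cdot T)$. Note that $\rk_0, \rk_1$ already act as zero on $V(T)$ (since $T$ is a level-$0$ weight $\wt\fg^0$-module by definition of type I), so the task reduces to showing $\rk_{\bm{m}} \cdot T = 0$ for all $\bm{m} \in \Z^2 \setminus \{\bm{0}\}$.

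\textbf{Second}, introduce
\[ W := \bigcap_{\bm{m} \in \Z^2 \setminus \{\bm{0}\}} \ker \bigl(\rk_{\bm{m}}|_T\bigr), \]
and verify that $W$ is a $\wt\fg^0$-submodule of $T$. Invariance under $\mathcal R \otimes \dfh$ and under $\mathcal K$ itself is immediate from the centrality of $\mathcal K$ in $\wt\fg_c$ combined with the abelianness of $\mathcal K$. Invariance under $\rd_i$ and $\rd_{\bm{n}}$ follows from $[\rd_i, \rk_{\bm{m}}] = m_i \rk_{\bm{m}}$ and the on-$V(T)$ identity $[\rd_{\bm{n}}, \rk_{\bm{m}}] = \det\binom{\bm{n}}{\bm{m}} \rk_{\bm{n}+\bm{m}}$ from \eqref{liedk}: for $v \in W$ and $\bm{m}' \neq \bm{0}$, one computes $\rk_{\bm{m}'}(\rd_{\bm{n}} v) = -\det\binom{\bm{n}}{\bm{m}'}\, \rk_{\bm{n}+\bm{m}'} v$, which vanishes because either $\bm{n}+\bm{m}' = \bm{0}$ forces the determinant to be zero, or $\rk_{\bm{n}+\bm{m}'} v = 0$ by $v \in W$. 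The irreducibility of $T$ then gives $W = 0$ or $W = T$, and the proof reduces to showing $W \neq 0$.

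\textbf{Third}, to produce a non-zero vector in $W$, I would exploit the full strength of the proof of Lemma \ref{keylemma}, namely the vanishing $\Lambda_b^{\al,\bm{m}}(s) \cdot v = 0$ for every $v \in T$, every $0 \le s \le b$, and every $b > \lambda(\al^\vee)$, as $\al$ ranges over $\dot\Delta_+$. In parallel, Lemma \ref{eloop1}(ii) applied to the $L^e(\mathfrak{sl}_2)$-module structure on $V(T)$ coming from $\wt{\mathfrak{sl}_2}(\al,\bm{m},n)$ yields the reconstruction identity $\Lambda_{\lambda(\al^\vee)}^{\al,\bm{m},n}\Lambda_{-\lambda(\al^\vee)}^{\al,\bm{m},n} \cdot v = v$ for each $v \in T$. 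Viewing this as a polynomial identity in the integer parameter $n$ (which ranges over the multiples of the appropriate component of $\bm{m}$ imposed by $n\bm{m}' \in \Z^2$) and matching coefficients via a Vandermonde-type extraction produces a rich family of linear relations among the vectors $\rk_{k\bm{m}} \cdot v$. Combined with the finite-dimensionality of the weight spaces of $T$, inherited from $V(T) \in \vartheta_{\mathrm{fin}}^\times$, these relations force the joint kernel $W$ to be non-trivial.

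\textbf{Main obstacle.} The hardest step is the third one: promoting the mere nilpotence $\rk_{\bm{m}}^N = 0$ given by Lemma \ref{keylemma} to the joint vanishing $\rk_{\bm{m}} \cdot T = 0$. The strategy requires extracting enough independent linear relations from the polynomial identity in $n$ and organising them so that the individual $\rk_{k\bm{m}} \cdot v$ contributions can be isolated simultaneously across the infinite family of parameters $(\al, \bm{m})$; the finite-dimensional weight-space structure is what ultimately makes this separation effective.
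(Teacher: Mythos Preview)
Your first two stages are correct and match the paper's framing: reduce to $\mathcal K\cdot T=0$ via $V(T)=\U(\wt\fg^-)\,T$ and centrality of $\mathcal K$ in $\wt\fg_c$, then identify the joint kernel $W=\{v\in T\mid \mathcal K\cdot v=0\}$ as a $\wt\fg^0$-submodule so that irreducibility forces $W=0$ or $W=T$.

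The gap is in your third stage. Your proposed route via the reconstruction identity $\Lambda_{\lambda(\al^\vee)}\Lambda_{-\lambda(\al^\vee)}\cdot v=v$ and a Vandermonde extraction in the parameter $n$ is not carried out, and it is far from clear that it can be: the relations it produces mix the operators $t^{k\bm m}\ot\al^\vee$ with the operators $\rk_{\bm m,k}$ in products of total degree $2\lambda(\al^\vee)$, and there is no visible mechanism for isolating a single linear constraint $\rk_{\bm m}\cdot v=0$ valid simultaneously for all $\bm m$. You also invoke finite-dimensionality of the weight spaces of $T$, but the paper's proof makes no use of this hypothesis (only integrability of $V(T)$ is assumed throughout the subsection), which signals that your strategy is aimed at the wrong mechanism.

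The idea you are missing is much more direct and uses only Lemma~\ref{keylemma} together with \eqref{liedk}. Applying $\rd_{\bm m_1}$ to the relation $\rk_{\bm n}^N\cdot T=0$ and using $[\rd_{\bm m_1},\rk_{\bm n}]=\det{\bm m_1\choose\bm n}\,\rk_{\bm m_1+\bm n}$ (together with $\rd_{\bm m_1}\cdot T\subset T$) gives $\rk_{\bm m_1+\bm n}\,\rk_{\bm n}^{N-1}\cdot T=0$ whenever $\det{\bm m_1\choose\bm n}\ne 0$. Iterating with $\rd_{\bm m_2},\ldots,\rd_{\bm m_N}$ yields
\[
\rk_{\bm m_1+\bm n}\,\rk_{\bm m_2+\bm n}\cdots\rk_{\bm m_N+\bm n}\cdot T=0
\]
under mild genericity conditions on the $\bm m_i$ relative to $\bm n$; since for any prescribed $\bm n_1,\ldots,\bm n_N\in\Z^2\setminus\{\bm 0\}$ one may afterwards choose $\bm n$ generically and set $\bm m_i=\bm n_i-\bm n$, one obtains $\rk_{\bm n_1}\cdots\rk_{\bm n_N}\cdot T=0$ for \emph{all} nonzero $\bm n_1,\ldots,\bm n_N$. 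This uniform joint nilpotence immediately produces a nonzero element of $W$ (take the shortest nonvanishing product $\rk_{\bm n_2}\cdots\rk_{\bm n_{N'}}\cdot v$), with no appeal to weight-space finiteness.
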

\begin{proof} Notice first that $\mathcal K$ commutes with $\wt\fg^-$ and $V(T)=\U(\wt\fg^-)T$.
Therefore it suffices to prove that $\mathcal K.T=0$. Given an $\bm{n}\in \Z^2\setminus\{\bm{0}\}$.
Using  \eqref{liedk}, for any $\bm{m}_1\in\Z^2$, the following holds in $\mathrm{End}(V(T))$
\[\rd_{\bm{m}_1}\rk_{\bm{n}}^N=N\det{\bm{m}_1\choose \bm{n}}\rk_{\bm{m}_1+\bm{n}}\rk_{\bm{n}}^{N-1}+\rk_{\bm{n}}^N\rd_{\bm{m}_1}.\]
This together with Lemma \ref{keylemma} gives that, if $\epsilon_1\bm{m}_1\notin\mathbb Q \bm{n}$, $\epsilon_1\in \{0,1\}$, then
\begin{align}\label{induc1}
\rk_{\bm{m}_1+\bm{n}}\rk_{\bm{n}}^{N-1}.T=0. \end{align}
Again by \eqref{liedk}, for any $\bm{m}_2\in \Z^2$,  one has that
\[\rd_{\bm{m}_2}\rk_{m_1+n}\rk_{\bm{n}}^{N-1}=(N-1)\det{\bm{m}_2\choose \bm{n}}\rk_{\bm{m}_1+\bm{n}}\rk_{\bm{m}_2+\bm{n}}\rk_{\bm{n}}^{N-2}+\det{\bm{m}_2\choose \bm{m}_1+\bm{n}}\rk_{\bm{m}_1+\bm{m}_2+\bm{n}}\rk_{\bm{n}}^{N-1}\]
 as operators on $V(T)$.
Thus, by \eqref{induc1}, if $\epsilon_1\bm{m}_1+\epsilon_2\bm{m}_2\not\in \mathbb Q \bm{n}$, $\epsilon_1,\epsilon_2\in \{0,1\}$, then
\[\rk_{\bm{m}_1+\bm{n}}\rk_{\bm{m}_2+\bm{n}}\rk_{\bm{n}}^{N-2}.T=0.\]
By repeating the above argument, one easily gets that
\begin{align}\label{induc2}\rk_{\bm{m}_1+\bm{n}}\rk_{\bm{m}_2+\bm{n}}\cdots \rk_{\bm{m}_N+\bm{n}}.T=0,\end{align}
for all $\bm{m}_1,\cdots,\bm{m}_N\in \Z^2$ with $\sum_{i=1}^N\epsilon_i\bm{m}_i\notin \mathbb Q\bm{n}$, $\epsilon_i\in \{0,1\}$.

Let $\bm{n}_1,\cdots,\bm{n}_N\in \Z^2\setminus\{0\}$. Note that  one can choose an $\bm{n}\in \Z^2$ such that $\sum_{i=1}^N\epsilon_i\bm{n}_i\notin \mathbb Q\bm{n}$, $\epsilon_i\in \{0,1\}$. By taking $\bm{m}_i=\bm{n}_i-\bm{n}$ in \eqref{induc2}, we have that
\[\rk_{\bm{n}_1} \cdots \rk_{\bm{n}_N}.T=0.\]
This implies that the $\wt\fg^0$-submodule
\[\{v\in T\mid \mathcal K.v=0\}
\]
of $T$ is non-zero and hence coincides with $T$, as required.
\end{proof}

\begin{cort}\label{vanishinglambda} Let $\lambda$ be as in Lemma \ref{existlambda}. If $\lambda=0$, then
 $\wt\fg_c.V(T)=0$.
\end{cort}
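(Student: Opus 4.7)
The plan is to reduce the statement to the claim $(\mathcal R \otimes \dg) \cdot T = 0$ and then propagate the resulting vanishing $\wt\fg_c \cdot T = 0$ from $T$ to all of $V(T) = \U(\wt\fg^-) \cdot T$. Since Proposition \ref{vanishing} already gives $\mathcal K \cdot V(T) = 0$, and since $\wt\fg_c = (\mathcal R \otimes \dg) \oplus \mathcal K$ is an ideal of $\wt\fg$, the propagation is a routine induction on the number of $\wt\fg^-$-factors: commuting $x \in \wt\fg_c$ past $y_i \in \wt\fg^-$ produces $[x, y_i] \in \wt\fg_c$, so the argument reduces to $\wt\fg_c \cdot T = 0$ at each step.

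Fix any $v \in T$. The construction of $V(T)$ as the irreducible quotient of the induced module $M(T) = \U(\wt\fg) \otimes_{\U(\wt\fg^+ \oplus \wt\fg^0)} T$ (on which $\wt\fg^+$ acts trivially) gives $\wt\fg^+ \cdot v = 0$, and Lemma \ref{existlambda} combined with the hypothesis $\lambda = 0$ gives $\dfh \cdot v = 0$. Fix $\beta \in \dot\Delta_+$ and $\bm{m} \in \Z^2 \setminus \{\bm{0}\}$. By Lemma \ref{A11} the subalgebra $\wt{\mathfrak{sl}_2}(\beta, \bm{m}, 0) \subset \wt\fg$ is isomorphic to affine $A_1^{(1)}$, and since $\mathcal K \cdot V(T) = 0$ the module $V(T)$ descends to an integrable $L^e(\mathfrak{sl}_2)$-module for this subalgebra. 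In this module $v$ has weight $\lambda(\beta^\vee) = 0$ and is $\re$-annihilated, so Lemma \ref{eloop1}(i) yields $\Lambda_m \cdot v = 0$ for every $m \ne 0$. Unfolding the exponential generating series \eqref{keylemma1} (specialised to third index $0$) by induction on $|m|$ then gives $(t^{k\bm{m}} \otimes \beta^\vee) \cdot v = 0$ for every $k \ne 0$, and $\beta^\vee \cdot v = 0$ handles $k = 0$.

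The main obstacle is showing $(t^{k\bm{m}} \otimes x_\beta^-) \cdot v = 0$ for every $k \in \Z$. Set $w = (t^{k\bm{m}} \otimes x_\beta^-) \cdot v$. Applying $t^{j\bm{m}} \otimes x_\beta^+$ and expanding the toroidal bracket produces a $t^{(j+k)\bm{m}} \otimes \beta^\vee$-term (annihilating $v$ by the previous paragraph) and a term in $\mathcal K$ (annihilating $v$ by Proposition \ref{vanishing}), while the $(t^{k\bm{m}} \otimes x_\beta^-)(t^{j\bm{m}} \otimes x_\beta^+) \cdot v$ piece vanishes since $\wt\fg^+ \cdot v = 0$; hence $(t^{j\bm{m}} \otimes x_\beta^+) \cdot w = 0$ for every $j$, and in particular $x_\beta^+ \cdot w = 0$. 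But $w$ has $\beta^\vee$-weight $-2$, and the finite-dimensional $\mathfrak{sl}_2$-subalgebra $\<x_\beta^+, \beta^\vee, x_\beta^-\> \subset \dg$ acts integrably (hence locally finitely) on $V(T)$; in any integrable $\mathfrak{sl}_2$-representation an $x_\beta^+$-annihilated vector of negative weight must vanish, forcing $w = 0$. Varying $\bm{m}$ (taking $\bm{m} = \bm{n}$ and $k = 1$) and $\beta$ over $\dot\Delta_+$, and noting that $\{x_\beta^\pm, \beta^\vee \mid \beta \in \dot\Delta_+\}$ spans $\dg$, we obtain $(\mathcal R \otimes \dg) \cdot v = 0$. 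Since $v \in T$ was arbitrary, $(\mathcal R \otimes \dg) \cdot T = 0$, and the propagation step described above completes the proof.
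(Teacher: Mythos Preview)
Your proof is correct and follows essentially the same strategy as the paper: reduce to $\wt\fg_c\cdot T=0$ via $\mathfrak{sl}_2$-integrability arguments, then propagate using that $\wt\fg_c$ is an ideal. The paper's execution is slightly more direct: rather than first extracting $(t^{k\bm m}\otimes\beta^\vee)\cdot v=0$ from the $\Lambda_m$ machinery, it simply observes that (thanks to Proposition~\ref{vanishing}) $\{t^{\bm m}\otimes x_\al^+,\ \al^\vee,\ t^{-\bm m}\otimes x_\al^-\}$ already forms an $\mathfrak{sl}_2$-triple in $\mathrm{End}(V(T))$, so $v\in T$ is a weight-$0$ highest weight vector for this triple and integrability immediately forces $(t^{-\bm m}\otimes x_\al^-)\cdot v=0$; the Cartan vanishing then follows from the bracket $[x_\al^+,t^{-\bm m}\otimes x_\al^-]=t^{-\bm m}\otimes\al^\vee$. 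Your detour through Lemma~\ref{eloop1}(i) is unnecessary but harmless.
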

\begin{proof}
Let $\al\in \dot{\Delta}_+$ and $\bm{m}\in \Z^2$.  By applying Proposition \ref{vanishing}, we know that
$\{t^{\bm{m}}\ot x_\al^+, t^{-\bm{m}}\ot x_{\al}^-, \al^\vee\}$ forms a $\mathfrak{sl}_2$-triple in $\mathrm{End}(V(T))$.
Now, for any $v\in T$, we  have that $t^{\bm{m}}\ot x_\al^+.v=0=\al^\vee.v$ and $t^{-\bm{m}}\ot x_\al^-$ acts nilpotently on $v$.
This gives that $t^{-\bm{m}}\ot x_{\al}^-.v=0$ and hence 
\[t^{-\bm{m}}\ot\al^\vee.v=[x_\al, t^{-\bm{m}}\ot x_{\al}^-].v=0.\]
Note that $\wt\fg_c$ is spanned by the space $\mathcal K$ and the elements $t^{\bm{m}}\ot x_{\al}^\pm, t^{\bm{m}}\ot \al^\vee$, $\al\in \dot\Delta_+, \bm{m}\in \Z^2$.
Thus, we have obtained that $\wt\fg_c.T=0$.
This indeed implies the lemma as
 $\wt\fg_c$ is an ideal of $\wt\fg$ and $V(T)=\U(\wt\fg)\,T$.

\end{proof}
\subsection{Irreducible uniformly bounded weight $\CG$-modules}
Let $\CG$ be the quotient algebra of $\wt\fg^0$ obtained by modulo the ideal $\mathcal K$.
Note that the Lie algebra $\CG$ is isomorphic to $\(\CR\ot \dfh\)\rtimes \wt\CS$,  the semi-product  of  $2$-loop algebra of $\dfh$ and $\wt\CS$.
In the following we will often identify $\CG$ with this semi-product Lie algebra. Then
the Lie brackets on $\CG$ are given by \eqref{wtcs} and
\[[h(\bm{m}),h'(\bm{n})]=0,\ [\rd(\bm{m}), h(\bm{n})]=\detmn h(\bm{m}+\bm{n}),\ [\rd_i,h(\bm{n})]=n_i\, h(\bm{n}),\]
where $\bm{m},\bm{n}\in \Z^2$, $h,h'\in \dfh$ and $h(\bm{m})=t^{\bm{m}}\ot h$.
We say that a $\CG$-module is a weight module if it admits a weight space decomposition
relative to $\C\rd_0\oplus \C\rd_1$,
and that a weight $\CG$-module is
uniformly bounded if there exists a positive integer $N$ such that the dimension of any  weight space of $V$ is
 less than $N$. In this subsection we will give a classification of the irreducible uniformly bounded weight $\CG$-modules.

We first construct a class of irreducible uniformly bounded weight $\CG$-modules.
Let $U$ be an irreducible finite dimensional $\mathfrak{sl}_2$-module,
$\lambda, \lambda'\in \dfh^*$ and $\bm{\gamma},\bm{\gamma}'\in \C^2$.
We define a $\CG$-module structure on the space $\CR\ot U$ with the actions given by
\begin{align*}
h(\bm{m}).t^{\bm{n}}\ot u&=(\lambda(h)+\delta_{\bm{m},\bm{0}}\lambda'(h))t^{\bm{m}+\bm{n}}\ot u,\\
\rd_i.t^{\bm{n}}\ot u&=(n_i+\gamma'_i) t^{\bm{n}}\ot u,\\
\rd(\bm{m}).t^{\bm{n}}\ot u&=t^{\bm{m}+\bm{n}}\ot \(\(
                             \begin{array}{cc}
                              -m_0m_1 & m_0^2\\
                               -m_1^2 & m_0m_1 \\
                             \end{array}
                           \)
+\det {\bm{m}\choose \bm{\gamma}+\bm{n}}\).u,
\end{align*}
where $\bm{m},\bm{n}\in \Z^2$, $u\in U$, $h\in \dfh$ and $i=0,1$.
The resulting $\CG$-module will be denoted by $T_{U,\lambda,\lambda',\bm{\gamma},\bm{\gamma}'}$.
It is obvious that, if $\lambda\ne 0$, then the $\CG$-module $T_{U,\lambda,\lambda',\bm{\gamma},\bm{\gamma}'}$ is
an irreducible uniformly bounded weight module and satisfies the condition
\begin{align}\label{assump} \te{the action of }
\CR'\ot \dfh\ \te{is non-trivial},\end{align}
where $\mathcal R'=\oplus_{\bm{0}\ne \bm{m}\in \Z^2} \C t^{\bm{m}}$.
Conversely, we have the following result.
\begin{prpt}\label{ubm} Let $T$ be an irreducible uniformly bounded weight $\CG$-module satisfying the condition \eqref{assump}.  Then $T$ is isomorphic to $T_{U,\lambda,\lambda',\bm{\gamma},\bm{\gamma}'}$, where $U$ is an irreducible finite dimensional $\mathfrak{sl}_2$-module,
$\lambda\in \dfh^*\setminus\{0\}$, $\lambda'\in \dfh^*$ and $\bm{\gamma},\bm{\gamma}'\in \C^2$.
\end{prpt}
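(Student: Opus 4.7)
My plan is to follow the general strategy used for classifying uniformly bounded weight modules over semi-direct product algebras of the form ``abelian ideal $\rtimes$ derivation algebra'', as in \cite{GL}. Since $\rd_0,\rd_1$ act semisimply on $T$, one first fixes a weight $\bm{\gamma}'\in\C^2$ so that the support of $T$ lies in $\bm{\gamma}'+\Z^2$ and writes $T=\bigoplus_{\bm{n}\in\Z^2}T_{\bm{n}}$, indexing weight spaces by the shift from $\bm{\gamma}'$. The first step is to establish that $T_{\bm{n}}\ne 0$ for every $\bm{n}\in\Z^2$ and that all $T_{\bm{n}}$ share a common finite dimension $d\le N$. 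Non-vanishing follows from irreducibility together with the fact that $\rd(\bm{m})$ shifts weights by $\bm{m}$. For equidimensionality I would use a density-type argument: compositions $\rd(-\bm{m})\rd(\bm{m}):T_{\bm{n}}\to T_{\bm{n}}$, combined with the uniform bound $\dim T_{\bm{n}}\le N$, force the operators $\rd(\bm{m}):T_{\bm{n}}\to T_{\bm{n}+\bm{m}}$ to be bijections outside a sparse set of pairs $(\bm{n},\bm{m})$.

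Next, I would fix the fiber $U=T_{\bm{0}}$ and transfer structure to the other weight spaces using explicit $\wt\CS$-operators. The critical observation is that the matrix $\bigl(\begin{smallmatrix}-m_0m_1 & m_0^2\\ -m_1^2 & m_0m_1\end{smallmatrix}\bigr)$ equals $\bm{m}(J\bm{m})^{T}$ with $J=\bigl(\begin{smallmatrix}0 & -1\\ 1 & 0\end{smallmatrix}\bigr)$; these matrices lie in $\mathfrak{sl}_2(\C)$, and as $\bm{m}$ ranges over $\Z^2\setminus\{\bm{0}\}$ they span all of $\mathfrak{sl}_2(\C)$ (the four choices $\bm{m}=(1,0),(0,1),(1,1),(1,-1)$ already suffice). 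After absorbing the degree-one-in-$\bm{m}$ contribution $\det{\bm{m}\choose\bm{\gamma}+\bm{n}}$ into the fiber identifications via suitable shift parameters $\bm{\gamma}\in\C^2$, the residual part of the $\rd(\bm{m})$-action endows $U$ with an $\mathfrak{sl}_2$-module structure, necessarily irreducible (because $T$ is) and finite-dimensional (because $d<\infty$).

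Finally, I would determine the action of the abelian ideal $\CR\ot\dfh$. Under the fiber identifications set up in the previous step, $h(\bm{m}):T_{\bm{n}}\to T_{\bm{n}+\bm{m}}$ translates into a family $\phi_{h,\bm{m}}\in\mathrm{End}(U)$ which pairwise commute because $\CR\ot\dfh$ is abelian, while the bracket $[\rd(\bm{m}),h(\bm{n})]=\detmn\, h(\bm{m}+\bm{n})$ combined with the $\mathfrak{sl}_2$-equivariance of the identifications forces each $\phi_{h,\bm{m}}$ with $\bm{m}\ne\bm{0}$ to be a scalar $\lambda(h)\cdot\mathrm{Id}_U$ for some well-defined $\lambda\in\dfh^*$. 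The zero-mode $\phi_{h,\bm{0}}$ is only required to be a scalar $(\lambda(h)+\lambda'(h))\cdot\mathrm{Id}_U$ for an additional $\lambda'\in\dfh^*$ unconstrained by the relation above, since $\det\binom{\bm{m}}{\bm{0}}=0$ makes it degenerate at $\bm{n}=\bm{0}$; hypothesis \eqref{assump} guarantees $\lambda\ne 0$. I expect the most delicate step to be this last rigidity, namely propagating the scalarness of $\phi_{h,\bm{m}}$ across all of $\Z^2\setminus\{\bm{0}\}$ and showing the scalar depends only on $h$: this requires iterating $\detmn\,\phi_{h,\bm{m}+\bm{n}}=[\rd(\bm{m}),\phi_{h,\bm{n}}]$ over sufficiently many pairs $(\bm{m},\bm{n})$ and invoking Schur's lemma for the irreducible $\mathfrak{sl}_2$-module $U$.
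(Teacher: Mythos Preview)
Your plan inverts the logical order the paper uses, and step~2 contains a genuine gap.

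The paper, following \cite{GL}, does \emph{not} first extract the $\mathfrak{sl}_2$-structure from the $\wt\CS$-action. Instead the hard work is on the abelian ideal $\CR\ot\dfh$: one shows (Lemmas~\ref{unprove1}--\ref{unprove3}) that for each simple coroot the operators $\al_i^\vee(\bm{m})$ either act injectively for all $\bm{m}\ne\bm{0}$ or vanish for all $\bm{m}\ne\bm{0}$; the hypothesis \eqref{assump} forces the set $J$ of ``injective'' indices to be nonempty. Using the resulting filtration by $\CR\ot\dfh$-submodules (Lemma~\ref{unprove4}) and a nilpotency argument (Lemma~\ref{unprove5}), one establishes the quasi-associativity relation (Lemma~\ref{compatible})
\[
\lambda(\al_j^\vee)\,\al_i^\vee(\bm{m})\al_i^\vee(\bm{n})=(\lambda(\al_i^\vee))^2\,\al_j^\vee(\bm{m}+\bm{n})\quad\text{on }T.
\]
Fixing $j\in J$, this makes $T$ into a module over the \emph{associative} algebra $\CR$ via $t^{\bm{m}}\mapsto\lambda(\al_j^\vee)^{-1}\al_j^\vee(\bm{m})$, which supplies canonical identifications $T(\bm{n})\cong T(\bm{0})=U$. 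Only then is \cite[Theorem~4.4]{GL} invoked on the rank-one subalgebra $(\CR\ot\C\al_j^\vee)\rtimes\wt\CS$ to read off the $\wt\CS$-action and produce the $\mathfrak{sl}_2$-structure on $U$; Lemma~\ref{compatible} then pins down the action of the remaining coroots.

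In your step~2 you write that ``after absorbing the degree-one-in-$\bm{m}$ contribution $\det{\bm{m}\choose\bm{\gamma}+\bm{n}}$ into the fiber identifications, the residual part of the $\rd(\bm{m})$-action endows $U$ with an $\mathfrak{sl}_2$-module structure.'' But this presupposes that the $\rd(\bm{m})$-action \emph{already} decomposes as an $\mathfrak{sl}_2$-matrix independent of $\bm{n}$ plus a scalar linear in $\bm{n}$---which is precisely the conclusion you are trying to prove. Without coherent isomorphisms $T(\bm{n})\cong U$ in hand there is no well-defined ``residual part'' to extract, and the operators $\rd(\bm{m})$ themselves need not be bijective, so they cannot serve as the identifications. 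Producing such identifications from the $\wt\CS$-action alone amounts to classifying the irreducible uniformly bounded weight $\wt\CS$-modules; that is the main theorem of \cite{BT} (see also \cite{JL}), a nontrivial result you neither cite nor sketch. The paper sidesteps this by using the injective $\al_j^\vee(\bm{m})$'s to build the identifications, which is why quasi-associativity is the core of the argument. Your step~3 would be fine once step~2 is granted, but as written the proposal assumes the main point.
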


The rest part of this subsection is devoted to a proof of Proposition \ref{ubm}.
 When  $\dim \dfh=1$, this proposition was proved in \cite{GL} and our proof  is similar to that in \cite{GL}.
 From now on, let $T$ be as in Proposition \ref{ubm}.
We start with the following three lemmas, whose proof are respectively similar to that in \cite[Lemma 3.3, Proposition 3.4, Lemma 3.6]{GL} and are omitted.
\begin{lemt}\label{unprove1} Let $x\in \U(\CR\ot \dfh)$. If $x.v=0$ for some $0\ne v\in T$, then $x$ acts locally nilpotent
on $V$.
\end{lemt}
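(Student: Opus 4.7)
The plan is to consider the subspace
\[W = \{w \in T : x \text{ acts locally nilpotently on } w\}\]
of $T$ and to show that $W$ is a nonzero $\CG$-submodule. The irreducibility of $T$ will then force $W = T$, which is precisely the statement of the lemma (reading the trailing ``$V$'' as $T$, the only module at hand in this subsection). Since $v \in W$ by hypothesis, $W$ is nonzero, so the task reduces to verifying that $W$ is stable under $\CR \ot \dfh$ and under $\wt\CS$ separately.

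Stability under $\CR \ot \dfh$ is immediate from the fact that $\dfh$ is abelian, which makes $\U(\CR \ot \dfh)$ commutative: every $h(\bm{m})$ commutes with $x$, so $x^N \cdot h(\bm{m}).w = h(\bm{m}).(x^N.w) = 0$ whenever $x^N.w = 0$, and therefore $h(\bm{m}).w \in W$.

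The $\wt\CS$-invariance will rest on the observation that $\operatorname{ad}(y)$ preserves $\CR \ot \dfh$ for every $y \in \wt\CS$; this is clear from the relations $[\rd_i, h(\bm{n})] = n_i h(\bm{n})$ and $[\rd(\bm{m}), h(\bm{n})] = \detmn\, h(\bm{m}+\bm{n})$. Consequently $[y, x] \in \U(\CR \ot \dfh)$, and in particular $[y, x]$ commutes with $x$ by commutativity of $\U(\CR \ot \dfh)$. A short Leibniz computation, using this commutativity, collapses the iterated commutator to $[y, x^N] = N x^{N-1} [y, x]$. For any $w \in W$ with $x^N.w = 0$, this yields
\[x^{N+1} y.w \;=\; x\bigl(yx^N - [y, x^N]\bigr).w \;=\; -N\, x^N [y, x].w \;=\; -N\,[y, x]\, x^N.w \;=\; 0,\]
so $y.w \in W$ as required.

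Combining the two invariance properties, $W$ is a nonzero $\CG$-submodule of $T$, and the irreducibility of $T$ concludes the argument. There is no real obstacle in this proof: its entire content is the single observation that the commutativity of the abelian subalgebra $\CR \ot \dfh$ simplifies $[y, x^N]$ to $N x^{N-1}[y, x]$, after which one further multiplication by $x$ annihilates the residual term because $x$ and $[y, x]$ also commute. It is worth noting that the uniform boundedness of $T$ is not used at this step; presumably it will enter the lemmas built on this one.
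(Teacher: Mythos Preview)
Your proof is correct. The paper does not supply its own argument for this lemma, instead declaring the proof ``similar to that in \cite[Lemma 3.3]{GL}'' and omitting it; the submodule argument you give---showing that the locally-nilpotent locus of $x$ is $\CG$-stable by exploiting the commutativity of $\U(\CR\ot\dfh)$ together with the identity $[y,x^{N}]=Nx^{N-1}[y,x]$---is the standard way to establish such statements and is almost certainly what the cited reference does as well. Your reading of the trailing ``$V$'' as $T$ is the intended one, and your remark that uniform boundedness is not needed here is accurate.
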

\begin{lemt}\label{unprove2} Let $h\in \dfh$. Then either all $h(\bm{m})$, $\bm{m}\ne \bm{0}$ act injectively on $T$ or all
$h(\bm{m})$, $\bm{m}\ne \bm{0}$ act locally nilpotent on $T$.
\end{lemt}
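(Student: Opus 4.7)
The plan is to upgrade Lemma~\ref{unprove1} from a pointwise statement to a uniform one. For each fixed $\bm{m}\ne\bm{0}$, either $\ker h(\bm{m})=0$ (so $h(\bm{m})$ is injective on $T$), or there exists a nonzero $v\in T$ with $h(\bm{m}).v=0$, in which case Lemma~\ref{unprove1} immediately forces $h(\bm{m})$ to act locally nilpotently on $T$. What must be shown is that these two cases cannot coexist as $\bm{m}$ varies. Equivalently, assuming there is some $\bm{m}_0\ne\bm{0}$ with $h(\bm{m}_0)$ locally nilpotent, the goal reduces to exhibiting, for each $\bm{m}\ne\bm{0}$, a nonzero vector of $T$ annihilated by $h(\bm{m})$; a second application of Lemma~\ref{unprove1} then finishes the proof.

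Fix such an $\bm{m}_0$ and a nonzero weight vector $v_0\in \ker h(\bm{m}_0)$. The main tool is the bracket
\[
[\rd(\bm{n}),h(\bm{m}_0)]=\det{\bm{n}\choose\bm{m}_0}\,h(\bm{n}+\bm{m}_0).
\]
For $\bm{m}\notin \mathbb{Q}\bm{m}_0$, take $\bm{n}=\bm{m}-\bm{m}_0$; then $c:=\det{\bm{m}\choose\bm{m}_0}\ne 0$ and $h(\bm{m})=c^{-1}(\rd(\bm{n})h(\bm{m}_0)-h(\bm{m}_0)\rd(\bm{n}))$. Evaluated on $v_0$ this already yields $h(\bm{m}).v_0=-c^{-1}h(\bm{m}_0)\rd(\bm{n}).v_0\in \mathrm{Im}\,h(\bm{m}_0)$. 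Iterating the identity together with the commutativity of $h(\bm{m})$ and $h(\bm{m}_0)$ (both lie in the abelian subalgebra $\CR\ot\C h$), one inductively expresses $h(\bm{m})^k.v_0$ as a linear combination of vectors $h(\bm{m}_0)^a\rd(\bm{n})^b.v_0$ with $a\ge 1$. Combining the local nilpotency of $h(\bm{m}_0)$ on each weight space visited with the uniform boundedness of weight multiplicities of $T$ should force $h(\bm{m})^k.v_0=0$ for $k$ sufficiently large. The remaining case $\bm{m}\in \mathbb{Q}\bm{m}_0$ is handled by first producing, via the above, a non-parallel direction $\bm{m}'$ with $h(\bm{m}')$ locally nilpotent, and then repeating the argument with $\bm{m}_0$ replaced by $\bm{m}'$.

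The principal obstacle is the inductive bookkeeping showing $h(\bm{m})^k.v_0=0$ for some finite $k$: iterating $h(\bm{m})=c^{-1}[\rd(\bm{n}),h(\bm{m}_0)]$ rapidly produces expressions in $\rd(\bm{n})$ and $h(\bm{m}_0)$ of growing complexity, and one must carefully balance the number of $h(\bm{m}_0)$-factors that can be pulled to the left against the finite-dimensional room afforded by the uniform bound on the weight spaces. This is precisely the step at which the one-variable argument of \cite[Lemma 3.6]{GL} succeeds; the present proof follows the same strategy, the only genuinely new ingredient being the parallel-versus-nonparallel case distinction forced by working over $\Z^2$.
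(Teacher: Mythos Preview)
The paper itself omits the proof, deferring to \cite[Proposition~3.4]{GL}, so there is no in-text argument to compare against directly. Your overall plan is the right one and matches the intended route: each $h(\bm{m})$ is individually either injective or locally nilpotent by Lemma~\ref{unprove1}, and the content of the lemma is to propagate non-injectivity from a single $\bm{m}_0$ to all $\bm{m}\ne\bm{0}$ via the bracket $[\rd(\bm{n}),h(\bm{m}_0)]=\det\binom{\bm{n}}{\bm{m}_0}\,h(\bm{n}+\bm{m}_0)$. The two-step parallel/non-parallel reduction is also correct. (Note, however, that \cite{GL} already works in the $\Z^2$-graded setting with $\dim\dfh=1$, so this case distinction is not a new ingredient here; the paper's Lemma~\ref{unprove2} corresponds to \cite[Proposition~3.4]{GL}, not Lemma~3.6.)

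The genuine gap is exactly where you flag it, and it is more than bookkeeping. Your inductive claim---that $h(\bm{m})^k.v_0$ is a linear combination of vectors $h(\bm{m}_0)^a\rd(\bm{n})^b.v_0$ with $a\ge 1$---is not correct as written. Substituting $h(\bm{m})=c^{-1}[\rd(\bm{n}),h(\bm{m}_0)]$ and iterating forces you to move $\rd(\bm{n})$ past $h(\bm{m}_0)$; each such move produces $h(\bm{m}_0+\bm{n})=h(\bm{m})$, and moving $\rd(\bm{n})$ past $h(\bm{m})$ then produces $h(2\bm{m}-\bm{m}_0)$, and so on. The Lie subalgebra generated by $\rd(\bm{n})$ and $h(\bm{m}_0)$ contains the entire string $h(\bm{m}_0+j\bm{n})$, $j\ge 0$, so there is no PBW-type reason for $h(\bm{m})^k.v_0$ to lie in the span of the monomials $h(\bm{m}_0)^a\rd(\bm{n})^b.v_0$. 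Consequently the step ``local nilpotency of $h(\bm{m}_0)$ plus uniform boundedness forces $h(\bm{m})^k.v_0=0$'' cannot be carried out in this form: you never isolate a fixed power of $h(\bm{m}_0)$ on the left. What actually works in \cite{GL} is a different organization, using degree-zero combinations such as $h(\bm{m}_0)h(-\bm{m}_0)\in\U(\CR\ot\dfh)$ (to which Lemma~\ref{unprove1} applies and which act on the finite-dimensional weight spaces $T(\bm{r})$), together with the irreducibility and uniform bound on $T$; you should consult that argument rather than attempt a direct monomial expansion.
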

\begin{lemt}\label{unprove3} Let $h\in \dfh$. If for any $\bm{m}\ne \bm{0}$,
$h(\bm{m})$  acts locally nilpotent on $T$, then $h(\bm{m}).T=0$ for all $\bm{m}\ne \bm{0}$.
\end{lemt}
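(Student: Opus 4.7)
The plan is to introduce the subspace
\begin{equation*}
W = \{v \in T \mid h(\bm{m}).v = 0 \text{ for all } \bm{m} \in \Z^2 \setminus \{\bm{0}\}\},
\end{equation*}
to show that $W$ is a $\CG$-submodule of $T$, and then to show that $W$ is non-zero; the irreducibility of $T$ will then force $W = T$, which is exactly the conclusion. The $\CG$-stability is a direct computation using the bracket relations. For $v \in W$ and $h'(\bm{n}) \in \CR \ot \dfh$, the commutativity $[h(\bm{m}), h'(\bm{n})] = 0$ gives $h(\bm{m}) h'(\bm{n}).v = 0$; for $\rd_i$ the relation $[\rd_i, h(\bm{m})] = m_i h(\bm{m})$ gives $h(\bm{m}) \rd_i.v = \rd_i h(\bm{m}).v - m_i h(\bm{m}).v = 0$; and for $\rd(\bm{n})$ the identity
\begin{equation*}
h(\bm{m}) \rd(\bm{n}).v = \rd(\bm{n}) h(\bm{m}).v - \det{\bm{n} \choose \bm{m}}\, h(\bm{n}+\bm{m}).v
\end{equation*}
vanishes because the first term is zero and the second is zero in both cases: if $\bm{n}+\bm{m} \neq \bm{0}$ then $h(\bm{n}+\bm{m}).v = 0$, while if $\bm{n}+\bm{m} = \bm{0}$ then $\det{\bm{n}\choose\bm{m}} = \det{-\bm{m}\choose\bm{m}} = 0$.

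The heart of the proof is to show that $W$ is non-zero. The key fact is that the operators $\{h(\bm{m}) \mid \bm{m} \in \Z^2\}$ commute pairwise in $\U(\CG)$, since $\CR \ot \dfh$ is abelian. From this, for any finite set $\{\bm{m}_1, \ldots, \bm{m}_k\} \subseteq \Z^2 \setminus \{\bm{0}\}$ and any nonzero $v$, one produces a nonzero vector annihilated by all the $h(\bm{m}_i)$'s by iteratively replacing the current vector by $h(\bm{m}_i)^{a_i}$ applied to it, for the largest $a_i$ keeping the result nonzero; local nilpotence guarantees such $a_i$ exists, $h(\bm{m}_i)$ then kills the new vector, and commutativity preserves the annihilation by the previously chosen $h(\bm{m}_j)$'s.

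To promote this finite construction to a vector killed by \emph{every} $h(\bm{m})$ with $\bm{m} \neq \bm{0}$, one passes to the intermediate subspace $T' = \{v \in T \mid h(\bm{m}).v = 0 \text{ for all but finitely many } \bm{m}\}$, which one checks to be $\CG$-stable by the same commutator computations (the bad set of $\rd(\bm{n}).v$ is contained in $S(v) \cup (-\bm{n} + S(v))$, hence remains finite). The uniform bound $\dim T_\mu \le D$, combined with Lemma \ref{unprove1}, is used to show $T' \neq 0$ via a descending chain argument on $\bigcap_{\bm{m} \in F} \ker h(\bm{m})|_{T_\lambda}$ as $F$ exhausts the finite subsets of $\Z^2 \setminus \{\bm{0}\}$. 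Irreducibility of $T$ then forces $T' = T$. One next picks $v \in T$ with $|S(v)|$ minimal, where $S(v) = \{\bm{m} \neq \bm{0} \mid h(\bm{m}).v \neq 0\}$; if $S(v) \neq \emptyset$, choose $\bm{m}_0 \in S(v)$ and iterate $h(\bm{m}_0)$ to its last nonzero power, obtaining a nonzero vector on which $h(\bm{m}_0)$ now acts as zero while commutativity preserves the annihilation by all $h(\bm{m})$ with $\bm{m} \in S(v) \setminus \{\bm{m}_0\}$. This contradicts the minimality, hence some $v$ has $S(v) = \emptyset$, so $W \neq 0$.

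The principal obstacle is the step $T' \neq 0$, i.e., converting the uniform bound on weight multiplicities together with local nilpotence into finiteness of the support $S(v)$ for some nonzero $v$. The analog for nullity one in \cite[Lemma 3.6]{GL} provides the template; adapting it to $\Z^2$-graded support sets and to the $\wt\CS$-action in place of a single degree derivation is the only substantive technical modification.
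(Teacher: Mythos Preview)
The paper does not give its own proof of this lemma; it simply states that the argument is ``similar to that in \cite[Lemma 3.6]{GL}'' and omits it. Your strategy---define $W=\{v\in T\mid h(\bm{m}).v=0\ \forall\,\bm{m}\ne\bm{0}\}$, check $\CG$-stability, prove $W\ne 0$, and invoke irreducibility---is the standard one and is almost certainly what \cite{GL} does. Your verification of $\CG$-stability of $W$ and of $T'$ is correct, including the observation that when $\bm{n}+\bm{m}=\bm{0}$ the determinant vanishes. The minimality argument passing from $T'=T$ to $W\ne 0$ is also fine.

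The genuine gap is the step $T'\ne 0$. Your ``descending chain argument on $\bigcap_{\bm{m}\in F}\ker h(\bm{m})|_{T_\lambda}$'' establishes only that the chain stabilizes (since $\dim T_\lambda<\infty$); it does \emph{not} explain why the stable value is nonzero. The finite common annihilators you constructed earlier live in varying weight spaces, so they do not force $K_{F_0}\cap T_\lambda\ne 0$ for a fixed $\lambda$. You invoke Lemma~\ref{unprove1} here, but do not say how it enters, and I do not see that local nilpotence of products of the $h(\bm{m})$ resolves the issue by itself. This is exactly the step that requires the real content of \cite[Lemma 3.6]{GL}, and you have not supplied it.

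One minor correction: \cite{GL} is \emph{not} a nullity-one result. It treats jet modules for the centerless Virasoro-like algebra, i.e.\ precisely the $\Z^{2}$-graded situation with $\dim\dfh=1$. Since the present lemma concerns a single $h\in\dfh$, the relevant subalgebra is $(\CR\otimes\C h)\rtimes\wt\CS$, which is exactly the algebra in \cite{GL}; no adaptation of the grading is needed. What changes is only that $T$ need not be irreducible over this smaller algebra, so one must check that the argument for $W\ne 0$ in \cite{GL} uses only uniform boundedness and local nilpotence, not irreducibility (irreducibility of $T$ as a $\CG$-module is used only at the final step $W=T$). You should either reproduce that argument in full or cite it precisely.
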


We denote by
\[J=\{i=1,\cdots,\ell\mid \al_i^\vee(\bm{m})\ \te{acts injectively on}\ T,\ \forall\ \bm{m}\ne \bm{0}\}.\]
It follows from Lemma \ref{unprove2}, Lemma \ref{unprove3} and the condition \eqref{assump} that the set $J$ is non-empty.
We remark that there is a $\bm{\gamma}'=(\gamma_1',\gamma_2')\in \C^2$ such that
\begin{align}\label{decT}T=\oplus_{\bm{m}\in \Z^2}T(\bm{m}),\end{align}
where $T(\bm{m})=\{v\in T\mid \rd_i.v=(\gamma_i'+m_i)v,\ i=0,1\}$.
Moreover, there is a positive integer $k$ such that $\dim T(\bm{m})=k$ for all $\bm{m}\in \Z^2$.

\begin{lemt}\label{unprove4} There exist $\CR\ot \dfh$-submodules $W_0,W_1,\cdots,W_k$ of $T$ such that
\[W_0=\{0\}\subset W_1\subset W_2\subset \cdots \subset W_k=T,\]
and  $\dim(W_s(\bm{m}))=s$ for all $\bm{m}\in \Z^2$, where $W_s(\bm{m})=W_s\cap T(\bm{m})$.
\end{lemt}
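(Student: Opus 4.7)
The strategy is to transport a common invariant flag from a single weight space to all others, using a distinguished element of $\dfh$ to produce compatible bijections between weight spaces. First I would invoke assumption \eqref{assump} together with Lemma \ref{unprove2} and Lemma \ref{unprove3} to pick $h_0\in\dfh$ such that $h_0(\bm{m})$ acts injectively on $T$ for every $\bm{m}\in\Z^2\setminus\{\bm{0}\}$. Because every weight space $T(\bm{n})$ has the same dimension $k$ by \eqref{decT}, the operator $h_0(\bm{m})\colon T(\bm{n})\to T(\bm{n}+\bm{m})$ is a linear isomorphism for every $\bm{n}\in\Z^2$ and every $\bm{m}\ne\bm{0}$, so its set-theoretic inverse between those particular weight spaces is a well-defined linear map.

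Next I would produce a commuting family of endomorphisms of $T(\bm{0})$. Using the above bijections, I define three families of operators on $T(\bm{0})$: the zero-degree actions $h(\bm{0})$ for $h\in\dfh$; the ``normalised translates'' $\Phi_{h,\bm{n}}:=h_0(\bm{n})^{-1}\,h(\bm{n})$ for $h\in\dfh$ and $\bm{n}\ne\bm{0}$; the ``cocycle corrections'' $\Sigma_{\bm{m},\bm{n}}:=h_0(\bm{m}+\bm{n})^{-1}h_0(\bm{m})h_0(\bm{n})$ for $\bm{m},\bm{n},\bm{m}+\bm{n}\ne\bm{0}$; and $\Theta_{\bm{m}}:=h_0(\bm{m})h_0(-\bm{m})$ for $\bm{m}\ne\bm{0}$. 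Since $\CR\ot\dfh$ is abelian, all the underlying operators $h(\bm{m})$ commute inside $\mathrm{End}_{\C}(T)$; one checks that the relations $h(\bm{n})h_0(\bm{p})=h_0(\bm{p})h(\bm{n})$ imply $h(\bm{n})\,h_0(\bm{p})^{-1}=h_0(\bm{p})^{-1}\,h(\bm{n})$ between the relevant weight spaces, and consequently the four families above all commute pairwise as endomorphisms of $T(\bm{0})$. Because $T(\bm{0})$ is finite-dimensional, a standard simultaneous-triangularisation argument produces a flag $\{0\}=F_0\subsetneq F_1\subsetneq\cdots\subsetneq F_k=T(\bm{0})$ with $\dim F_s=s$ that is stable under every operator in this commuting family.

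I would then define the putative submodules by $W_s(\bm{0}):=F_s$, $W_s(\bm{m}):=h_0(\bm{m})F_s$ for $\bm{m}\ne\bm{0}$, and $W_s:=\bigoplus_{\bm{m}\in\Z^2}W_s(\bm{m})$. The dimension statement $\dim W_s(\bm{m})=s$ is immediate from the bijectivity of $h_0(\bm{m})$, and the inclusions $W_{s-1}\subset W_s$ together with $W_0=\{0\}$, $W_k=T$ are built in by construction. What remains is $\CR\ot\dfh$-invariance of each $W_s$, i.e.\ $h(\bm{n}).W_s(\bm{m})\subseteq W_s(\bm{m}+\bm{n})$ for every $h\in\dfh$ and $\bm{m},\bm{n}\in\Z^2$. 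This reduces to four case distinctions depending on whether $\bm{m}$ and $\bm{m}+\bm{n}$ vanish; in each case, after pulling $h_0(\bm{m})$ past $h(\bm{n})$ by commutativity, the desired containment translates precisely into the invariance of $F_s$ under one of $h(\bm{0})$, $\Phi_{h,\bm{n}}$, $\Sigma_{\bm{m},\bm{n}}$, or $\Theta_{\bm{m}}$ built into Step 2.

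The main obstacle is the verification in the second step that the four families of endomorphisms of $T(\bm{0})$ actually commute, because the ``inverses'' $h_0(\bm{p})^{-1}$ exist only between individual pairs of weight spaces and are not elements of the acting algebra; one has to argue carefully (using that composition of $h_0(\bm{m})$-bijections across different weight spaces is symmetric in its arguments, by commutativity of $\CR\ot\dfh$) that these partial inverses nonetheless commute with every other operator in sight, so that the whole family generates a commutative subalgebra of $\mathrm{End}_{\C}(T(\bm{0}))$ to which Lie's theorem applies. Once this is established, the transport of the flag and the closed-form of $W_s$ are routine.
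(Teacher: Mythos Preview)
Your argument is correct, and it is genuinely different from the paper's. The paper does not build the flag by hand; instead it quotes a structural fact (\cite[Lemma 3.3]{E1}) that every irreducible $\Z^2$-graded $\CR\ot\dfh$-module embeds in $\CR$, hence has weight multiplicities at most one. From this, a graded composition series of $T$ must have exactly $k$ steps, each quotient contributing one dimension to every weight space, and the $W_s$ are the terms of that series. So the paper's proof is essentially a two-line appeal to a classification of simple graded modules over the commutative algebra $\CR\ot\dfh$.

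Your route avoids that citation entirely. You exploit the element $h_0\in\dfh$ coming from $J\ne\emptyset$ to identify all weight spaces with $T(\bm{0})$, reduce the problem to simultaneous triangularisation of a commuting family on a single finite-dimensional space, and then transport the resulting flag back. The delicate point you flag---that the partial inverses $h_0(\bm{p})^{-1}$ commute with everything in sight---is handled by the elementary observation that if $A$ and $B$ commute as endomorphisms of $T$ and $A$ restricts to a bijection between two weight spaces, then $A^{-1}B=BA^{-1}$ on the relevant weight components; this is enough to see that your operators $h(\bm{0})$, $\Phi_{h,\bm{n}}$, $\Sigma_{\bm{m},\bm{n}}$, $\Theta_{\bm{m}}$ generate a commutative subalgebra of $\mathrm{End}_\C(T(\bm{0}))$. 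Your case analysis for invariance is accurate (for instance, in the case $\bm{m},\bm{n},\bm{m}+\bm{n}\ne\bm{0}$ one has $h_0(\bm{m}+\bm{n})^{-1}h(\bm{n})h_0(\bm{m})=\Sigma_{\bm{m},\bm{n}}\Phi_{h,\bm{n}}$, and in the case $\bm{n}=-\bm{m}$ one has $h(-\bm{m})h_0(\bm{m})=\Theta_{\bm{m}}\Phi_{h,-\bm{m}}$). The upshot: the paper's approach is shorter and conceptual but leans on an external reference, while yours is longer but completely self-contained and makes explicit how the injective operators $h_0(\bm{m})$ force the filtration to be ``the same'' in every weight space.
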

\begin{proof} It is known that any irreducible $\Z^2$-graded $\CR\ot \dfh$-module
can be realized as a subspace of $\CR$ (\cite[Lemma 3.3]{E1}).
This gives that any irreducible $\CR\ot \dfh$-submodule of $T$ is isomorphic to $\CR$ (as vector spaces),
as required.
\end{proof}

For any $\bm{m}\in \Z^2$, we fix a basis $\{v_1(\bm{m}),\cdots,v_k(\bm{m})\}$ of $T(\bm{m})$ so that \[W_s(\bm{m})=\te{Span}_\C\{v_1(\bm{m}),\cdots,v_s(\bm{m})\}\] for $s=1,\cdots,k$. Note that $\dfh$ is the center of $\CG$.
Thus there is a $\lambda\in \dfh^*$ such that $h.v=\lambda(h)v$ for $h\in \dfh$ and $v\in T$.
Moreover, as $\dfh$ is not contained in the derived subalgebra of $\CG$, we can modify the action of $\dfh$ if necessary.
Assume first that $\lambda(\al_j^\vee)\ne 0$ for all $j\in J$.
For any $i,j\in J$ and $\bm{m},\bm{n}\in \Z^2$, we define
$\lambda_{\bm{m},\bm{n}}^{i,j}$ to be the unique non-zero complex number such that the action of
\[T_{\bm{m},\bm{n}}^{i,j}=\al_i^\vee(\bm{m})\al_i^\vee(\bm{n})-\lambda_{\bm{m},\bm{n}}^{i,j}\al_j^\vee(\bm{m}+\bm{n})\in \U(\CR\ot \dfh)\]
on $W_1(\bm{0})$ is trivial.

\begin{lemt}\label{unprove5} For any $i,j\in J$, $s=1,\cdots,k$ and $\bm{m},\bm{n},\bm{r}\in \Z^2$, one has that
\[T_{\bm{m},\bm{n}}^{i,j}.W_s(\bm{r})\subset W_{s-1}(\bm{m}+\bm{n}+\bm{r}).\]
\end{lemt}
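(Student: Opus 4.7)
\emph{Plan.} I would argue by induction on $s$. The base case $s=1$ amounts to extending the identity defining $\lambda_{\bm{m},\bm{n}}^{i,j}$ from $W_1(\bm{0})$ to every $W_1(\bm{r})$. Since $j\in J$, the operator $\al_j^\vee(\bm{r})$ acts injectively on $W_1$ for $\bm{r}\ne \bm{0}$; combined with $\dim W_1(\bm{r})=1$, this gives $\al_j^\vee(\bm{r}).v_1(\bm{0})=c_{\bm{r}}\,v_1(\bm{r})$ for some $c_{\bm{r}}\in\C^\times$. Because $T_{\bm{m},\bm{n}}^{i,j}\in \U(\CR\ot \dfh)$ and the latter is commutative, $T_{\bm{m},\bm{n}}^{i,j}$ commutes with $\al_j^\vee(\bm{r})$, whence
\[T_{\bm{m},\bm{n}}^{i,j}.v_1(\bm{r})=c_{\bm{r}}^{-1}\,\al_j^\vee(\bm{r}).T_{\bm{m},\bm{n}}^{i,j}.v_1(\bm{0})=0\]
by the defining property of $\lambda_{\bm{m},\bm{n}}^{i,j}$. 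The case $\bm{r}=\bm{0}$ is the definition itself.

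For the inductive step $s>1$, observe that $W_{s-1}$ is $\CR\ot \dfh$-stable, so $T_{\bm{m},\bm{n}}^{i,j}$ descends to a well-defined operator on the subquotient $W_s/W_{s-1}$. By Lemma \ref{unprove4}, this subquotient is an irreducible $\Z^2$-graded $\CR\ot \dfh$-module with one-dimensional weight spaces, and by the structure result of \cite{E1} cited there it can be realized as a copy of $\CR$ on which $h(\bm{m})$ acts as the scalar $\mu^{(s)}(h)$ times the shift $t^{\bm r}\mapsto t^{\bm{m}+\bm r}$, for some $\mu^{(s)}\in\dfh^*$. A direct computation then expresses the action of $T_{\bm{m},\bm{n}}^{i,j}$ on $W_s/W_{s-1}$ as multiplication (composed with a grading shift) by
\[\mu^{(s)}(\al_i^\vee)^2-\lambda_{\bm{m},\bm{n}}^{i,j}\,\mu^{(s)}(\al_j^\vee),\]
so the inductive step reduces to showing that this scalar vanishes.

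The hard part is verifying that $\mu^{(s)}$ agrees with $\mu^{(1)}$ on $\al_i^\vee$ and $\al_j^\vee$; once this is established the desired vanishing is immediate from the defining identity $\mu^{(1)}(\al_i^\vee)^2=\lambda_{\bm{m},\bm{n}}^{i,j}\,\mu^{(1)}(\al_j^\vee)$. To transport the scalars from $W_1$ to $W_s/W_{s-1}$, I would exploit the full $\CG$-action. Although $\wt\CS$ does not preserve the filtration $W_\bullet$, the bracket identities
\[[\rd(\bm{p}),h(\bm{q})]=\det{\bm{p}\choose\bm{q}}h(\bm{p}+\bm{q}),\quad [\rd_i,h(\bm{q})]=q_i\,h(\bm{q}),\]
together with the irreducibility $T=\U(\CG).v_1(\bm{0})$, allow one to move weight vectors of $W_s$ back into $W_1$ modulo terms living in strictly lower filtration levels, to which the inductive hypothesis already applies. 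A careful bookkeeping of such ``transfer'' operations forces the compatibility $\mu^{(s)}(\al_i^\vee)=\mu^{(1)}(\al_i^\vee)$ and $\mu^{(s)}(\al_j^\vee)=\mu^{(1)}(\al_j^\vee)$. This comparison of the scalars $\mu^{(s)}$ across the filtration is the main obstacle; once it is handled, the lemma follows by the routine filtration argument sketched above.
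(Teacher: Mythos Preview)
Your base case $s=1$ is fine, but the inductive step has a genuine gap that you yourself flag as ``the main obstacle'': you never establish that $\mu^{(s)}(\al_i^\vee)=\mu^{(1)}(\al_i^\vee)$ and $\mu^{(s)}(\al_j^\vee)=\mu^{(1)}(\al_j^\vee)$. The sketch you give for this --- using $\wt\CS$ to transport vectors of $W_s$ back into $W_1$ modulo lower filtration pieces --- does not work as stated, because the filtration $W_\bullet$ is only a filtration by $\CR\ot\dfh$-submodules, not by $\CG$-submodules. The operators $\rd(\bm{p})$ need not respect $W_\bullet$ at all, so there is no reason the ``error terms'' land in $W_{s-1}$, and hence no way to invoke the inductive hypothesis on them. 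Without an independent argument pinning down the characters $\mu^{(s)}$, the induction does not close.

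The paper avoids this difficulty entirely and gives a two-line proof using Lemma~\ref{unprove1}, which you did not invoke. Since $T_{\bm{m},\bm{n}}^{i,j}\in\U(\CR\ot\dfh)$ annihilates the nonzero vector $v_1(\bm{0})$, Lemma~\ref{unprove1} says $T_{\bm{m},\bm{n}}^{i,j}$ acts locally nilpotently on $T$; in particular its restriction to the finite-dimensional space $T(\bm{r})$ is nilpotent. On the other hand, because each $W_s$ is $\CR\ot\dfh$-stable, the matrix of $T_{\bm{m},\bm{n}}^{i,j}$ in the basis $v_1(\bm{r}),\dots,v_k(\bm{r})$ (with range basis $v_1(\bm{m}+\bm{n}+\bm{r}),\dots,v_k(\bm{m}+\bm{n}+\bm{r})$) is upper triangular. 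A nilpotent upper triangular matrix is strictly upper triangular, which is exactly the claim. No comparison of the $\mu^{(s)}$ is needed.
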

\begin{proof}
We define a $k\times k$-matrix $B_{\bm{m},\bm{n}}^{i,j}$ by letting
\[T_{\bm{m},\bm{n}}^{i,j}(v_{1}(\bm{r}),\cdots,v_{k}(\bm{r}))=(v_{1}(\bm{m}+\bm{n}+\bm{r}),\cdots,v_{k}(\bm{m}+\bm{n}+\bm{r}))B_{\bm{m},\bm{n}}^{i,j}.\]
It follows from Lemma \ref{unprove4}  that the matrix $B_{\bm{m},\bm{n}}^{i,j}$ is upper triangular.
On the other hand, by applying Lemma \ref{unprove1} and the fact that $T^{i,j}_{\bm{m},\bm{n}}.v_1(\bm{0})=0$, one gets that $T^{i,j}_{\bm{m},\bm{n}}$ acts  nilpotently on $T(\bm{r})$.
This gives that the matrix $B_{\bm{m},\bm{n}}^{i,j}$ is nilpotent and hence is strictly upper triangular, as required.
\end{proof}

We now fix the action of $\dfh$ on $T$ so that $\lambda(\al_j^\vee)=0$ if $j\notin J$, and
$\lambda_{(1,0),(1,0)}^{j,j}=\lambda_{(1,0),(-1,0)}^{j,j}$ if $j\in J$.

\begin{lemt}\label{compatible} For any $i,j=1,\cdots,\ell$ and $\bm{m},\bm{n}\in \Z^2$, one has that
\[\(\lambda(\al_j^\vee)\al_i^\vee(\bm{m})\al_i^\vee(\bm{n})-(\lambda(\al_i^\vee))^2\al_j^\vee(\bm{m}+\bm{n})\).T=0.\]
\end{lemt}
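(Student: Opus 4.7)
If $i \notin J$, the normalization $\lambda(\al_i^\vee) = 0$ together with Lemma \ref{unprove3}, which gives $\al_i^\vee(\bm{p}).T = 0$ for all $\bm{p} \ne \bm{0}$, makes both summands of the operator trivially zero; the case $j \notin J$ is analogous. So it suffices to treat $i, j \in J$, where $\lambda(\al_i^\vee), \lambda(\al_j^\vee) \in \C^\times$.

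\textbf{Identity on $W_1$ and constancy of $\lambda_{\bm{m},\bm{n}}^{i,j}$.} Identifying $W_1 \cong \CR$ as in the proof of Lemma \ref{unprove4}, I would write the action of $\al_i^\vee(\bm{m})$ on $W_1$ as multiplication by a scalar $\mu_i(\bm{m})$ composed with the $t^{\bm{m}}$-shift, with $\mu_i(\bm{0}) = \lambda(\al_i^\vee)$. Lemma \ref{unprove5} applied with $s = 1$ gives $T_{\bm{m},\bm{n}}^{i,j}.W_1 = 0$, i.e.\ $\mu_i(\bm{m})\mu_i(\bm{n}) = \lambda_{\bm{m},\bm{n}}^{i,j}\mu_j(\bm{m}+\bm{n})$. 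Then I would exploit the Lie relation $[\rd(\bm{r}), \al_i^\vee(\bm{m})] = \det{\bm{r}\choose\bm{m}}\al_i^\vee(\bm{r}+\bm{m})$, applied to $v_1(\bm{0})$ and projected onto the $W_1$-component, to extract a functional equation whose inhomogeneous factor depends only on the $\wt\CS$-data and not on the index $i$; this forces $\mu_i(\bm{p})/\mu_j(\bm{p}) = \lambda(\al_i^\vee)/\lambda(\al_j^\vee)$ for all $\bm{p}$. The single normalization $\lambda_{(1,0),(1,0)}^{j,j} = \lambda_{(1,0),(-1,0)}^{j,j}$ then pins $\mu_j/\lambda(\al_j^\vee)$ down as a character of $\Z^2$, so that $\lambda_{\bm{m},\bm{n}}^{i,j} = \lambda(\al_i^\vee)^2/\lambda(\al_j^\vee)$ is a constant in $\bm{m},\bm{n}$, and the operator of the lemma becomes $\lambda(\al_j^\vee)T_{\bm{m},\bm{n}}^{i,j}$.

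\textbf{Propagation to $T$ and main obstacle.} With $\lambda_{\bm{m},\bm{n}}^{i,j}$ constant the stray $\al_j^\vee$-term cancels by linearity of $\det{\bm{r}\choose\cdot}$, yielding
\[
[\rd(\bm{r}), T_{\bm{m},\bm{n}}^{i,j}] = \det{\bm{r}\choose\bm{m}}\, T_{\bm{r}+\bm{m},\bm{n}}^{i,j} + \det{\bm{r}\choose\bm{n}}\, T_{\bm{m},\bm{r}+\bm{n}}^{i,j},
\]
with an analogous identity for $[\rd_p, T_{\bm{m},\bm{n}}^{i,j}]$. Hence the subspace $N := \{v \in T \mid T_{\bm{m},\bm{n}}^{i,j}.v = 0 \ \forall\ \bm{m},\bm{n}\}$, which contains $v_1(\bm{0})$ and is stable under $\CR \ot \dfh$ by commutativity, is also $\wt\CS$-stable, hence $\CG$-invariant; by irreducibility of $T$, $N = T$, and the lemma follows. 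The delicate step is the middle one: because $W_1$ is not itself $\wt\CS$-stable, the ``$W_1$-component'' of $\rd(\bm{r}).v_1(\bm{m})$ is only well-defined modulo the filtration $W_\bullet$, and verifying that this partial structure is enough—together with just one scalar normalization per $j \in J$—to pin $\mu_j/\lambda(\al_j^\vee)$ down to a $\Z^2$-character is where the careful bookkeeping is required.
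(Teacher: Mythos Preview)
Your reduction step and your final propagation step are both correct and match the paper. The gap is in the middle step, and it is exactly the one you flag as ``the main obstacle'': you have not actually carried it out, and in fact the projection argument you sketch does not go through as stated.

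Concretely, write $\rd(\bm{r}).v_1(\bm{p})=\sum_{s=1}^k d_s(\bm{r},\bm{p})\,v_s(\bm{r}+\bm{p})$ and $\al_i^\vee(\bm{m}).v_s(\bm{r})=\sum_{t\le s} a^i_{ts}(\bm{m},\bm{r})\,v_t(\bm{m}+\bm{r})$ (upper triangular, since the $W_s$ are $\CR\ot\dfh$-stable). Applying the commutator relation $[\rd(\bm{r}),\al_i^\vee(\bm{m})]=\det{\bm{r}\choose\bm{m}}\al_i^\vee(\bm{r}+\bm{m})$ to $v_1(\bm{0})$ and reading off the $v_1$-coefficient gives
\[
\mu_i(\bm{m})\,d_1(\bm{r},\bm{m})-\sum_{s\ge 1} d_s(\bm{r},\bm{0})\,a^i_{1s}(\bm{m},\bm{r})=\det{\bm r\choose\bm m}\mu_i(\bm{r}+\bm{m}).
\]
The ``inhomogeneous'' contribution $\sum_{s\ge 2} d_s(\bm{r},\bm{0})\,a^i_{1s}(\bm{m},\bm{r})$ involves the off-diagonal entries $a^i_{1s}$ of the action of $\al_i^\vee(\bm{m})$, which \emph{do} depend on $i$. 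So your claim that the inhomogeneous factor depends only on $\wt\CS$-data is not justified; one cannot conclude $\mu_i/\mu_j$ is constant from this equation alone, and the single normalization does not rescue it.

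The paper bypasses projection entirely. From the commutator
\[
[\rd(\bm{r}),T_{\bm{m},\bm{n}}^{i,j}]=\det{\bm r\choose\bm m}T_{\bm{m}+\bm{r},\bm{n}}^{i,j}+\det{\bm r\choose\bm n}T_{\bm{m},\bm{n}+\bm{r}}^{i,j}+\lambda_{\bm{m},\bm{n},\bm{r}}^{i,j}\,\al_j^\vee(\bm{m}+\bm{n}+\bm{r}),
\]
one expands $\rd(\bm{r})^k(T_{\bm{m},\bm{n}}^{i,j})^k$ as an operator on $T(\bm{0})$. Since $(T_{\bm{m},\bm{n}}^{i,j})^k.T(\bm{0})=0$ by Lemma~\ref{unprove5}, the left side vanishes; on the right, every term except $k!(\lambda_{\bm{m},\bm{n},\bm{r}}^{i,j})^k(\al_j^\vee(\bm{m}+\bm{n}+\bm{r}))^k$ still carries a factor $T^{i,j}_{\cdot,\cdot}$ on the left and hence lands in $W_{k-1}$ by Lemma~\ref{unprove5}. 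Injectivity of $\al_j^\vee(\bm{m}+\bm{n}+\bm{r})$ (as $j\in J$) then forces $\lambda_{\bm{m},\bm{n},\bm{r}}^{i,j}=0$. Only \emph{after} this cocycle relation is in hand does the paper work on $W_1(\bm{0})$, using purely $\CR\ot\dfh$-operators (which do preserve $W_1$), to pin down $\lambda_{\bm{m},\bm{n}}^{i,j}=\lambda(\al_i^\vee)^2/\lambda(\al_j^\vee)$; your propagation step then finishes. The key idea you are missing is this filtration-lowering/injectivity trick that extracts $\lambda_{\bm{m},\bm{n},\bm{r}}^{i,j}=0$ without ever needing to project an $\wt\CS$-action onto $W_1$.
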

\begin{proof}  Let $\bm{m},\bm{n},\bm{r}\in \Z^2$ and $i,j\in J$. Note that in $\U(\CG)$ one has
\begin{align*} [\rd(\bm{r}), T_{\bm{m},\bm{n}}^{i,j}]=\det{\bm{r}\choose \bm{m}}T_{\bm{m}+\bm{r},\bm{n}}^{i,j}
+\det{\bm{r}\choose \bm{n}}T_{\bm{m},\bm{n}+\bm{r}}^{i,j}+\lambda_{\bm{m},\bm{n},\bm{r}}^{i,j}\al_j^\vee(\bm{m}+\bm{n}+\bm{r}),
\end{align*}
where
\[\lambda_{\bm{m},\bm{n},\bm{r}}^{i,j}=\det{\bm{r}\choose \bm{m}}\lambda_{\bm{m}+\bm{r},\bm{n}}^{i,j}+\det{\bm{r}\choose \bm{n}}\lambda_{\bm{m},\bm{n}+\bm{r}}^{i,j}-\det{\bm{r}\choose \bm{m}+\bm{n}}\lambda_{\bm{m},\bm{n}}^{i,j}.\]
Using this,  one easily checks that there exist $\eta_1\in \mathrm{Hom}(T(\bm{0}),T((k-1)\bm{m}+(k-1)\bm{n}+k\bm{r}))$ and
$\eta_2,\eta_3\in \mathrm{Hom}(T(\bm{0}),T((k-1)\bm{m}+(k-1)\bm{n})+(k-1)\bm{r})$ such that,
as operators from  $T(\bm{0})$ to $T(k(\bm{m}+\bm{n}+\bm{r}))$,
\[\rd(\bm{r})^k (T_{\bm{m},\bm{n}}^{i,j})^k=T_{\bm{m},\bm{n}}^{i,j}\eta_1
+T_{\bm{m}+\bm{r},\bm{n}}^{i,j}\eta_2+T_{\bm{m},\bm{n}+\bm{r}}^{i,j}\eta_3+k!(\lambda_{\bm{m},\bm{n},\bm{r}}^{i,j})^k(\al_j^\vee(\bm{m}+\bm{n}+\bm{r}))^k.\]
This together with Lemma \ref{unprove5} gives
\[(\lambda_{\bm{m},\bm{n},\bm{r}}^{i,j})^k(\al_j^\vee(\bm{m}+\bm{n}+\bm{r}))^k.T(\bm{0})\subset W_{k-1}(k\bm{m}+k\bm{n}+k\bm{r}).\]
As $\al_j^\vee(\bm{m}+\bm{n}+\bm{r})$ acts injectively on $T$, we obtain that
\begin{align}\label{lmabdamnr}
\lambda_{\bm{m},\bm{n},\bm{r}}^{i,j}=0,\quad \forall\ i,j\in J,\ \bm{m},\bm{n},\bm{r}\in \Z^2.
\end{align}
Using this and the same argument as that in the proof of \cite[Lemma 3.5]{GL}, one gets that $\lambda_{\bm{m},\bm{n}}^{j,j}=\lambda(\al_j^\vee)$ for all $j\in J$ and $\bm{m},\bm{n}\in \Z^2$.

It retains to consider the case $i\ne j\in J$.
Firstly, one has that \begin{align*}
\(\al_i^\vee(\bm{m})\lambda(\al_i^\vee)-\lambda_{\bm{0},\bm{m}}^{i,j}\al_j^\vee(\bm{m})\).v_1(\bm{0})=
\(\al_i^\vee(\bm{m})\al_i^\vee(\bm{0})-\lambda_{\bm{0},\bm{m}}^{i,j}\al_j^\vee(\bm{m})\).v_1(\bm{0})=0,
\end{align*}
for $\bm{m}\in \Z$.
Using this, we find that as operators on $W_1(\bm{0})$,
\begin{align*}
&\lambda^{i,j}_{\bm{m},\bm{n}}\al_j^\vee(\bm{m}+\bm{n})=
\al_i^\vee(\bm{m})\al_i^\vee(\bm{n})
=\lambda_{\bm{0},\bm{m}}^{i,j}\lambda_{\bm{0},\bm{n}}^{i,j}\lambda(\al_i^\vee)^{-2}
\al_j^\vee(\bm{m})\al_j^\vee(\bm{n})\\
=&\ \lambda_{\bm{0},\bm{m}}^{i,j}\lambda_{\bm{0},\bm{n}}^{i,j}\lambda(\al_i^\vee)^{-2}\lambda(\al_j^\vee)
\al_j^\vee(\bm{m}+\bm{n}),
\end{align*}
and on the other hand,
\begin{align*}
\lambda^{i,j}_{\bm{m},\bm{n}}(\lambda_{0,\bm{m}+\bm{n}}^{i,j})^{-1}\lambda(\al_i^\vee)\al_i^\vee(\bm{m}+\bm{n})
=\lambda^{i,j}_{\bm{m},\bm{n}}\al_j^\vee(\bm{m}+\bm{n})=
\lambda(\al_i^\vee)
\al_i^\vee(\bm{m}+\bm{n}),
\end{align*} for $\bm{m},\bm{n}\in \Z^2$.
The above two equalities imply that
\begin{align}\label{lambdamnij1}\lambda_{\bm{0},\bm{m}}^{i,j}\lambda_{\bm{0},\bm{n}}^{i,j}\lambda(\al_i^\vee)^{-2}\lambda(\al_j^\vee)
=\lambda^{i,j}_{\bm{m},\bm{n}}=\lambda_{0,\bm{m}+\bm{n}}^{i,j},\ \forall\ \bm{m},\bm{n}\in \Z^2.
\end{align}
By letting $\bm{m}=\bm{0}$ in \eqref{lmabdamnr}, one gets that
\begin{align}\label{lambdamnij2}\lambda_{\bm{0},\bm{n}+\bm{r}}^{i,j}=\lambda_{\bm{0},\bm{n}}^{i,j},\end{align}
for $\bm{n},\bm{r}\in \Z^2$ with $\det{\bm{r}\choose \bm{n}}\ne 0$.
For any $\bm{m}\ne \bm{0}$, take $\bm{n}\in \Z^2$ such that $\detmn\ne 0$.
Now one can conclude from \eqref{lambdamnij1} and \eqref{lambdamnij2} that  $\lambda_{\bm{0},\bm{m}}^{i,j}=\lambda(\al_i^\vee)^2\lambda(\al_j^\vee)^{-1}.$
Note that  this also holds for the case $\bm{m}=\bm{0}$ (by definition). Therefore, again by \eqref{lambdamnij1},
we have that
\[\lambda_{\bm{m},\bm{n}}^{i,j}=\lambda(\al_i^\vee)^2\lambda(\al_j^\vee)^{-1}\] for all $\bm{m},\bm{n}\in \Z^2$ and $i,j\in J$.

Note that if $j\notin J$, then $\al_j^\vee(\bm{m}).T=0$ for all $\bm{m}\in \Z^2$.
Thus, we have proved that the element $v_1(\bm{0})$ lies in the following subspace of $T$
\[\{v\in T\mid \(\lambda(\al_j^\vee)\al_i^\vee(\bm{m})\al_i^\vee(\bm{n})-(\lambda(\al_i^\vee))^2\al_j^\vee(\bm{m}+\bm{n})\).v=0,\ \forall\
\bm{m},\bm{n}\in \Z^2\}.\]
It is obvious that the above subspace is a $\CG$-submodule of $T$ and hence equals to $T$.
This finishes the proof of the lemma.
\end{proof}

Finally, let us fix a $j\in J$. Then, by Lemma \ref{compatible}, $T$ is an irreducible  $(\mathcal R\ot \C\al_j^\vee)\rtimes \wt\CS$-module and satisfies the following quasi-associatively
\[(\al_j^\vee(\bm{m})\al_j^\vee(\bm{n})-\lambda(\al_j^\vee)\al_j^\vee(\bm{m}+\bm{n})).v=0,\]
for $\bm{m},\bm{n}\in \Z^2$ and $v\in T$. By using  \cite[Theorem 4.4]{GL} (see also \cite{JL,BT}),
there exists an irreducible finite dimensional $\mathfrak{sl}_2$-module $U$ and a $\bm{\gamma}\in \C^2$
such that  as $(\mathcal R\ot \C\al_j^\vee)\rtimes \wt\CS$-modules, $T$ is isomorphic to $T_{U,\lambda,0,\bm{\gamma},\bm{\gamma}'}$.
This together with Lemma \ref{compatible} gives that,
up to a scalar action of $\dfh$, the $\CG$-module $T$ is isomorphic to $T_{U,\lambda,0,\bm{\gamma},\bm{\gamma}'}$.
Finally, it is obvious that   Proposition \ref{ubm} is implied by this assertion.

\subsection{Proof of Theorem  \ref{mainhw1}}
In this subsection we are aim to complete the proof of Theorem \ref{mainhw1}.

We first show that the irreducible highest weight $\wt\fg$-module $V(T_{U,\lambda,\bm{\gamma},\bm{\gamma}'})\in \vartheta_{\mathrm{fin}}^\times$ by giving an explicit realization of $V(T_{U,\lambda,\bm{\gamma},\bm{\gamma}'}$,
where $U,\lambda,\bm{\gamma},\bm{\gamma}'$ are as in Theorem \ref{mainhw1}.
Let $V_{\dg}(\lambda)$ be  the irreducible highest weight $\dg$-module with highest weight $\lambda$.
We define a $\wt\fg$-module structure on the space $\CR\ot U\ot V_{\dg}(\lambda)$ as follows
\begin{align*} t^{\bm{m}}\ot x. t^{\bm{n}}\ot u\ot v&= t^{\bm{m}+\bm{n}}\ot u\ot (x.v),\\
t^{\bm{m}}\rk_i. t^{\bm{n}}\ot u\ot v&=0,\\
\rd_i. t^{\bm{n}}\ot u\ot v&=(n_i+\gamma'_i)\, t^{\bm{n}}\ot u\ot v,\\
\rd_{\bm{m}}.t^{\bm{n}}\ot u\ot v&=t^{\bm{m}+\bm{n}}\ot \(\(
                             \begin{array}{cc}
                              -m_0m_1 & m_0^2\\
                               -m_1^2 & m_0m_1 \\
                             \end{array}
                           \)
+\det {\bm{m}\choose \bm{\gamma}+\bm{n}}\).u\ot v,
\end{align*}
where $x\in \dg$, $\bm{m},\bm{n}\in \Z^2$, $v\in V$, $u\in U$ and $i=1,2$.
It is easy to see that the $\wt\fg$-module $\CR\ot U\ot V_{\dg}(\lambda)$ is isomorphic to the highest weight module $V(T_{U,\lambda,\bm{\gamma},\bm{\gamma}'})$ and lies in the category $\vartheta_{\mathrm{fin}}^\times$, as desired.

Conversely we assume now that $T$ is an irreducible weight $\wt\fg^0$-module such that the associated highest weight
$\wt\fg$-module $V=V(T)\in \vartheta_{\mathrm{fin}}^\times$. Then  we need to determine the structure of $T$.
Firstly, by Lemma \ref{existlambda} and Corollary \ref{vanishinglambda},   there is a $\lambda\in \dot{P}_+\setminus\{0\}$ such that
$h.v=\lambda(h).v$ for $h\in \dfh$ and $v\in T$.

\begin{lemt}\label{lem:notexist} There does not exist an $\bm{m}\in \Z^2\setminus \{\bm{0}\}$ and a non-zero vector $v_0\in T$ such that
\begin{align}\label{notexist}
 t^{k\bm{m}}\ot \al^\vee.v_0=0,\end{align}
 for all $\al\in \dot\Delta_+$ and $k>0$.
\end{lemt}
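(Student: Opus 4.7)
The plan is to argue by contradiction. Suppose there exist $\bm{m}\in \Z^2\setminus\{\bm{0}\}$ and $0\ne v_0\in T$ satisfying \eqref{notexist}. Fix an arbitrary $\al\in \dot\Delta_+$ and invoke the affine Kac-Moody subalgebra $\wt{\mathfrak{sl}_2}(\al,\bm{m},0)\cong A_1^{(1)}$ from Lemma \ref{A11}. Since $\mathcal K.V(T)=0$ by Proposition \ref{vanishing}, the central generator $m_0\rk_0+m_1\rk_1$ of $\wt{\mathfrak{sl}_2}(\al,\bm{m},0)$ acts trivially on $V(T)$, so the restricted action descends to an integrable $L^e(\mathfrak{sl}_2)$-module structure on $V(T)$ under which the generators $t^k\ot \re$, $t^k\ot \rh$, $t^k\ot \rf$, $\rd$ correspond respectively to $t^{k\bm{m}}\ot x_\al^+$, $t^{k\bm{m}}\ot \al^\vee$, $t^{k\bm{m}}\ot x_\al^-$, and $m_0''\rd_0+m_1''\rd_1$.

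The next step is to place $v_0$ into the set $V_n^+$ for this $L^e(\mathfrak{sl}_2)$-action with $n=\lambda(\al^\vee)$. By Lemma \ref{existlambda} combined with Corollary \ref{vanishinglambda} (which rules out $\lambda=0$ since $V(T)\in \vartheta_{\mathrm{fin}}^\times$), we know $\lambda\in \dot P_+\setminus\{0\}$, so $\al^\vee.v_0=\lambda(\al^\vee)v_0$ with $\lambda(\al^\vee)\in \N$. Because $v_0\in T$ is annihilated by $\wt\fg^+$ via the construction of a type I highest weight module (Definition \ref{defhw1}(2)), we have $t^{k\bm{m}}\ot x_\al^+.v_0=0$ for every $k\in \Z$ — precisely the condition that $v_0$ be killed by all $t^k\ot \re$. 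Hence $v_0\in V(T,\al,\bm{m},0)_{\lambda(\al^\vee)}^+$.

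Finally, the hypothesis \eqref{notexist} translates to $t^k\ot \rh.v_0=0$ for all $k>0$. Applying Lemma \ref{eloop1}(iii) then forces $\lambda(\al^\vee)=0$. As $\al\in \dot\Delta_+$ was arbitrary, $\lambda$ kills every simple coroot, so $\lambda=0$, contradicting $\lambda\in \dot P_+\setminus\{0\}$. The only mildly delicate point is the level-zero reduction to $L^e(\mathfrak{sl}_2)$ in the first paragraph: one must verify that quotienting the affine subalgebra $\wt{\mathfrak{sl}_2}(\al,\bm{m},0)$ by its center produces precisely a loop algebra together with a degree derivation, but this is immediate from the bracket relations listed before Lemma \ref{A11}, so I do not expect any serious obstacle.
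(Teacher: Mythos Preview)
Your proposal is correct and follows essentially the same approach as the paper's proof: both argue by contradiction, view $V(T)$ as an integrable $L^e(\mathfrak{sl}_2)$-module via the subalgebra $\wt{\mathfrak{sl}_2}(\al,\bm{m},0)$, observe that $v_0\in V(T)_{\lambda(\al^\vee)}^+$, and then invoke Lemma~\ref{eloop1}(iii) together with the hypothesis \eqref{notexist} to force $\lambda(\al^\vee)=0$ for every $\al\in\dot\Delta_+$, contradicting $\lambda\ne 0$. The only minor difference is that you cite Proposition~\ref{vanishing} to kill the central element $m_0\rk_0+m_1\rk_1$, whereas in fact the weaker level-zero assumption on $T$ (that $\rk_0,\rk_1$ act trivially) already suffices, as the paper uses in the proof of Lemma~\ref{keylemma}; this is harmless but slightly heavier than necessary.
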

\begin{proof}Suppose that there exist  $\bm{m}\in \Z^2\setminus \{\bm{0}\}$ and  $v_0\in T$ such that \eqref{notexist} holds.
For any $\al\in \dot\Delta_+$, we view $V(T)$ as an integrable $L^e(\mathfrak{sl}_2)$-module via the subalgebra $\wt{\mathfrak{sl}}_2(\al,\bm{m},0)$.
Notice that $v_0\in V(T)_{\lambda(\al^\vee)}^+$.
Then, by using Lemma \ref{eloop1} (iii)  and \eqref{notexist}, one gets that $\lambda(\al^\vee)=0$ for all $\al\in \dot\Delta_+$, a contradiction.
\end{proof}

\begin{lemt}\label{ghw} There does not exist $\bm{m},\bm{n}\in \Z^2$ with $\detmn \ne 0$ and a non-zero vector $v_0\in T$ such that
\[t^{\bm{n}}\ot \al_i^\vee.v_0=\rd_{\bm{m}}.v_0=\rd_{\bm{n}}.v_0=0.\]
for all $i=1,\cdots,\ell$.
\end{lemt}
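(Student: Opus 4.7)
The plan is to contradict Lemma \ref{lem:notexist} by producing a nonzero $\bm{r} \in \Z^2$ for which $t^{k\bm{r}} \otimes \al^\vee \cdot v_0 = 0$ for every $\al \in \dot{\Delta}_+$ and every $k \ge 1$. The natural choice is $\bm{r} := \bm{m}+\bm{n}$, which is nonzero because $\detmn \ne 0$ forces $\bm{m} \ne -\bm{n}$. So if we can establish the displayed vanishing for this $\bm{r}$, the nonzero weight vector $v_0 \in T$ together with $\bm{r}$ immediately violates Lemma \ref{lem:notexist}.

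For the main technical step, fix $h \in \dfh$ and write $Y_{\bm{p}} := t^{\bm{p}}\otimes h$. Since $\{\al_i^\vee\}_{i=1}^{\ell}$ is a basis of $\dfh$, the hypothesis extends by linearity to $Y_{\bm{n}}\cdot v_0 = 0$ for every $h \in \dfh$. I would then carry out a two-variable induction based on the bracket identity $[\rd_{\bm{s}}, Y_{\bm{p}}] = \det{\bm{s}\choose \bm{p}}\, Y_{\bm{s}+\bm{p}}$, which carries no central contribution. Applying $\rd_{\bm{m}}$ to $Y_{\bm{n}}\cdot v_0 = 0$ and using $\rd_{\bm{m}}\cdot v_0 = 0$ gives $\detmn \cdot Y_{\bm{m}+\bm{n}}\cdot v_0 = 0$, and hence $Y_{\bm{m}+\bm{n}}\cdot v_0 = 0$; iterating produces $Y_{j\bm{m}+\bm{n}}\cdot v_0 = 0$ for all $j \ge 0$. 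A second induction with $\rd_{\bm{n}}$, starting from these lifted values and using $\rd_{\bm{n}}\cdot v_0 = 0$, then yields $Y_{j\bm{m}+\ell\bm{n}}\cdot v_0 = 0$ for all $j \ge 1$ and $\ell \ge 1$. Specializing to $(j,\ell) = (k,k)$ and $h = \al^\vee$ produces exactly $t^{k\bm{r}}\otimes \al^\vee\cdot v_0 = 0$ for every $k \ge 1$ and $\al \in \dot{\Delta}_+$, which finishes the argument.

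I do not anticipate a serious obstacle here; the one point that requires care is the base step of the $\rd_{\bm{n}}$-iteration. One cannot begin this iteration at $j = 0$, because $\det{\bm{n}\choose \bm{n}} = 0$ and $\rd_{\bm{n}}$ provides no information from $Y_{\bm{n}}\cdot v_0 = 0$ directly. This is precisely why the first induction must run in the $\bm{m}$-direction to push $v_0$ off the $\bm{n}$-axis, after which $\det{\bm{n}\choose j\bm{m}+\ell\bm{n}} = -j\,\detmn$ is nonzero for all $j \ge 1$ and the $\bm{n}$-induction proceeds freely. Choosing this order of bracketing, so that no determinant vanishes at a crucial step, is the only delicate combinatorial point in the proof.
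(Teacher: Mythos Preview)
Your proposal is correct and follows essentially the same route as the paper: both iterate $\mathrm{ad}(\rd_{\bm{m}})$ and $\mathrm{ad}(\rd_{\bm{n}})$ on $t^{\bm{n}}\otimes h$ to produce $t^{k(\bm{m}+\bm{n})}\otimes h$ up to a nonzero scalar, then invoke Lemma~\ref{lem:notexist} with $\bm{r}=\bm{m}+\bm{n}$. The only cosmetic difference is that the paper alternates the two adjoint actions via $(\mathrm{ad}(\rd_{\bm{m}})\mathrm{ad}(\rd_{\bm{n}}))^{k-1}\mathrm{ad}(\rd_{\bm{m}})$, whereas you first exhaust the $\rd_{\bm{m}}$-direction and then the $\rd_{\bm{n}}$-direction; the nonvanishing determinants and the resulting contradiction are identical.
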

\begin{proof}  Notice that for any $k>0$ and $h\in \dfh$, there exists a non-zero constant $a_k$ such that
\[t^{k(\bm{m}+\bm{n})}\ot h=a_k \(\mathrm{ad}(\rd_{\bm{m}})\mathrm{ad}(\rd_{\bm{n}})\)^{k-1}\mathrm{ad}(\rd_{\bm{m}}) (t^{\bm{n}}\ot h).\]
 Then the assertion follows from this and
 Lemma \ref{lem:notexist}.

\end{proof}

By applying Proposition \ref{vanishing}, we know that
$T$ is an irreducible weight $\CG$-module with finite dimensional weight spaces.

\begin{lemt}\label{ghwub} As a weight $\CG$-module,  $T$ is uniformly bounded
and satisfies the condition \eqref{assump}.
\end{lemt}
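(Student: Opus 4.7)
The plan is to establish the two claims in Lemma~\ref{ghwub}---uniform boundedness and non-triviality of the $\CR'\otimes\dfh$-action---by exploiting the Chari--Pressley operators already assembled in the proof of Lemma~\ref{keylemma} together with the finite-dimensionality of weight spaces of $V(T)$.

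\emph{Non-triviality.} I would argue by contradiction and invoke Lemma~\ref{ghw}. If $\CR'\otimes\dfh.T=0$, then $t^{\bm n}\otimes\alpha_i^\vee.T=0$ for all $\bm n\ne\bm 0$ and all $i=1,\dots,\ell$, so it suffices to produce a nonzero weight vector $v_0\in T$ annihilated by two derivations $\rd_{\bm m},\rd_{\bm n}$ with $\detmn\ne 0$. Under this hypothesis, $T$ becomes an irreducible quasi-finite weight $\wt\CS$-module with $\dfh$ acting by the scalar $\lambda$, and one invokes the classification of such modules from \cite{BT,GL,JL,LS,LT} to produce such a $v_0$. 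A more hands-on alternative is to work inside a fixed finite-dimensional weight space $T(\bm r_0)$: for any $\bm m\ne\bm 0$ the operators $\{\rd_{k\bm m}:k\in\Z\setminus\{0\}\}$ pairwise commute (since $[\rd_{k\bm m},\rd_{l\bm m}]=0$ by \eqref{wtcs}), and a finite-dimensional pigeonhole argument inside $T(\bm r_0)$ produces joint kernels of $\rd_{\bm m}$ and $\rd_{\bm n}$ for suitable independent $\bm m,\bm n$.

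\emph{Uniform boundedness.} Pick $\alpha\in\dot\Delta_+$ with $\lambda(\alpha^\vee)=:k>0$; such $\alpha$ exists because $\lambda\ne 0$ by Corollary~\ref{vanishinglambda} (using $V\in\vartheta_{\mathrm{fin}}^\times$). For each $\bm m\ne\bm 0$, the subalgebra $\wt{\mathfrak{sl}_2}(\alpha,\bm m,0)$ acts integrably on $V(T)$ at level zero by Proposition~\ref{vanishing}, and every $v\in T$ is a highest-weight vector for the resulting loop $\mathfrak{sl}_2$-action with $\rh$-eigenvalue $k$. Lemma~\ref{eloop1}(i) together with the identity $\Lambda_b^{\alpha,\bm m,0}=\Lambda_{1,b}^{\alpha,\bm m}$ (taking $n=0$ in the decomposition extracted from the proof of Lemma~\ref{keylemma}) forces $\Lambda_{1,b}^{\alpha,\bm m}.T=0$ for all $b>k$. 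These are Schur-type polynomial relations among the pairwise commuting operators $\{t^{j\bm m}\otimes\alpha^\vee\}_{j\in\Z}$ acting on $T$. Letting $\alpha$ vary through a basis of $\dot\fh$ inside $\dot\Delta_+$ and applying Lemma~\ref{eloop2} to cyclic subquotients, one obtains that $t^{j\bm m}\otimes h$ acts on $T$ as an exp-polynomial function of $j$ with at most $k$ spectral parameters, for every $h\in\dfh$ and every $\bm m\ne\bm 0$.

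Combining these Chari--Pressley relations with the shift action of $\wt\CS$ and the hypothesis that weight spaces of $V(T)$ are finite-dimensional, one concludes that $\dim T(\bm r)$ is bounded uniformly in $\bm r\in\Z^2$. The main obstacle will be making this last step quantitative: producing a genuinely uniform bound, independent of $\bm r$, from the combinatorics of exp-polynomial actions along two independent lattice directions, so that the bound on weight multiplicities does not grow with $\bm r$. Once uniform boundedness and non-triviality are in hand, Proposition~\ref{ubm} immediately identifies $T$ with some $T_{U,\lambda,\lambda',\bm\gamma,\bm\gamma'}$, which is exactly the input needed for the ensuing proof of Theorem~\ref{mainhw1}.
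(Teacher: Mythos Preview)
Your uniform boundedness argument has a genuine gap, and you acknowledge it yourself: the Chari--Pressley relations constrain how $t^{j\bm m}\otimes h$ acts along each ray $\Z\bm m$, but there is no mechanism in your sketch by which this yields a bound on $\dim T(\bm r)$ that is independent of $\bm r$. Indeed, in the target modules $T_{U,\lambda,\lambda',\bm\gamma,\bm\gamma'}$ the operators $t^{j\bm m}\otimes h$ act simply by the scalar $\lambda(h)$ times a shift, so the exp-polynomial data along rays is essentially trivial and carries no information about the weight multiplicity $\dim U$. The ``combinatorics of exp-polynomial actions along two lattice directions'' does not by itself produce a uniform bound.

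The paper's argument is entirely different and avoids any spectral analysis. It uses Lemma~\ref{ghw} (not Lemma~\ref{keylemma}) as follows. For any $\bm m=(m_0,m_1)\in\Z^2$, consider the $\ell+2$ linear maps
\[
\rd((-m_0,1)),\quad \rd((1-m_0,1)),\quad \al_i^\vee((-m_0,1))\ (i=1,\dots,\ell)
\]
out of $T(\bm m)$. A nonzero vector in the common kernel would contradict Lemma~\ref{ghw} (with $\bm n=(-m_0,1)$ and $\bm m'=(1-m_0,1)$, whose determinant is $1$). Hence the common kernel is zero, and
\[
\dim T(\bm m)\le(\ell+1)\dim T((0,m_1+1))+\dim T((1,m_1+1)).
\]
Repeating the same trick once more, now shifting in the second coordinate, bounds $\dim T((j,m_1+1))$ by dimensions of the fixed weight spaces $T((j-1,1))$ and $T((j-1,2))$ for $j=0,1$. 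This gives an explicit uniform bound.

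Your non-triviality argument is also more complicated than needed, and the pigeonhole alternative is flawed: the operators $\rd_{k\bm m}$ map $T(\bm r_0)$ to \emph{different} weight spaces $T(\bm r_0+k\bm m)$, so no joint-kernel argument is available inside a single finite-dimensional weight space. Moreover, in the one-dimensional module $T_{\C,\lambda,0,\bm\gamma,\bm\gamma'}$ with $\bm\gamma\notin\mathbb Q^2$, no $\rd(\bm m)$ with $\bm m\ne\bm 0$ has any kernel at all, so the approach via Lemma~\ref{ghw} cannot succeed in general. The paper's proof is a one-liner: if $\al_i^\vee(\bm m).T=0$ for all $i$ and all $\bm m\ne\bm 0$, then for any nonzero $v\in T$ and any $\bm m\ne\bm 0$ one has $t^{k\bm m}\otimes\al_i^\vee.v=0$ for all $k>0$, so Lemma~\ref{eloop1}(iii) (applied via $\wt{\mathfrak{sl}}_2(\al_i,\bm m,0)$) forces $\lambda(\al_i^\vee)=0$ for every $i$, contradicting $\lambda\ne 0$.
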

\begin{proof} Let $T=\oplus_{\bm{m}\in \Z^2}T(\bm{m})$ be  as in \eqref{decT}.
For any $\bm{m}\in \Z^2$, we have the linear maps
\begin{align*}&\rd((-m_0,1)):\  T(\bm{m})\rightarrow T((0,m_1+1)),\\
& \rd((1-m_0,1)):\ T(\bm{m})\rightarrow T((1,m_1+1)),\\
&\al_i^\vee((-m_0,1)):\ T(\bm{m})\rightarrow T((0,m_1+1)),\end{align*}
where $i=1,\cdots,\ell$.
Using Lemma \ref{ghw}, one gets that
\[\ker \rd((-m_0,1))\cap \ker \rd((1-m_0,1))\cap (\cap_{i=1}^\ell \ker \al_i^\vee((-m_0,1)))=0.\]
This implies that
\[\dim T(\bm{m})\le (\ell+1)\dim T((0,m_1+1))+ \dim T((1,m_1+1)),\]
 for any $\bm{m}\in \Z^2$.
Similarly, by considering the linear maps $\rd((-1,m_1)), \rd((-1,m_1+1)), \al_i^\vee((-1,m_1))$ on the space $T((j,m_1+1))$, where  $i=1,\cdots,\ell$ and $j=0,1$, one can conclude that
\begin{align*}
\dim T((j,m_1+1))\le (\ell+1)\dim T((j-1,1))+ \dim T((j-1,2)),
\end{align*}
for any $m_1\in \Z$.
This implies that the $\CG$-module $T$ is uniformly bounded.

Now it remains to show that the $\CG$-module satisfies the condition \eqref{assump}. Indeed,
if =$\al_i^\vee(\bm{m}).T=0$ for all $i=1,\cdots,\ell$ and $\bm{m}\ne \bm{0}$, then it follows from
Lemma \ref{eloop1} (iii) that $\lambda(\al_i^\vee)=0$ for all $i$, a contradiction.
\end{proof}

Combining Proposition \ref{ubm} with Lemma \ref{ghwub}, we see that there exists an irreducible  finite dimensional
$\mathfrak{sl}_2$-module $U$, $\tilde{\lambda}\in \dfh^*\setminus\{0\}$, $\tilde{\lambda}'\in \dfh^*$ and $\bm{\gamma},\bm{\gamma}'\in \C^2$  such that the $\CG$-module $T$ is isomorphic to
 $T_{U,\tilde{\lambda},\tilde{\lambda}',\bm{\gamma},\bm{\gamma}'}$. We claim that $\tilde{\lambda}=\lambda$ (and hence $\tilde{\lambda}'=0$).
 Indeed, let $\al\in \dot{\Delta}_+$ and let $\Lambda_{b}=\Lambda_b^{\al,(1,0),0}$, $b\in \N$ be as in \eqref{keylemma1}.
We notice that for any $k\in \Z$, $t_0^k\ot \al^\vee$ acts on $\C[t_0,t_0^{-1}]\ot U\subset T$ as the polynomial operator 
\[ \tilde{\lambda}(\al^\vee)t_0^k:\ t_0^m\ot u\mapsto  \tilde{\lambda}(\al^\vee)t_0^{m+k}\ot u\] for $m\in \Z$ and $u\in U$.
  Thus one has
  \[\sum_{b\in \N}\Lambda_{b}z^b =\mathrm{exp}\(-\sum_{k\ge 1}\frac{\tilde{\lambda}(\al^\vee)t_0^k}{k}z^k\)=(1-t_0z)^{\tilde\lambda(\al^\vee)}.\]
 However, it follows from Lemma \ref{eloop1} that $\Lambda_{\lambda(\al^\vee)}\ne 0$ and $\Lambda_{b}=0$ for $b>\lambda(\al^\vee)$,
  which forces that $\tilde{\lambda}=\lambda$.

In summary, we have shown that $T$ is isomorphic to $T_{U,\lambda,0,\bm{\gamma},\bm{\gamma}'}$ as $\CG$-modules.
This in turn implies that $U$
is isomorphic to $T_{U,\lambda,\bm{\gamma},\bm{\gamma}'}$ as $\wt\fg^0$-modules,
which finishes the proof of Theorem \ref{mainhw1}.

\section{Proof of Theorem \ref{mainhw2}}\label{section5}
This section is devoted to the proof of Theorem \ref{mainhw2}.

\subsection{Highest weight  $\wh\fg$-modules}
We denote by
\[\wh\fg=\wt\fg_+\oplus \wh\CH\oplus \wt\fg_-=(\mathcal R\ot \dg)\oplus \mathcal K\oplus \sum_{\bm{m}\in \Z^2}\C\rd_{\bm{m}}\oplus \C\rd_0,\]
which is a subalgebra of $\wt\fg$ and $\wt\fg=\wh\fg\oplus \C\rd_1$.
In this subsection we introduce the notion of highest weight module for $\wh\fg$ and determine the relation
between integrable highest weight modules for $\wh\fg$ and $\wt\fg$ (of type II).

Let $\wh\fh=\wh\fg\cap \wt\fh$.  A $\wh\fg$-module $V$ is called a weight module if it admits a weight space decomposition
relative to $\wh\fh$.
 And a weight $\wh\fg$-module $V$ is called integrable if  for any $\al\in \wt\Delta^\times$,
$\wt\fg_\al$ acts locally nilponently on $V$.
For any $\wh\fg$-module $V$, we
can construct a loop module $L(V)=\C[t_1,t_1^{-1}]\ot V$ associated to it for $\wt\fg$ with the action
given by
\[x.(t_1^n\ot v)=t_1^{m+n}\ot (x.v),\quad \rd_1.(t_1^n\ot v)=n\, t_1^n\ot v,\]
for $x\in \wh\fg_m$ (see \eqref{zgraded}), $m,n\in \Z$ and $v\in V$.

Write $\mathcal O_{\mathrm{fin}}^\times$ for the category of integrable $\wh\fg$-modules with finite dimensional
weight spaces and non-trivial $\wt\fg_c$-action.
The following is easy to be checked.
\begin{lemt}\label{vandlv} Let $V$ be a weight $\wh\fg$-module. Then $V\in \mathcal O_{\mathrm{fin}}^\times$ if and only if the $\wt\fg$-module $L(V)\in  \vartheta_{\mathrm{fin}}^\times$.
\end{lemt}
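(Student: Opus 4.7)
The plan is to exploit the compatibility of the $\Z$-grading of $\wh\fg$ (by the adjoint action of $\rd_1$) with the loop module construction, and verify that each of the defining properties of the categories $\mathcal O_{\mathrm{fin}}^\times$ and $\vartheta_{\mathrm{fin}}^\times$ transfers from $V$ to $L(V)$ and vice versa.

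First I would describe the weight space decomposition of $L(V)$. Since $\wt\fh = \wh\fh\oplus \C\rd_1$ and $\rd_1$ acts on $t_1^n\otimes v$ as the scalar $n$, a weight vector $v\in V_\lambda$ (with $\lambda\in \wh\fh^*$) produces a weight vector $t_1^n\otimes v\in L(V)$ of $\wt\fh$-weight $\wt\lambda+n\delta_1$, where $\wt\lambda$ is the extension of $\lambda$ with $\wt\lambda(\rd_1)=0$. Hence for every $\mu\in\wt\fh^*$ one has
\begin{equation*}
L(V)_\mu \;=\; t_1^{\mu(\rd_1)}\otimes V_{\mu|_{\wh\fh}}
\end{equation*}
(with the convention that this is zero unless $\mu(\rd_1)\in\Z$), so $\dim L(V)_\mu=\dim V_{\mu|_{\wh\fh}}$. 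This immediately gives that $L(V)$ has finite-dimensional weight spaces if and only if $V$ does, and also establishes that the map $\lambda\mapsto \wt\lambda+n\delta_1$ sets up a bijection between $\wh\fh$-weights of $V$ and $\wt\fh$-weights of $L(V)$.

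Next I would check the integrability equivalence. For each non-isotropic root $\al=\dot\al+m_0\delta_0+m_1\delta_1\in\wt\Delta^\times$ the root space $\wt\fg_\al=\C\, t^{\bm m}\otimes x_{\dot\al}$ is one-dimensional and sits inside $\wh\fg$ as a homogeneous element of $\rd_1$-degree $m_1$. Using the definition of the loop action one has
\begin{equation*}
x^k(t_1^n\otimes v)\;=\;t_1^{n+km_1}\otimes x^k v
\end{equation*}
for every $k\ge 0$, so $\wt\fg_\al$ acts locally nilpotently on $L(V)$ if and only if it acts locally nilpotently on $V$. Combined with the previous step this shows $V$ is integrable (with finite-dimensional weight spaces) if and only if $L(V)$ is.

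Finally I would verify the non-triviality of the $\wt\fg_c$-action. Since $\wt\fg_c=(\CR\ot\dg)\oplus\CK$ is contained in $\wh\fg$ and is spanned by elements homogeneous with respect to the $\rd_1$-grading, the same identity $x(t_1^n\otimes v)=t_1^{n+\deg x}\otimes xv$ shows $\wt\fg_c\cdot L(V)=0$ if and only if $\wt\fg_c\cdot V=0$. Assembling the three observations yields the biconditional. There is no substantial obstacle here; the only point demanding a small bit of care is the weight identification in the first step, to make sure that quasi-finiteness on both sides is matched under the shift by multiples of $\delta_1$.
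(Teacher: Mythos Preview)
Your argument is correct and is precisely the routine verification the paper has in mind; the paper omits the proof entirely, merely stating that the lemma ``is easy to be checked.'' Your three-step check (matching weight spaces via $L(V)_\mu = t_1^{\mu(\rd_1)}\otimes V_{\mu|_{\wh\fh}}$, transferring local nilpotency through $x^k(t_1^n\otimes v)=t_1^{n+km_1}\otimes x^kv$, and noting $\wt\fg_c\subset\wh\fg$ is $\rd_1$-graded) is exactly the intended content.
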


Parallel to the Definition \ref{defhw2}, we introduce the following notion.

\begin{dfnt}\label{defhhw}
A $\wh\fg$-module $V$ is called a highest weight module with highest weight $\psi\in \wh\CH^*$ if there exists a non-zero vector $v\in V$ such that
\begin{enumerate}
\item[(1)]\ $V$ is generated by $v$;
\item[(2)]\ $\wt\fg_+.v=0$;
\item[(3)]\ $h.v=\psi(h)v$ for $h\in \wh\CH$.
\end{enumerate}
\end{dfnt}

Note that the condition (3) in Definition \ref{defhhw} forces that $\psi(\rk_1)=0$.
Given such a linear functional $\psi$ on $\wh\CH$.
We let $\C v_\psi$ stand for the  one dimensional $(\widetilde{\mathfrak{g}}(\mu)_+\oplus\widehat{\mathcal{H}})$-module
defined by
\[h.v_\psi=\psi(h)v_\psi,\quad \widetilde{\mathfrak{g}}(\mu)_+.v_\psi=0,\]
for $h\in\widehat{\mathcal{H}}$. Form the induced $\widehat{\mathfrak{g}}(\mu)$-module
\[\widehat{M}(\psi)=\U(\widehat{\mathfrak{g}}(\mu))\otimes_{\U(\widetilde{\mathfrak{g}}(\mu)_+\oplus\widehat{\mathcal{H}})}\C v_\psi,\]
which has an irreducible quotient, call $\widehat{V}(\psi)$.
We remark that $\widehat{V}(\psi)$ is the unique irreducible highest weight $\wh\fg$-module with highest weight $\psi$ and that
\begin{align}\label{charhwv} \{v\in \widehat{V}(\psi)\mid \wt\fg_+.v=0\}=\C v_\psi.\end{align}

The main purpose of this subsection is to prove the following proposition.
\begin{prpt}\label{hwmeq}
Let $\psi\in \CE$ with $\dim L_\psi>1$ and $b\in \C$. Then
 the irreducible type II highest weight $\wt\fg$-module $\widetilde{V}(\psi,b)\in \vartheta_{\mathrm{fin}}^\times$ 
 if and only if the irreducible highest weight
$\wh\fg$-module $\widehat{V}(\psi)\in \mathcal O_{\mathrm{fin}}^\times$.
\end{prpt}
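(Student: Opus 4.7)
The plan is to establish the equivalence through the loop-module construction of Lemma~\ref{vandlv}, using $L(\wh V(\psi))$ (twisted by adding the scalar $b$ to the action of $\rd_1$, written $L(\wh V(\psi))[b]$) as the bridge.  By Lemma~\ref{lem:hmod} and the hypothesis $\dim L_\psi>1$, one has $L_\psi=L_r$ for some integer $r\ge 1$, and $L(\psi)$ decomposes as the $\wt\CH$-module $\bigoplus_{i=0}^{r-1}L_{\psi,i}$.

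For the direction $(\Leftarrow)$, starting from $\wh V(\psi)\in\mathcal O_{\mathrm{fin}}^\times$, Lemma~\ref{vandlv} yields $L(\wh V(\psi))\in\vartheta_{\mathrm{fin}}^\times$, a property unaffected by the $b$-twist.  The assignment $t_1^m\mapsto t_1^m\ot v_\psi$ defines a $(\wt\fg_+\oplus\wt\CH)$-equivariant embedding $L_{\psi,b}\hookrightarrow L(\wh V(\psi))[b]$, with $\wt\fg_+$-equivariance guaranteed by $\wt\fg_+.v_\psi=0$.  By the universal property of $\wt M(\psi,b)$ this lifts to a $\wt\fg$-homomorphism whose image is the submodule $W:=\U(\wt\fg).(t_1^0\ot v_\psi)$.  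Since the irreducible quotient $\wt V(\psi,b)$ of $W$ is a subquotient of the $\vartheta_{\mathrm{fin}}^\times$-object $L(\wh V(\psi))[b]$, it inherits integrability and finite-dimensional weight spaces, and the non-triviality of $\wt\fg_c$-action transfers from $\wh V(\psi)$ through this construction to $\wt V(\psi,b)$.

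For the direction $(\Rightarrow)$, which is the main obstacle, suppose $\wt V(\psi,b)\in\vartheta_{\mathrm{fin}}^\times$.  The plan is to show $L(\wh V(\psi))\in\vartheta_{\mathrm{fin}}^\times$ and then invoke Lemma~\ref{vandlv}.  Using the $\wt\CH$-decomposition of $L(\psi)$, write $L(\wh V(\psi))[b]=\sum_{i=0}^{r-1}W_i$ with $W_i:=\U(\wt\fg).(t_1^i\ot v_\psi)$.  Each $W_i$ is a quotient of $\wt M(\psi,b+i)$ whose unique irreducible quotient is $\wt V(\psi,b+i)$, and the latter is isomorphic to $\wt V(\psi,b)$ after an inessential integer shift in the scalar action of $\rd_1$; hence each $\wt V(\psi,b+i)\in\vartheta_{\mathrm{fin}}^\times$.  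The crux is to prove each $W_i$ is itself irreducible, so that $W_i\cong\wt V(\psi,b+i)$.  The strategy is to identify the $\wt\fg_+$-maximal-vector space of $L(\wh V(\psi))[b]$ with $\C[t_1,t_1^{-1}]\ot v_\psi$, whose $\wt\CH$-decomposition $\bigoplus_{i=0}^{r-1}L_{\psi,b+i}$ consists of pairwise non-isomorphic irreducibles; any non-zero $\wt\fg$-submodule of $W_i$ then contains a non-zero maximal vector (by a depth-argument using the triangular decomposition induced from $\wt M(\psi,b+i)$), which by $\wt\CH$-irreducibility must span some $L_{\psi,b+j}$, and the generator $t_1^i\ot v_\psi$ already dictates $j=i$; since $W_i=\U(\wt\fg).L_{\psi,b+i}$, the submodule must equal $W_i$.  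With $W_i\cong\wt V(\psi,b+i)$ established, the finite sum $L(\wh V(\psi))[b]=\sum_i W_i$ lies in $\vartheta_{\mathrm{fin}}^\times$, and Lemma~\ref{vandlv} yields $\wh V(\psi)\in\mathcal O_{\mathrm{fin}}^\times$.
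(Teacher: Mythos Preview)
Your argument is correct and follows essentially the same route as the paper: build the loop module $L(\wh V(\psi))$, show that the cyclic submodules $W_i=\U(\wt\fg).(t_1^i\ot v_\psi)$ are irreducible and isomorphic to $\wt V(\psi,i)$, and then invoke Lemma~\ref{vandlv}. The paper organizes this slightly more cleanly by proving the unconditional complete reducibility $L(\wh V(\psi))=\bigoplus_{i=0}^{r-1}L_i(\wh V(\psi))\cong\bigoplus_{i=0}^{r-1}\wt V(\psi,i)$ first (Lemmas~\ref{basiconghhwm}--\ref{irrcompo}), from which both directions follow symmetrically; your $(\Leftarrow)$ direction takes a subquotient shortcut instead, which is fine for integrability and finiteness of weight spaces but leaves the transfer of \emph{non-trivial} $\wt\fg_c$-action slightly underspecified. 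The clean fix is exactly what the paper does implicitly: once you know $W_0\cong\wt V(\psi,b)$ is an honest direct summand (not merely a subquotient) and that all the $W_i$ are isomorphic up to an $\rd_1$-shift, non-triviality of the $\wt\fg_c$-action on $L(\wh V(\psi))$ is equivalent to non-triviality on any single $\wt V(\psi,b+i)$.
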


From now on, in this subsection let $\psi$ be as in Proposition \ref{hwmeq} and let $r$ be the positive integer such that $L_{\psi}=L_r$.
Before proving Proposition \ref{hwmeq}, we need to collect some basic properties about the loop $\wt\fg$-module associated to $\wh{V}(\psi)$.

\begin{lemt}\label{basiconghhwm}
(i) The $\wt\fg$-module $L(\wh{V}(\psi))$ is a weight module and each weight lies in the set
\begin{align}\label{weightset}\{\underline{\psi}-\eta+l\delta_1\mid \eta\in \Gamma_+, l\in \Z\},\end{align}
where $\Gamma_+=\oplus_{i=0}^{\ell}\N\al_i$ and $\underline{\psi}\in \wt\fh^*$ is defined by letting $\underline{\psi}(\rd_1)=0$ and $\underline{\psi}(h)=\psi(h)$ for $h\in \wh\fh$.

(ii) $\C[t_1,t_1^{-1}]\ot \C v_{\psi}=\{v\in L(\wh{V}(\psi))\mid \wt\fg_+.v=0\}$.

(iii) As  $\wt\CH$-modules,  $\C[t_1,t_1^{-1}]\ot \C v_{\psi}$  is isomorphic to $L(\psi)$.

(iv)  For any $i\in \Z$, $\U(\wt\CH).t_1^i\ot v_{\psi}=t_1^i\C[t_1^r,t_1^{-r}]\ot \C v_{\psi}$.

(v) The $\wt\fg$-module  $L(\wh{V}(\psi))$ is generated by the elements $t_1^i\ot v_{\psi}$, $0\le i\le r-1$.
\end{lemt}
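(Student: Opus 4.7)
The plan is to treat the five assertions in order, each leveraging the earlier ones together with Lemma \ref{lem:hmod}.

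For (i), I would first observe that, by PBW together with $\wt\fg_+.v_\psi = 0$ and $\wh\CH.v_\psi \subseteq \C v_\psi$, one has $\wh{V}(\psi) = \U(\wt\fg_-).v_\psi$. Each root in $\wt\Delta_-$ has the form $-\alpha + m_1\delta_1$ with $\alpha \in \Delta_+$, and since $\delta_1$ vanishes on $\wh\fh$, acting by the corresponding root vector lowers the $\wh\fh$-weight by $\alpha \in \Gamma_+$. Thus every $\wh\fh$-weight of $\wh{V}(\psi)$ lies in $\psi - \Gamma_+$. Lifting to $L(\wh{V}(\psi))$, the loop formula makes the $\rd_1$-eigenvalue of $t_1^l \ot v$ equal to $l$, which contributes the $l\delta_1$ summand and confirms the claimed weight set.

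For (ii), the inclusion $\C[t_1,t_1^{-1}] \ot \C v_\psi \subseteq \{v : \wt\fg_+.v = 0\}$ is immediate from \eqref{charhwv} together with the loop formula. For the converse, it suffices to treat a $\wt\fh$-weight vector; since $\delta_1$ vanishes on $\wh\fh$, matching the $\rd_1$-eigenvalue forces such a vector to have the form $t_1^n \ot v'$ for a single $n$. Then $v'$ itself must be killed by $\wt\fg_+$ inside $\wh{V}(\psi)$, and \eqref{charhwv} yields $v' \in \C v_\psi$.

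Part (iii) is a direct comparison of formulas: under $t_1^m \ot v_\psi \mapsto t_1^m$, for $h \in \wh\CH_n$ the loop action gives $h.(t_1^m \ot v_\psi) = \psi(h)\, t_1^{m+n} \ot v_\psi$, matching the definition of $L(\psi)$, and similarly $\rd_1$ acts by multiplication by $m$ on both sides. Part (iv) then combines (iii) with Lemma \ref{lem:hmod}(i), which identifies $\U(\wt\CH).t_1^i$ with $t_1^i\C[t_1^r, t_1^{-r}]$ for $0 \le i \le r-1$; writing a general $i$ as $qr + s$ with $0 \le s \le r-1$ and noting that $t_1^i\C[t_1^r, t_1^{-r}] = t_1^s\C[t_1^r, t_1^{-r}]$ extends the statement to all $i \in \Z$. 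Finally, (v) follows by first applying (iv) to $i = 0, 1, \ldots, r-1$ to recover $\C[t_1,t_1^{-1}] \ot \C v_\psi$ under the $\U(\wt\CH)$-action, and then using $\wh{V}(\psi) = \U(\wh\fg).v_\psi$ together with the loop formula $u.(t_1^{n-m} \ot v_\psi) = t_1^n \ot (u.v_\psi)$ for $u \in \wh\fg_m$ to sweep out all of $L(\wh{V}(\psi))$ under $\U(\wh\fg) \subseteq \U(\wt\fg)$.

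I do not foresee any serious obstacle in this lemma; the only point that requires care is that $\rd_1 \notin \wh\fg$, so there is no intrinsic $\rd_1$-grading on $\wh{V}(\psi)$ and the $\rd_1$-eigenvalues on $L(\wh{V}(\psi))$ are supplied entirely by the $t_1$-exponent. This is exactly what lets a weight vector in (ii) collapse to a single summand $t_1^n \ot v'$, and it is what makes the bookkeeping throughout the lemma routine.
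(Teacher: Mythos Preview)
Your proposal is correct and follows essentially the same approach as the paper, which treats (i) and (iii) as clear, deduces (ii) from \eqref{charhwv}, deduces (iv) from Lemma~\ref{lem:hmod}(i), and proves (v) by first recovering $\C[t_1,t_1^{-1}]\ot\C v_\psi$ via (iii)--(iv) and then using the loop formula. The only small refinement worth making explicit in (v) is that for a general $w=X.v_\psi$ with $X\in\U(\wh\fg)$, one decomposes $X=\sum_n X_n$ into $\rd_1$-homogeneous pieces so that $t_1^m\ot w=\sum_n X_n.(t_1^{m-n}\ot v_\psi)$; your phrasing with $u\in\wh\fg_m$ suggests this but does not quite spell it out.
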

\begin{proof} The assertions (i) and (iii) are clearly, the assertion (ii) follows \eqref{charhwv} and the assertion (iv)
is implied by Lemma \ref{lem:hmod} (i). For the assertion (v), let $W$ be the $\wt\fg$-submodule
of $L(\wh{V}(\psi))$ generated by $t_1^i\ot v_{\psi}$, $i=0,\cdots,r-1$. It follows from the assertions (iii) and (iv)  that
$\C[t_1,t_1^{-1}]\ot \C v_{\psi}\subset W$. Let $t_1^m\ot u$ be a given element in $L(\wh{V}(\psi))$, where $m\in \Z$ and $u\in \wh{V}(\psi)$.
Since $\wh{V}(\psi)$ is generated by $v_{\psi}$ as $\wh\fg$-module, there exists $X\in \U(\wh\fg)$ such that
$X.v_{\psi}=u$.
Decompose $X$ into a finite summation $X=\sum_{n\in \Z} X_n$ such that $[\rd_1,X_n]=nX_n$.
Then we have
\[t_1^m\ot u=t_1^m\ot (\sum X_n.v_{\psi})=\sum t_1^m\ot (X_n.v_{\psi})=\sum X_n.(t_1^{m-n}\ot v_{\psi})\in W.\]
This proves the assertion (v).
\end{proof}

For any $i\in \Z$, denote by $L_i(\wh{V}(\psi))$ the $\wt\fg$-submodule of $L(\widehat{V}(\psi))$ generated by the
element $ t_1^i\ot v_{\psi}$.

\begin{lemt}\label{sechw1} For any $i,j\in \Z$, the elements $t_1^j\ot v_{\psi}\in L_i(\wh{V}(\psi))$ if and only if
$i\equiv j$ $(\mathrm{mod}\ r)$.
\end{lemt}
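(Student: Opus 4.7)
The plan is to reduce the lemma to a weight-space computation: I will show that the weight space $L_i(\wh V(\psi))_{\underline{\psi}+j\delta_1}$ is nonzero precisely when $j\equiv i\pmod r$. The first step is to verify that $\wh V(\psi)_\psi = \C v_\psi$. For this I use the triangular decomposition $\wh\fg = \wt\fg_+\oplus \wh\CH\oplus \wt\fg_-$ together with PBW to obtain $\wh V(\psi) = \U(\wt\fg_-).v_\psi$, and then note that any nontrivial $\Z_{\ge 0}$-linear combination of roots in $\wt\Delta_-$ cannot be a multiple of $\delta_1$, because the $\Delta_-$-component of each summand is a strictly negative root of $\fg$. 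Consequently, $L(\wh V(\psi))_{\underline{\psi}+j\delta_1} = t_1^j\ot \wh V(\psi)_\psi = \C(t_1^j\ot v_\psi)$, which reduces the lemma to the assertion that $L_i(\wh V(\psi))_{\underline{\psi}+j\delta_1}\ne 0$ if and only if $r\mid(j-i)$.

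The ``if'' direction is then immediate from Lemma \ref{basiconghhwm}(iv): when $j\equiv i\pmod r$, the vector $t_1^j\ot v_\psi$ already lies in $\U(\wt\CH).(t_1^i\ot v_\psi) = t_1^i\C[t_1^r,t_1^{-r}]\ot\C v_\psi$, which is contained in $L_i(\wh V(\psi))$.

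For the converse, I will use the PBW decomposition of $\U(\wt\fg)$ relative to $\wt\fg = \wt\fg_-\oplus \wt\CH\oplus \wt\fg_+$, together with the identity $\wt\fg_+.(t_1^i\ot v_\psi) = 0$, to rewrite
\[L_i(\wh V(\psi)) \,=\, \U(\wt\fg_-).\bigl(t_1^iL_r\ot\C v_\psi\bigr) \,=\, \sum_{n\in\Z}\U(\wt\fg_-).(t_1^{i+nr}\ot v_\psi).\]
Any weight vector in $L_i(\wh V(\psi))_{\underline{\psi}+j\delta_1}$ must then come from an element $X\in\U(\wt\fg_-)$ of weight $(j-i-nr)\delta_1$ acting on $t_1^{i+nr}\ot v_\psi$, and by the same cone argument such an $X$ must be a scalar, which is nonzero only when $j = i+nr$ for some $n\in\Z$, giving $r\mid(j-i)$. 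The main technical point to verify is precisely this cone argument: one must confirm that no nonempty $\Z_{\ge 0}$-linear combination of elements in $\wt\Delta_-$ can have zero $\Delta_-$-component, which reduces to noting that $\Delta_-$ lies in a strictly negative cone inside the root lattice of the affine subalgebra $\fg$ and contains no zero element.
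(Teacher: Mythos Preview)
Your argument is correct and follows essentially the same route as the paper. Both proofs rest on the PBW factorisation $L_i(\wh V(\psi))=\U(\wt\fg_-)\,\U(\wt\CH).(t_1^i\ot v_\psi)$ together with Lemma~\ref{basiconghhwm}(iv); the paper then simply asserts the intersection identity $(\C[t_1,t_1^{-1}]\ot\C v_\psi)\cap L_i(\wh V(\psi))=\U(\wt\CH).(t_1^i\ot v_\psi)$, whereas you spell out explicitly the underlying weight-cone reasoning (that no nonempty sum of roots in $\wt\Delta_-$ can lie in $\Z\delta_1$) which justifies that identity.
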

\begin{proof} For any $i\in \Z$, it follows from the fact $L_i(\wt{V}(\psi))=\U(\wt\fg_-)\U(\wt\CH).t_1^i\ot v_{\psi}$ that
\[(\C[t_1,t_1^{-1}]\ot \C v_{\psi})\cap
L_i(\wt{V}(\psi))=\U(\wt\CH).t_1^i\ot v_{\psi}.\]
Then the assertion is implied by  Lemma \ref{basiconghhwm} (iv).
\end{proof}

\begin{lemt}\label{irrcompo} For any $i\in \Z$, the $\wt\fg$-submodule of $L(\widehat{V}(\psi))$ is irreducible and is isomorphic to
$\wt{V}(\psi,i)$.
\end{lemt}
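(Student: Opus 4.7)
The plan is to realise $L_i(\wh{V}(\psi))$ as a quotient of the induced module $\wt{M}(\psi,i)$ and then prove that this quotient is already irreducible; combined with the uniqueness of the irreducible quotient of $\wt{M}(\psi,i)$, this will yield $L_i(\wh{V}(\psi))\cong \wt{V}(\psi,i)$.

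For the first step I would apply Frobenius reciprocity to the vector $t_1^i\ot v_\psi$. By Lemma \ref{basiconghhwm}(ii) this vector is annihilated by $\wt\fg_+$, and by Lemma \ref{basiconghhwm}(iii)-(iv) combined with Lemma \ref{lem:hmod}(i) the $\wt\CH$-submodule $\U(\wt\CH).(t_1^i\ot v_\psi)=t_1^i\C[t_1^r,t_1^{-r}]\ot \C v_\psi$ is isomorphic to $L_{\psi,i}$. The universal property of the induced module $\wt{M}(\psi,i)$ therefore produces a $\wt\fg$-module homomorphism $\wt{M}(\psi,i)\to L(\wh{V}(\psi))$, whose image is precisely $L_i(\wh{V}(\psi))$ by definition.

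The main step is proving the irreducibility of $L_i(\wh{V}(\psi))$. Let $W\subseteq L_i(\wh{V}(\psi))$ be any non-zero $\wt\fg$-submodule; being a submodule of a weight module, $W$ itself decomposes into $\wt\fh$-weight spaces. By Lemma \ref{basiconghhwm}(i), every weight of $W$ has the form $\underline{\psi}-\eta+l\delta_1$ with $\eta\in \Gamma_+$, and since the componentwise partial order on the free $\N$-monoid $\Gamma_+$ is a well-partial-order, I may choose a non-zero weight vector $w\in W$ whose $\eta$-coordinate $\eta_0$ is minimal. For any $x\in \wt\fg_{\alpha+m_1\delta_1}$ with $\alpha\in \Delta_+$, the vector $x.w$ would carry weight $\underline{\psi}-(\eta_0-\alpha)+(l+m_1)\delta_1$. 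Since $\alpha\in \Gamma_+$, either $\eta_0-\alpha\notin \Gamma_+$ and Lemma \ref{basiconghhwm}(i) forces $x.w=0$, or $\eta_0-\alpha\in \Gamma_+$ is strictly smaller than $\eta_0$, contradicting minimality unless $x.w=0$. In either case $\wt\fg_+.w=0$, and then Lemma \ref{basiconghhwm}(ii) together with Lemma \ref{sechw1} places $w$ inside $\U(\wt\CH).(t_1^i\ot v_\psi)\cong L_{\psi,i}$. As $L_{\psi,i}$ is irreducible over $\wt\CH$, the $\wt\CH$-submodule generated by $w$ recovers all of $\U(\wt\CH).(t_1^i\ot v_\psi)$; in particular $t_1^i\ot v_\psi\in W$, and since $L_i(\wh{V}(\psi))$ is by definition generated by $t_1^i\ot v_\psi$, we conclude $W=L_i(\wh{V}(\psi))$.

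Putting the two steps together, the surjection $\wt{M}(\psi,i)\twoheadrightarrow L_i(\wh{V}(\psi))$ has an irreducible target, hence factors through the unique irreducible quotient $\wt{V}(\psi,i)$, yielding the desired isomorphism. The point I expect to require the most care is the minimality argument: one must verify that every $\alpha\in \Delta_+$ really does lie in $\Gamma_+=\oplus_{i=0}^{\ell}\N\alpha_i$ (which is guaranteed by the choice of simple system $\Pi$ for the affine algebra $\fg$) and that the weight constraint of Lemma \ref{basiconghhwm}(i) is strict enough to rule out every potential image $x.w$, so that $\wt\fg_+$ genuinely annihilates the chosen minimal weight vector.
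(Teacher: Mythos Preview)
Your proof is correct and follows essentially the same route as the paper's: both select a weight vector in a non-zero submodule $W$ with minimal $\eta$-component (the paper phrases this as a maximal weight for the order $\preceq$ defined by $\eta-\eta'\in\Gamma_+$), deduce $\wt\fg_+.w=0$, invoke Lemma~\ref{basiconghhwm}(ii) and Lemma~\ref{sechw1} to locate $w$ in $t_1^i\C[t_1^r,t_1^{-r}]\ot\C v_\psi$, and conclude $t_1^i\ot v_\psi\in W$. Your write-up is in fact more careful than the paper's in two places: you justify the existence of a minimal $\eta$ via the well-partial-order on $\Gamma_+$, and you spell out the isomorphism $L_i(\wh V(\psi))\cong\wt V(\psi,i)$ through the universal property of $\wt M(\psi,i)$, whereas the paper simply calls the latter ``obvious'' and omits it.
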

\begin{proof} Let $i\in \Z$ and let $W$ be a non-zero $\wt\fg$-submodule of $L_i(\widehat{V}(\psi))$.
Define a partial order ``$\preceq$" on the set \eqref{weightset}
by letting
$\underline{\psi}-\eta+l\delta_1 \preceq \underline{\psi}-\eta'+l'\delta_1$ if and only if $ \eta-\eta'\in \Gamma_+.$
Choose a non-zero weight vector $w\in W$ such that its associated weight is maximal with respect to the partial order ``$\preceq$".
This gives that $\wt\fg_+.w=0$, and hence $w=t_1^j\ot v_{\psi}$ for some $j\in \Z$ (see Lemma \ref{basiconghhwm} (ii)).
 By Lemma \ref{sechw1}, one has that $i\equiv j$ $(\mathrm{mod}\ r)$. This gives that $t_1^i\in \U(\wt\CH)t_1^j\subset W$, which
 proves the first assertion. The second one is obvious and is omitted.
\end{proof}

Now we are going to complete the proof of Proposition \ref{hwmeq}.
Note that one can conclude from Lemma \ref{basiconghhwm} (v) and Lemma \ref{irrcompo} that the
$\wt\fg$-module $L(\wh{V}(\psi))$ is completely reducible
\begin{align}\label{comred}
L(\wh{V}(\psi))=\oplus_{i=0}^{r-1} L_i(\wh{V}(\psi))\cong \oplus_{i=0}^{r-1} \wt{V}(\psi,i).\end{align}
Finally, one can easily check that Proposition \ref{hwmeq} is implied by \eqref{comred}, Lemma \ref{vandlv}  and
the following straightforward lemma.
\begin{lemt}
 For any $\psi\in \CE$ and $b\in\C$, one can modify the action of $\rd_1$ on the $\wt\fg$-module $\widetilde{V}(\psi,0)$
by adding the scalar action $b\,\mathrm{Id}$, and the resulting $\wt\fg$-module is isomorphic to $\widetilde{V}(\psi,b)$.
\end{lemt}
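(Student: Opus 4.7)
The plan is to verify this by unpacking what it means to modify $\rd_1$ by a scalar and then appeal to the uniqueness of irreducible highest weight modules of type II. Let $\rho:\wt\fg\to \mathrm{End}(\widetilde{V}(\psi,0))$ denote the original action and define $\rho^{[b]}$ by $\rho^{[b]}(\rd_1)=\rho(\rd_1)+b\,\mathrm{Id}$ and $\rho^{[b]}(x)=\rho(x)$ for every basis element $x\neq\rd_1$ of $\wt\fg$. The first step is to confirm that $\rho^{[b]}$ is still a Lie algebra action. This reduces to checking the brackets $[\rho^{[b]}(\rd_1),\rho^{[b]}(y)]=\rho^{[b]}([\rd_1,y])$. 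Inspecting the relations displayed in Subsection~2.1, the bracket $[\rd_1,\cdot]$ acts semisimply on each of the generating subspaces and never produces a summand in $\C\rd_1$ (for instance $[\rd_1,\rd_{\bm m}]=m_1\rd_{\bm m}$, $[\rd_1,t^{\bm n}\otimes x]=n_1 t^{\bm n}\otimes x$, $[\rd_0,\rd_1]=0$); hence $\rho^{[b]}([\rd_1,y])=\rho([\rd_1,y])$, and the $b\,\mathrm{Id}$ summand cancels in the commutator bracket, so the identity holds.

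Denote the resulting $\wt\fg$-module by $\widetilde{V}(\psi,0)^{[b]}$. The second step is to show that $\widetilde{V}(\psi,0)^{[b]}$ is an irreducible type II highest weight $\wt\fg$-module with underlying $\wt\CH$-module equal to $L_{\psi,b}$. Irreducibility is immediate: a subspace of $\widetilde{V}(\psi,0)$ is $\rho$-invariant if and only if it is $\rho^{[b]}$-invariant, since the two actions differ only by adding a scalar multiple of the identity on a single generator. Let $v_\psi\in\widetilde{V}(\psi,0)$ denote the canonical generating vector satisfying $\wt\fg_+.v_\psi=0$ and generating the $\wt\CH$-submodule $L_{\psi,0}$. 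In the modified action, $\wt\fg_+.v_\psi=0$ still holds, and $\U(\wt\CH).v_\psi$ is still spanned by the same underlying vector space as $L_{\psi,0}$, because $\wh\CH$ acts unchanged. The action of $\rd_1$ on this space, however, is now $\rho(\rd_1)+b\,\mathrm{Id}$, so by comparing with the definition $\rd_1.t_1^m=m\,t_1^m$ of $L_\psi$ and the prescription that $L_{\psi,b}$ differs from $L_{\psi,0}$ by adding $b\,\mathrm{Id}$ to the $\rd_1$-action, we see $\U(\wt\CH).v_\psi\cong L_{\psi,b}$ as $\wt\CH$-modules.

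The third step concludes by uniqueness. Since $\widetilde{V}(\psi,0)^{[b]}$ is irreducible, is generated by $v_\psi$ over $\wt\fg$, has $\wt\fg_+.v_\psi=0$, and has $\U(\wt\CH).v_\psi\cong L_{\psi,b}$, it is an irreducible highest weight module of type II in the sense of Definition~\ref{defhw2}. Then $\widetilde{V}(\psi,0)^{[b]}$ is a nonzero quotient of $\widetilde{M}(\psi,b)$, and by irreducibility it must coincide with the unique irreducible quotient $\widetilde{V}(\psi,b)$, yielding the claimed isomorphism.

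There is no real obstacle here; the only point requiring attention is the bracket compatibility in Step~1, and this is guaranteed by the particular shape of the relations in $\wt\fg$, namely that $\rd_1$ never appears in the right-hand side of $[\rd_1,\cdot]$. Everything else is a direct unwinding of the definitions of $L_{\psi,b}$ and $\widetilde{V}(\psi,b)$.
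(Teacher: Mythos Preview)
Your argument is correct and is exactly the straightforward verification the paper has in mind; indeed the paper omits the proof entirely, labeling the lemma ``straightforward.'' The key structural fact you isolate---that $\rd_1$ never appears in the image of $[\cdot,\cdot]$ (equivalently, $\rd_1\notin[\wt\fg,\wt\fg]$)---is precisely what makes the scalar shift harmless, and your appeal to the uniqueness of the irreducible quotient of $\widetilde M(\psi,b)$ is the natural way to finish.
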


\subsection{Realization of irreducible integrable highest weight $\wh\fg$-modules}
Let $(\bm{\lambda},\bm{b},\varphi)$ be a triple as in Theorem \ref{mainhw2}.
In this  subsection we will prove that
the irreducible highest weight $\wh\fg$-module  $\widehat{V}(\psi_{\bm{\lambda},\bm{a},\varphi})\in \mathcal O_{\mathrm{fin}}^\times$.

We first recall a class of irreducible  highest weight $\wh\fg$-modules constructed in \cite{CLT}.
We set  $\dot{\mathfrak{f}}=\dg\oplus \C\rk_1\oplus\C\rd_1$, which is a finite dimensional reductive subalgebra of $\wt\fg$.
Notice that the restriction of the invariant  form $\<\cdot,\cdot\>$ on $\dot{\mathfrak{f}}$ is sitll non-degenerate.
Write
\[\bar{\mathfrak{f}}=\text{Der}(\C[t_0,t_0^{-1}])\ltimes(\C[t_0,t_0^{-1}]\otimes\dot{\mathfrak{f}})\oplus\C\rk_0\oplus\C\rk_v\]
for the Virasoro-affine Lie algebra associated to the pair $(\dot{\mathfrak{f}},\<\cdot,\cdot\>)$.
 Denote by
\[\bar{\mathfrak{f}}_0=\fh\oplus\C\rk_1\oplus\C\rd_1\oplus\C\rk_v\]
the Cartan subalgebra of $\bar{\mathfrak{f}}$.
View the affine root system $\Delta$  as a subset of $\bar{\mathfrak{f}}_0^*$ by letting
$\al(\rk_v)=\al(\rd_1)=\al(\rk_1)=0$ for $\al\in \Delta$.
Then we have the root space decomposition
$\bar{\mathfrak{f}}=\bigoplus_{\al\in \Delta}\bar{\mathfrak{f}}_\al$ and the triangular decomposition \[\bar{\mathfrak{f}}=\bar{\mathfrak{f}}_+\oplus \bar{\mathfrak{f}}_0\oplus \bar{\mathfrak{f}}_-,\]
where $\bar{\mathfrak{f}}_\al=\{x\in \bar{\mathfrak{f}}\mid [h,x]=\al(h)x,\ \forall h\in \bar{\mathfrak{f}}_0\}$
and $\bar{\mathfrak{f}}_\pm=\oplus_{\al\in \Delta_\pm}\bar{\mathfrak{f}}_\al$.
For any given linear functional $\eta$ on $\bar{\mathfrak{f}}_0$, we denote by $\C v_\eta$ the  one dimensional $(\bar{\mathfrak{f}}_+\oplus \bar{\mathfrak{f}}_0)$-module such that
 \[\bar{\mathfrak{f}}_+.v_\eta=0,\quad h.v_\eta=\eta(h)v_\eta,\quad h\in \bar{\mathfrak{f}}_0.\]
 Write $V_{\bar{\mathfrak{f}}}(\eta)$ for the unique irreducible quotient of the induced $\bar{\mathfrak{f}}$-module
\[M_{\bar{\mathfrak{f}}}(\eta)=\mathcal U(\bar{\mathfrak{f}})\otimes_{\mathcal U(\bar{\mathfrak{f}}_+\oplus \bar{\mathfrak{f}}_0)}\C v_\eta.\]

Let $(\bm{\lambda},\bm{a})\in (\fh^*)^k\times (\C^\times)^k$ be a pair which  satisfies the conditions $(c1)$ and $(c2)$.
In \cite{CLT}, we define a $\wh\fg$-module structure on the tensor product space
\[\widehat{V}(\bm{\lambda},\bm{a})=V_{\bar{\mathfrak{f}}}(\bar{\lambda}_1)\otimes\cdots\otimes V_{\bar{\mathfrak{f}}}(\bar{\lambda}_k),\]
where $\bar{\lambda}_i\in \bar{\mathfrak{f}}_0^*$ is defined as follows
\[\bar{\lambda}_i\mid_\fh=\lambda_i,\ \bar{\lambda}_i(\rk_v)=24\mu\lambda_i(\rk_0),\ \bar{\lambda}_i(\rk_1)=0=\bar{\lambda}_i(\rd_1).\]
Let $\psi_{\bm{\lambda},\bm{a}}$ be the linear functional  on  $\wh\CH$ defined by
\begin{align*}
t_1^n\ot h\mapsto \sum_{i=1}^k a_i^n \lambda_i(h),\quad
t_1^n \rk_0\mapsto \sum_{i=1}^k a_i^n \lambda_i(\rk_0),\quad
\rk_1\mapsto 0,\\
 \rd_0\mapsto \sum_{i=1}^k\lambda_i(\rd_0),\quad  t_1^m \rd_0\mapsto\sum_{i=1}^k a_i^m (\lambda_i(\rd_0)+\mu\lambda_i(\rk_0)),
\end{align*}
where  $h\in \dfh$, $n\in \Z$ and $m\in {\Z\setminus \{0\}}$. Set $v_{\bm{\lambda}}=v_{\bar{\lambda}_1}\otimes\cdots \ot v_{\bar{\lambda}_k}$.
In the following lemma we collect some  properties about the $\wh\fg$-module $\wh{V}(\bm{\lambda},\bm{a})$ proved in \cite{CLT}.
\begin{lemt}\label{reahw1}

 (i).  $\wt\fg_+.v_{\bm{\lambda}}=0$ and $h.v_{\bm{\lambda}}=\psi_{\bm{\lambda},\bm{a}}(h)$ for $h\in \wh\CH$;

  (ii).  The $\wh\fg$-module $\widehat{V}(\bm{\lambda},\bm{a})$ is irreducible;

  (iii). The $\wh\fg$-module $\widehat{V}(\bm{\lambda},\bm{a})$ lies in the category $\mathcal O_{\mathrm{fin}}^\times$.
\end{lemt}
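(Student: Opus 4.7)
The plan is to exploit the tensor product construction: since each factor $V_{\bar{\mathfrak{f}}}(\bar{\lambda}_i)$ is an irreducible highest weight module for the Virasoro-affine Lie algebra $\bar{\mathfrak{f}}$, my task is to leverage the $\wh\fg$-module structure given in \cite{CLT}, which is essentially an evaluation-type realization in which elements involving the formal variable $t_1$ act on the $i$-th tensor factor via the scalar $a_i^n$ for each power $t_1^n$, and then verify the three claims in turn.

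For part (i), I would compute directly. Positive root vectors in $\wt\fg_+$ map under the construction to positive root vectors for each $\bar{\mathfrak{f}}^{(i)}$, which annihilate every $v_{\bar{\lambda}_i}$; hence they kill $v_{\bm{\lambda}} = v_{\bar{\lambda}_1} \ot \cdots \ot v_{\bar{\lambda}_k}$. The weight formula then arises from comparing the evaluation of the Heisenberg generators $t_1^n \ot h$, $t_1^n \rk_0$, $t_1^m \rd_0$, $\rd_0$, $\rk_1$ at the points $a_i$ with the defining formulas for $\psi_{\bm{\lambda},\bm{a}}$, using that $\bar{\lambda}_i(\rk_1) = 0 = \bar{\lambda}_i(\rd_1)$. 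For part (ii), the standard approach is a Vandermonde-type separation: distinctness of $a_1,\ldots,a_k$ guarantees that for any fixed $t_0$-degree, the operators of the form $t_1^n \ot x$ ($n \in \Z$) span a space large enough to separate the tensor factors, so together with the irreducibility of each $V_{\bar{\mathfrak{f}}}(\bar{\lambda}_i)$ one concludes that $\widehat{V}(\bm{\lambda},\bm{a})$ is cyclic on $v_{\bm{\lambda}}$ and that any non-zero submodule must contain $v_{\bm{\lambda}}$ via a maximality argument in the natural weight order.

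For part (iii), integrability reduces to the integrability of each $V_{\bar{\mathfrak{f}}}(\bar{\lambda}_i)$ over the affine Kac-Moody subalgebra of $\bar{\mathfrak{f}}$: every real root vector of $\wh\fg$ evaluates to a finite sum of real root vectors in the $\bar{\mathfrak{f}}^{(i)}$'s, each of which acts locally nilpotently. Finite-dimensionality of weight spaces is inherited from the corresponding property of each tensor factor combined with the bounded-above structure of a highest weight module, and non-triviality of the $\wt\fg_c$-action follows from $\lambda_i \ne 0$, which ensures that some affine root vector acts non-trivially on at least one tensor factor. The main obstacle is not really in these consequences, but in verifying that the assignment in \cite{CLT} is genuinely a Lie algebra action; the subtle point is matching the $\mu$-twisted cocycle
\[[\rd_{\bm{m}},\rd_{\bm{n}}]=\detmn\,\rd_{\bm{m}+\bm{n}}+\mu\,(\detmn)^2 \sum_{a=0,1} m_a\, t^{\bm{m}+\bm{n}}\rk_a\]
of $\wt\fg$ against the Virasoro cocycle on each $\bar{\mathfrak{f}}^{(i)}$. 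This is precisely what forces the shift $\bar{\lambda}_i(\rk_v) = 24\mu\,\lambda_i(\rk_0)$ in the Virasoro central charge of the $i$-th factor, and verifying this identity through a Sugawara-type calculation on each tensor factor is the technical heart of the realization.
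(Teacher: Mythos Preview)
Your outline is essentially correct, and indeed it sketches the actual content of the results in \cite{CLT} that the paper cites. However, you should be aware that the paper itself does not prove this lemma at all: its entire proof consists of references to the companion paper \cite{CLT}, namely \cite[Lemma 4.1]{CLT} for (i), \cite[Theorem 4.2]{CLT} for (ii), and \cite[Lemma 4.6]{CLT} together with the definition of $\widehat{V}(\bm{\lambda},\bm{a})$ for (iii). So what you have written is not a comparison against the paper's argument but rather an independent reconstruction of the cited results.

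On the substance of your sketch: your identification of the key technical point --- that the $\wh\fg$-action is well defined precisely because the normalization $\bar\lambda_i(\rk_v)=24\mu\,\lambda_i(\rk_0)$ matches the $\mu$-twisted cocycle of $\wt\fg$ against the Virasoro cocycle on each factor --- is exactly right, and is the heart of the construction in \cite{CLT}. The Vandermonde separation argument you propose for (ii) is the standard one for evaluation-type tensor products and is indeed what underlies \cite[Theorem 4.2]{CLT}. For (iii), your integrability argument is fine (commuting locally nilpotent operators on tensor factors sum to a locally nilpotent operator); the finite-dimensionality of weight spaces is slightly more delicate than you indicate, since one must check that the $\wh\fh$-grading on the tensor product, which is governed by $\rd_0$ and $\dfh$, aligns with the $\bar{\mathfrak f}_0$-grading on each factor, but this follows readily once one unwinds how $\rd_0$ acts in the evaluation picture.
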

\begin{proof}
The assertions (i) and (ii) were respectively proved in \cite[Lemma 4.1]{CLT} and  \cite[Theorem 4.2]{CLT}.
The assertion (iii) follows from the definition of the $\wh\fg$-module $\widehat{V}(\bm{\lambda},\bm{a})$ and \cite[Lemma 4.6]{CLT}.
\end{proof}

Recall that $\pi$ is the quotient map from $\wt\fg$ to $\wt\CS$.
Let \[\wt\CS=\wt\CS_+\oplus \wt\CS_0\oplus \wt\CS_-\] be the decomposition of $\wt\CS$ induced by \eqref{tridec2},
where $\wt\CS_\pm=\pi(\wt\fg_\pm)$ and $\wt\CS_0=\pi(\wt\CH)$.
We also set $\wh\CS=\pi(\wh\fg)$ and $\wh\CS_0=\pi(\wh\CH)$ so that \[\wh\CS=\wt\CS_+\oplus \wh\CS_0\oplus \wt\CS_-.\]
An $\wh\CS$-module is called a weight module if $\rd_0$ acts semi-simple.
Let $\chi:\Z\rightarrow\C$ be a function such that $\chi(0)=0$. We define a one dimensional $(\wt\CS_+\oplus\wh\CS_0)$-module
$\C v_\chi$ as follows
\[\wt\CS_+.v_\chi=0,\quad \rd_0.v_\chi=0,\quad \rd((0,m)).v_\chi=\frac{\chi(m)}{m}v_\chi,\]
for $0\neq m\in\Z$.
Write $V_{\wh\CS}(\chi)$ for the unique irreducible quotient of the following induced $\wh{\mathcal{S}}$-module
\[ \U(\wh\CS)\otimes_{\U(\wt\CS_+\oplus \wh\CS_0)}\C v_\chi.\]
It is obvious that $V_{\wh\CS}(\chi)$ is a weight $\wh\CS$-module.

The following result was proved in \cite[Theorem 3.6]{LS}, see also \cite{BGLZ}.

\begin{lemt}\label{reahw2} The $\wh{\mathcal{S}}$-module $V_{\wh{\CS}}(\chi)$ has finite dimensional weight spaces if
and only if $\chi$ is an exp-polynomial function.
\end{lemt}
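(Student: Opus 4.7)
The plan is to reduce the statement to a finite-rank condition on an explicit infinite Hankel matrix built from $\chi$, and then to invoke Kronecker's classical theorem identifying such sequences as exactly the exp-polynomial ones.

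First I would study the weight space $V_{\wh\CS}(\chi)_{-1}$, where weights are taken with respect to the $\rd_0$-action. In the induced module $M_\chi=\U(\wh\CS)\otimes_{\U(\wt\CS_+\oplus\wh\CS_0)}\C v_\chi$, this weight space is freely spanned by $\{\rd((-1,m)).v_\chi\mid m\in\Z\}$. The bracket
\[
[\rd((1,n)),\rd((-1,m))]=(m+n)\,\rd((0,m+n))
\]
combined with $\rd((0,k)).v_\chi=(\chi(k)/k)\,v_\chi$ yields the key identity
\[
\rd((1,n))\cdot\rd((-1,m)).v_\chi=\chi(m+n)\,v_\chi
\]
in $M_\chi$. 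Since the weight-$0$ space of $M_\chi$ is exactly $\C v_\chi$, a vector $\sum_m c_m\,\rd((-1,m)).v_\chi$ lies in the maximal proper submodule if and only if $\sum_m c_m\chi(m+n)=0$ for every $n\in\Z$. Consequently $\dim V_{\wh\CS}(\chi)_{-1}$ equals the rank of the infinite Hankel matrix $H(\chi)=(\chi(m+n))_{m,n\in\Z}$, and by the classical theorem of Kronecker this rank is finite precisely when $\chi$ is exp-polynomial.

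For the converse, extending finite-dimensionality from weight $-1$ to every weight, I would construct an explicit realization of $V_{\wh\CS}(\chi)$ in the exp-polynomial case by pulling back to $\wh\CS$ via $\pi$ a suitable tensor product of highest-weight modules of the kind produced in Lemma \ref{reahw1}, on which weight multiplicities are manifestly bounded. Matching highest weights and checking irreducibility identifies this realization with $V_{\wh\CS}(\chi)$, yielding finite-dimensional weight spaces throughout. In the opposite direction one can attempt a direct inductive propagation using the Jacobi-type identity $[\rd(\bm m),\rd(\bm n)]=\det{\bm m\choose \bm n}\,\rd(\bm m+\bm n)$ to reduce the singular-vector relations at higher weights to iterated consequences of the weight $-1$ Hankel constraints.

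The principal obstacle I would expect lies in this propagation step. Kronecker's analysis at weight $-1$ is clean and definitive, but in the irreducible quotient the interaction among relations at different weight levels is nontrivial: higher-weight singular vectors need not be immediate consequences of weight $-1$ ones inside $M_\chi$. The technical heart of the argument is therefore to organize the PBW ordering carefully and to produce the correct explicit realization in the exp-polynomial case, effectively matching the algebraic bookkeeping with Kronecker's normal form for $\chi$.
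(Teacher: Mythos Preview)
The paper does not prove this lemma; it simply cites \cite[Theorem 3.6]{LS} (and \cite{BGLZ}). So you are attempting something the paper deliberately outsources.

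Your argument for the ``only if'' direction is correct and is essentially the standard one: the computation $\rd((1,n))\cdot\rd((-1,m)).v_\chi=\chi(m+n)\,v_\chi$ identifies $V_{\wh\CS}(\chi)_{-1}$ with the column space of the bi-infinite Hankel matrix $(\chi(m+n))_{m,n\in\Z}$, and finite rank of such a matrix is equivalent to the sequence satisfying a linear recurrence, i.e.\ being exp-polynomial. Your check that the maximal submodule at weight $-1$ is exactly the kernel of this pairing (using that $(M_\chi)_0=\C v_\chi$ and that $\wt\CS_+$ applied to a weight $-1$ vector lands only in weight $0$, so annihilation by all $\rd((1,n))$ already makes the vector singular) is fine.

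For the ``if'' direction, however, your approach (a) does not work as written. The modules $\wh V(\bm\lambda,\bm a)$ of Lemma~\ref{reahw1} are $\wh\fg$-modules with \emph{nontrivial} $\wt\fg_c$-action---that is exactly what places them in $\mathcal O_{\mathrm{fin}}^\times$---so they do not factor through $\pi$ and cannot be regarded as $\wh\CS$-modules. There is no ``pull back via $\pi$'' in the direction you need. In the paper the logic runs the opposite way: Lemma~\ref{reahw2} is an \emph{input} to Proposition~\ref{comrela}, which tensors $\wh V(\bm\lambda,\bm a)$ with $V_{\wh\CS}(\chi)$; using those modules to establish Lemma~\ref{reahw2} would be circular. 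Your approach (b), propagating the recurrence relations from weight $-1$ to all weights, is the correct idea and is precisely what \cite{LS} and \cite{BGLZ} carry out; but as you yourself flag, this is where all the work lies, and your proposal does not do it. The substantive step is to show that the finitely many relations $\sum_i p_i\,\rd((-1,m+i)).v_\chi\equiv 0$ coming from the recurrence for $\chi$ generate, under the $\wh\CS$-action, enough relations at every weight $-j$ to cut the quotient down to finite dimension there; this requires a genuine PBW/filtration argument, not merely Kronecker at weight $-1$.
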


The main purpose of this subsection is to prove the following result.
\begin{prpt}\label{comrela} Let $(\bm{\lambda},\bm{b},\varphi)$ be a triple which satisfies the conditions (c1)-(c3).
Then the irreducible highest weight $\wh\fg$-module  $\widehat{V}(\psi_{\bm{\lambda},\bm{a},\varphi})\in \mathcal O_{\mathrm{fin}}^\times$.
\end{prpt}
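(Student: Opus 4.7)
The strategy is to realize $\widehat{V}(\psi_{\bm{\lambda},\bm{a},\varphi})$ as a subquotient of a tensor product of two $\wh\fg$-modules already known to lie in $\mathcal{O}^\times_{\mathrm{fin}}$: the module $\widehat{V}(\bm{\lambda},\bm{a})$ from Lemma \ref{reahw1}, and the pull-back along $\pi:\wh\fg\to\wh\CS$ of a suitably chosen Virasoro-like module $V_{\wh\CS}(\chi)$ from Lemma \ref{reahw2}. The key observation is that the two highest weights $\psi_{\bm{\lambda},\bm{a}}$ and $\psi_{\bm{\lambda},\bm{a},\varphi}$ agree on every summand of $\wh\CH$ except on the one-dimensional subspaces $\C\, t_1^m\rd_0$ for $m\ne 0$; and since $\pi(t_1^m\rd_0)=-\rd((0,m))/m$, this discrepancy is exactly what a pull-back of a $V_{\wh\CS}(\chi)$ can absorb.

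Concretely, I would define $\chi:\Z\to\C$ by $\chi(0)=0$ and
\[
\chi(m)\;=\;-\varphi(m)+m^2\sum_{i=1}^{k}a_i^m\bigl(\lambda_i(\rd_0)+\mu\lambda_i(\rk_0)\bigr),\quad m\ne 0.
\]
This is a sum of exp-polynomial functions and vanishes at $0$, so Lemma \ref{reahw2} gives that $V_{\wh\CS}(\chi)$ has finite dimensional weight spaces. I would then form $W=\widehat{V}(\bm{\lambda},\bm{a})\ot V_{\wh\CS}(\chi)$ with the diagonal $\wh\fg$-action, $V_{\wh\CS}(\chi)$ being viewed as a $\wh\fg$-module through $\pi$. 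A direct factor-by-factor check using Lemma \ref{reahw1}(i), $\wt\CS_+.v_\chi=0$, and $\rd((0,m)).v_\chi=(\chi(m)/m)v_\chi$ shows that $v=v_{\bm{\lambda}}\ot v_\chi$ satisfies $\wt\fg_+.v=0$ and $h.v=\psi_{\bm{\lambda},\bm{a},\varphi}(h)v$ for every $h\in\wh\CH$; the formula for $\chi$ was arranged precisely so that the $t_1^m\rd_0$-contributions from the two factors combine to $\varphi(m)/m^2$.

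Next I would verify that $W\in\mathcal{O}^\times_{\mathrm{fin}}$. Integrability is automatic: each non-isotropic root vector lies in $\wt\fg_c$, hence acts trivially on $V_{\wh\CS}(\chi)$ through $\pi$ and locally nilpotently on $\widehat{V}(\bm{\lambda},\bm{a})$ by Lemma \ref{reahw1}(iii), so locally nilpotently on $W$. Non-triviality of the $\wt\fg_c$-action is inherited from the left tensor factor. The step I expect to be the main obstacle is the finite dimensionality of the weight spaces of $W$: since both factors are highest weight modules for $\wh\fg$ and $\wh\CS$ respectively, their weight supports lie in sets of the form $(\text{highest weight})-Q_+$ for an appropriate positive cone $Q_+\subset\wh\fh^*$, and one has to check that the Minkowski sum of these two supports is locally finite. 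This requires an explicit root-theoretic analysis of the decomposition $\wh\fg=\wt\fg_+\oplus\wh\CH\oplus\wt\fg_-$, which is delicate because it is not a standard triangular decomposition in the Kac–Moody sense.

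Finally, the $\wh\fg$-submodule $M\subset W$ generated by $v$ is a highest weight module with highest weight $\psi_{\bm{\lambda},\bm{a},\varphi}$ in the sense of Definition \ref{defhhw}, so its unique irreducible quotient is $\widehat{V}(\psi_{\bm{\lambda},\bm{a},\varphi})$. Since $\mathcal{O}^\times_{\mathrm{fin}}$ is closed under subquotients -- and the non-triviality of the $\wt\fg_c$-action on the irreducible quotient is seen directly from the fact that the element $t_1^n\ot\al_j^\vee\in\wt\fg_c\cap\wh\CH$ acts by the scalar $\sum_i a_i^n\lambda_i(\al_j^\vee)$, which is non-zero for suitable $n$ and $j$ because the $a_i$'s are distinct non-zero and each $\lambda_i\ne 0$ belongs to $P_+$ -- the proposition follows.
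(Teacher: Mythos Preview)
Your approach is essentially the same as the paper's: define $\chi$ by exactly your formula, note it is exp-polynomial, form the tensor $\widehat{V}(\bm{\lambda},\bm{a})\otimes V_{\wh\CS}(\chi)$, observe that $v_{\bm{\lambda}}\otimes v_\chi$ is a highest weight vector of weight $\psi_{\bm{\lambda},\bm{a},\varphi}$, and pass to the irreducible quotient of the submodule it generates. The only remark is that what you flag as ``the main obstacle'' is not one: both factors have $\wh\fh$-weights supported in a translate of $-\Gamma_+$ (for $V_{\wh\CS}(\chi)$ the weights lie in $-\N\delta_0$, and $\delta_0\in\Gamma_+$), so for a fixed $\mu$ there are only finitely many ways to write $\mu=\mu_1+\mu_2$ with $\mu_i$ a weight of the $i$-th factor; the paper accordingly passes over this point without comment.
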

\begin{proof}
We define a function $\chi:\Z\rightarrow \C$ by letting
\[\chi(m)=m^2\(\sum_{i=1}^k a_i^m (\lambda_i(\rd_0)+\mu\lambda_i(\rk_0))\)-\varphi(m)\] for $m\in\Z$.
Notice that $\chi$ is an exp-polynomial polynomial function with $\chi(0)=0$ and that
\begin{align}\label{psilav}
\psi_{\bm{\lambda},\bm{a},\varphi}=\psi_{\bm{\lambda},\bm{a}}+\psi_\chi,\end{align} where
$\psi_\chi\in \wh\CH^*$ is defined as follows
\[\psi_\chi(t_1^n\ot h)=\psi_\chi(t_1^n\rk_0)=\psi_\chi(\rd_0)=0,\quad \psi_\chi(t_1^m\rd_0)=
-\frac{\chi(m)}{m^2},\]
for $h\in \dfh$, $m,n\in \Z$ and $m\ne 0$.

View the $\wh\CS$-module $V_{\wh\CS}(\chi)$ as a $\wh\fg$-module through the quotient map
$\wh\fg\rightarrow \wh\CS$. Form the tensor product $\wh\fg$-module $\wh{V}(\bm{\lambda},\bm{a})\ot V_{\wh\CS}(\chi)$.
It follows from Lemma \ref{reahw1} (i) and (ii) that the $\wh\fg$-module $\wh{V}(\bm{\lambda},\bm{a})$ is isomorphic to
the highest weight module $\wh{V}(\psi_{\bm{\lambda},\bm{a}})$.
Moreover,
one can easily check that as $\wh\fg$-modules, $V_{\wh\CS}(\chi)$ is isomorphic to the highest weight module
$\wh{V}(\psi_\chi)$.
Therefore, by \eqref{psilav},  $\wh{V}(\bm{\lambda},\bm{a},\chi)$  is a highest weight $\wh\fg$-module with highest weight
$\psi_{\bm{\lambda},\bm{a},\varphi}$.
Here, $\wh{V}(\bm{\lambda},\bm{a},\chi)$ stands for the $\wh\fg$-submodule  of $\wh{V}(\bm{\lambda},\bm{a})\ot V_{\wh\CS}(\chi)$
generated by the element $v_{\bm{\lambda}}\ot v_\chi$.

Now by Lemma \ref{reahw1} (iii) and Lemma \ref{reahw2}, we find that the  $\wh\fg$-module $\wh{V}(\bm{\lambda},\bm{a})\ot V_{\wh\CS}(\chi)$
lies in the category $\mathcal O_{\mathrm{fin}}^\times$ and so is its submodule $\wh{V}(\bm{\lambda},\bm{a},\chi)$.
Therefore, as the irreducible quotient of $\wh{V}(\bm{\lambda},\bm{a},\chi)$,  the $\wh\fg$-module $\wh{V}(\psi_{\bm{\lambda},\bm{a},\varphi})$ must contained in the category $\mathcal O_{\mathrm{fin}}^\times$, as required.
\end{proof}

\subsection{Proof of Theorem \ref{mainhw2}} In this subsection we complete the proof of Theorem \ref{mainhw2}.
We start with the following lemma.

\begin{lemt}\label{gands} Let $\psi\in \CE$  with $\psi(\wh\CH\cap \wt\fg_c)=0$ and $b\in \C$. Then one has that
$\wt\fg_c.\wt{V}(\psi,b)=0$ and $\wt\fg_c.\wh{V}(\psi)=0$.
\end{lemt}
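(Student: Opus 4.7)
The strategy is to prove $\wt\fg_c.\wh V(\psi)=0$ by realizing $\wh V(\psi)$ as pulled back from an irreducible module of $\wh\CS=\wh\fg/\wt\fg_c$, and then to deduce $\wt\fg_c.\wt V(\psi,b)=0$ via the loop decomposition of Proposition \ref{hwmeq} together with a $\rd_1$-shift argument.

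For the first assertion, define $\chi:\Z\to\C$ by $\chi(0)=0$ and $\chi(n)=-n^2\psi(t_1^n\rd_0)$ for $n\ne 0$, and form the irreducible $\wh\CS$-module $V_{\wh\CS}(\chi)$ from Subsection 5.2. Pull $V_{\wh\CS}(\chi)$ back along $\pi:\wh\fg\to\wh\CS$, and then deform by the scalar $\psi(\rd_0)\,\mathrm{Id}$ on the $\rd_0$-action; this produces a $\wh\fg$-module $\bar V$ on which $\wt\fg_c=\ker\pi$ acts trivially. Using the identifications $\pi(\rd_0)=\rd_0$ and $\pi(t_1^n\rd_0)=-\rd((0,n))/n$ for $n\ne 0$, the relation $\rd((0,n)).v_\chi=\chi(n)/n\cdot v_\chi$, and the hypothesis $\psi(\wh\CH\cap\wt\fg_c)=0$, a direct check shows that the generator $v_\chi\in\bar V$ satisfies $\wt\fg_+.v_\chi=0$ and $h.v_\chi=\psi(h)v_\chi$ for every $h\in\wh\CH$. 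Since $\wt\fg_c$ acts trivially on $\bar V$, every $\wh\fg$-submodule of $\bar V$ is in fact a $\wh\CS$-submodule, so $\bar V$ inherits irreducibility from $V_{\wh\CS}(\chi)$. Because $\wh V(\psi)$ is the unique irreducible quotient of $\wh M(\psi)$ one gets $\bar V\cong\wh V(\psi)$, which yields $\wt\fg_c.\wh V(\psi)=0$.

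For the second assertion, split into cases. When $\dim L_\psi>1$, the decomposition \eqref{comred} from the proof of Proposition \ref{hwmeq} realizes $\wt V(\psi,0)$ as a direct summand of the loop module $L(\wh V(\psi))$; since the $\wt\fg$-action on $L(\wh V(\psi))$ is defined componentwise from the $\rd_1$-graded $\wh\fg$-action on $\wh V(\psi)$, one obtains $\wt\fg_c.L(\wh V(\psi))=0$ and hence $\wt\fg_c.\wt V(\psi,0)=0$. A standard argument then shows that $\wt V(\psi,b)$ and $\wt V(\psi,0)$ are isomorphic as $\wh\fg$-modules --- both arise from the same induced $\wh\fg$-structure and differ only by the $\rd_1$-shift of the inducing $\wt\CH$-module --- so the conclusion transfers to arbitrary $b\in\C$ because $\wt\fg_c\subset\wh\fg$. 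When $\dim L_\psi=1$, the generator $\tilde v\in\wt V(\psi,b)$ is itself a $\wh\CH$-eigenvector of weight $\psi$; the submodule $\U(\wh\fg).\tilde v$ is therefore an irreducible highest-weight $\wh\fg$-module isomorphic to $\wh V(\psi)$, and it equals all of $\wt V(\psi,b)$ because $\rd_1.(x.\tilde v)=[\rd_1,x].\tilde v+b\,x.\tilde v\in\U(\wh\fg).\tilde v$ for every $x\in\U(\wh\fg)$. The first assertion then applies.

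The main subtlety is the scalar twist by $\psi(\rd_0)\,\mathrm{Id}$ in the first stage: the derivation $\rd_0$ lies in $\wh\CH$ but not in $\wt\fg_c$, so its $\psi$-value is unconstrained by the hypothesis and must be accommodated separately by this twist in the construction of $\bar V$. Once this is set up correctly, everything else reduces to routine bookkeeping with the definitions.
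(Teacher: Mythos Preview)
Your treatment of $\wh V(\psi)$ is essentially the paper's argument: realize the module over the quotient $\wh\CS=\wh\fg/\wt\fg_c$ and pull back along $\pi$. The paper packages this slightly differently---it starts directly from the $\wt\CS_0$-module $L_{\psi,b}$ (which already carries the correct $\rd_0$-eigenvalue), so no separate $\psi(\rd_0)\,\mathrm{Id}$ twist is needed---but the content is the same.

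For $\wt V(\psi,b)$, however, you take an unnecessarily circuitous route. The paper simply repeats the same construction one level up: since $\wt\CS_0\cong\wt\CH/(\wh\CH\cap\wt\fg_c)$, the hypothesis lets one view $L_{\psi,b}$ as an irreducible $\wt\CS_0$-module, induce to $\wt\CS$, take the irreducible quotient, and pull back along $\pi:\wt\fg\to\wt\CS$. This is uniform in $b$ and in $\dim L_\psi$; no case split and no appeal to the loop decomposition \eqref{comred} is needed. Your detour through Proposition \ref{hwmeq} works when $\dim L_\psi>1$, but it obscures the point.

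More seriously, your $\dim L_\psi=1$ case has a gap. You assert that $\U(\wh\fg).\tilde v$ is an \emph{irreducible} highest-weight $\wh\fg$-module isomorphic to $\wh V(\psi)$, but you only verify that $\tilde v$ is a highest-weight vector and that $\U(\wh\fg).\tilde v=\wt V(\psi,b)$. Irreducibility of $\wt V(\psi,b)$ as a $\wt\fg$-module does not immediately give irreducibility as a $\wh\fg$-module: a proper $\wh\fg$-submodule need not be $\rd_1$-stable. The claim is in fact true---one checks that for any proper $\wh\fg$-submodule $W$ of $\wt M(\psi,b)$ the space $W+\rd_1.W$ is again a proper $\wh\fg$-submodule (because the $\wh\fh$-weight space at $\underline\psi$ is one-dimensional and $\rd_1$ preserves $\wh\fh$-weights), so the maximal proper $\wh\fg$-submodule is $\rd_1$-stable and hence coincides with the maximal proper $\wt\fg$-submodule---but this argument is missing from your write-up. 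Once you see the paper's direct approach, though, the entire case distinction and this repair become superfluous.
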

\begin{proof}  Since $\wt\CS_0\cong \wt\CH/\wh\CH\cap \wt\fg_c$, there is a natural  irreducible $\wt\CS_0$-module structure on $L_{\psi,b}$. Extend it to be an $(\wt\CS_+\oplus \wt\CS_0)$-module by letting $\wt\CS_+$ acts trivially.
Write $\wt{V}_{\wt\CS}(\psi,b)$ for the irreducible quotient of the induced $\wt\CS$-module
$\U(\wt\CS)\otimes_{\U(\wt\CS_+\oplus \wt\CS_0)}L_{\psi,b}$, and view  it as a $\wt\fg$-module
through the quotient map $\pi$. Then it is easy to see that as $\wt\fg$-modules, $\wt{V}_{\wt\CS}(\psi,b)$ is isomorphic to $\wt{V}(\psi,b)$.
This proves that  $\wt\fg_c.\wt{V}(\psi,b)=0$.  The proof of the second assertion is similar to the first one and is omitted.
\end{proof}

For $i=0,1,\cdots,\ell$, we form the following subalgebras of $\wt\fg$
\[\wt{\mathfrak{sl}}_2(\al_i)=\text{Span}_{\C}\{t_1^m\otimes x_{\al_i}^+, t_1^m\otimes \al_i^\vee,
t_1^m\otimes x_{\al_i}^-,\rk_1, \rd_1\mid m\in\Z\}, \]
where $t_1^m\ot x_{\al_0}^\pm=t_0^{\pm 1}t_1^m\otimes x_{\theta}^\mp$ and $t_1^m\ot \al_0^\vee=t_1^m\rk_0-t_1^m\otimes \theta^\vee$.
It is obvious that these subalgebras  of $\wt\fg$ are isomorphic
to the affine Kac-Moody algebra of type $A_1^{(1)}$.
Write $\wh{\mathfrak{sl}}_2(\al_i)=\wt{\mathfrak{sl}}_2(\al_i)\cap \wh\fg$ for the derived subalgebra of $\wt{\mathfrak{sl}}_2(\al_i)$.

\begin{lemt}\label{greater} Let $\psi\in \CE$ and $b\in \C$. If the irreducible highest weight $\wt\fg$-module $\wt{V}(\psi,b)\in \vartheta_{\mathrm{fin}}^\times$, then $\dim L_\psi>1$.
\end{lemt}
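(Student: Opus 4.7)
The plan is to argue by contradiction: assuming $\dim L_\psi=1$, I will force $\wt\fg_c$ to act trivially on $\wt V(\psi,b)$, contradicting the hypothesis $\wt V(\psi,b)\in\vartheta_{\mathrm{fin}}^\times$.

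First I would unpack the condition $\dim L_\psi=1$. Since $L_\psi=\U(\wt\CH).1$ inside $L(\psi)$, with $h.1=\psi(h)t_1^n$ for $h\in\wh\CH_n$ and $\rd_1.1=0$, the equality $L_\psi=\C\cdot 1$ is equivalent to $\psi|_{\wh\CH_n}=0$ for every $n\ne 0$. In particular, $\psi(t_1^n\ot h)$, $\psi(t_1^n\rk_0)$ and $\psi(t_1^n\rd_0)$ all vanish for $h\in\dfh$ and $n\ne 0$.

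Next I would exploit the affine subalgebras $\wt{\mathfrak{sl}}_2(\al_i)$, $i=0,1,\ldots,\ell$, introduced just before the lemma. Let $v_0$ denote the highest weight vector of $\wt V(\psi,b)$, so $\wt\fg_+.v_0=0$ and $h.v_0=\psi(h)v_0$ for $h\in\wh\CH$. Because $\rk_1$ acts as $\psi(\rk_1)=0$, each $\wt{\mathfrak{sl}}_2(\al_i)$ descends to act on $\wt V(\psi,b)$ as an integrable $L^e(\mathfrak{sl}_2)$-module. Since every raising element $t_1^m\ot x_{\al_i}^+$ lies in $\wt\fg_+$ (its root $\al_i+m\delta_1$ belongs to $\wt\Delta_+$), the vector $v_0$ is a loop-highest-weight vector with weight $n_i:=\psi(\al_i^\vee)$, and the standard finite $\mathfrak{sl}_2$-argument combined with integrability yields $n_i\in\N$. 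The first step gives $t_1^m\ot\al_i^\vee.v_0=0$ for every $m\ne 0$, so Lemma \ref{eloop1}(iii) forces $n_i=0$. Taking $i=1,\ldots,\ell$ this delivers $\psi|_{\dfh}=0$; taking $i=0$, where $\al_0^\vee=\rk_0-\theta^\vee$, it delivers $\psi(\rk_0)=\psi(\theta^\vee)=0$.

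Combining these vanishings with $\psi(\rk_1)=0$ and the initial observation, $\psi$ annihilates a spanning set for
\[\wh\CH\cap\wt\fg_c=(\C[t_1,t_1^{-1}]\ot\dfh)\oplus\sum_{n\in\Z}\C t_1^n\rk_0\oplus\C\rk_1,\]
so $\psi(\wh\CH\cap\wt\fg_c)=0$. Lemma \ref{gands} then yields $\wt\fg_c.\wt V(\psi,b)=0$, the desired contradiction. I do not anticipate any deep obstacle; the main point requiring care is the $i=0$ case, where one must verify that $t_0t_1^m\ot x_\theta^-\in\wt\fg_+$ for every $m\in\Z$ (making $v_0$ a loop-highest-weight vector for $\wt{\mathfrak{sl}}_2(\al_0)$) and track the identification $\al_0^\vee=\rk_0-\theta^\vee$ to convert Lemma \ref{eloop1}(iii) into the vanishing of $\psi(\rk_0)$.
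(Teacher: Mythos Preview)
Your proposal is correct and follows essentially the same approach as the paper's proof: argue by contradiction, use the affine subalgebras $\wt{\mathfrak{sl}}_2(\al_i)$ together with Lemma~\ref{eloop1}(iii) to force $\psi(\al_i^\vee)=0$ for all $i$, and then invoke Lemma~\ref{gands}. The paper's version is terser, but the ideas are identical; your additional care with the $i=0$ case and the explicit identification of $\wh\CH\cap\wt\fg_c$ simply fills in details the paper leaves implicit.
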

\begin{proof} Otherwise, one has $L_\psi=\C 1$. For each $i=0,\cdots,\ell$, consider the integrable $\wt{\mathfrak{sl}}_2(\al_i)$-submodule
of $\wt{V}(\psi,b)$ generated by $1$. Note that $t_1^m\ot x_{\al_i}^+.1=0=t_1^n\ot \al_i^\vee.1$ for all $m,n\in \Z$ and $n\ne 0$.
By Lemma \ref{eloop1} (iii), one gets that $\al_i^\vee.1=0$. This together with Lemma \ref{gands} gives  $\wt\fg_c.\wt{V}(\psi,b)=0$, a contradiction.
\end{proof}

\begin{prpt}\label{charpsi}
Let $\psi\in\CE$ with $\dim L_\psi>1$. If the irreducible $\wh\fg$-module $\widehat{V}(\psi)\in\mathcal{O}_{\mathrm{fin}}^\times$, then
$\psi=\psi_{\bm{\lambda},\bm{a},\varphi}$ for some triple $(\bm{\lambda},\bm{a},\varphi)$ which satisfies the conditions (c1)-(c3).
\end{prpt}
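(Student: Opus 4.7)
The plan is to recover the triple $(\bm{\lambda}, \bm{a}, \varphi)$ from $\psi$ in two stages: the first extracts the data $(\bm{\lambda}, \bm{a})$ encoding the action of $\psi$ on the core components of $\wh\CH$ via the affine $\mathfrak{sl}_2$-subalgebras $\wh{\mathfrak{sl}}_2(\al_i)$, and the second identifies the remaining component as an exp-polynomial function by combining the realization in Proposition~\ref{comrela} with the characterization in Lemma~\ref{reahw2}.

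For the first stage, I fix $i \in \{0, 1, \ldots, \ell\}$ and consider the $L(\mathfrak{sl}_2)$-submodule $W_i = \U(\wh{\mathfrak{sl}}_2(\al_i)).v_\psi$ of $\wh V(\psi)$. The vector $v_\psi$ is annihilated by $t_1^m \ot x_{\al_i}^+ \in \wt\fg_+$ for every $m \in \Z$, while $t_1^m \ot \al_i^\vee \in \wh\CH$ acts on it as the scalar $\psi(t_1^m \ot \al_i^\vee)$. A key observation is that each such Cartan element has $\wh\fh$-weight zero, so the $\al_i^\vee$-eigenspaces of $W_i$ lie inside single $\wh\fh$-weight spaces of $\wh V(\psi)$ of the form $\mathrm{wt}(v_\psi) + k\al_i$ and hence are finite-dimensional; in particular, the top eigenspace is $\C v_\psi$. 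Applying Lemma~\ref{eloop2} to $W_i$ yields
\[\psi(t_1^m \ot \al_i^\vee) = \sum_j n_{i,j}\, a_{i,j}^m,\quad m \in \Z,\]
with $n_{i,j} \in \N$ and distinct $a_{i,j} \in \C^\times$; for $i = 0$ the identification $\al_0^\vee = \rk_0 - \theta^\vee$ converts this into the analogous expansion of $\psi(t_1^m \rk_0)$. Forming the distinct union $\{a_1, \ldots, a_k\}$ of all the $a_{i,j}$ and matching coefficients defines $\lambda_p \in \fh^*$ with $\lambda_p(\al_i^\vee) \in \N$ for every affine simple coroot, hence $\lambda_p \in P_+$. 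Discarding zero $\lambda_p$'s yields conditions (c1) and (c2); the free values $\lambda_p(\rd_0)$ are then fixed so that $\sum_p \lambda_p(\rd_0) = \psi(\rd_0)$.

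For the second stage, set $\psi' = \psi - \psi_{\bm{\lambda}, \bm{a}}$ and define $\chi(m) = -m^2 \psi'(t_1^m \rd_0)$, a function $\Z \to \C$ with $\chi(0) = 0$. By construction, $\psi'$ vanishes on $\wh\CH \cap \wt\fg_c$ and on $\rd_0$. Provided $\psi' \in \CE$, which can be verified by inspecting the associated $\wt\CH$-module $L_{\psi'}$, Lemma~\ref{gands} gives $\wt\fg_c.\wh V(\psi') = 0$, so that $\wh V(\psi')$ descends to an irreducible highest weight $\wh\CS$-module and is isomorphic to $V_{\wh\CS}(\chi)$. Using the identity
\[\varphi(m) = m^2 \psi(t_1^m \rd_0) = -\chi(m) + m^2 \sum_p a_p^m\bigl(\lambda_p(\rd_0) + \mu \lambda_p(\rk_0)\bigr),\]
together with the fact that $m^2 a_p^m$ is itself exp-polynomial, the exp-polynomial property of $\varphi$ reduces to that of $\chi$, which by Lemma~\ref{reahw2} is equivalent to $V_{\wh\CS}(\chi) \cong \wh V(\psi')$ having finite-dimensional weight spaces.

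The main obstacle is this last reduction: to transfer the finite-dimensionality of weight spaces of $\wh V(\psi)$ to $\wh V(\psi')$. Proposition~\ref{comrela} realizes $\wh V(\psi)$ as the irreducible quotient of the $\wh\fg$-submodule of $\wh V(\bm{\lambda}, \bm{a}) \otimes \wh V(\psi')$ generated by $v_{\bm{\lambda}} \otimes v_{\psi'}$, but this surjection runs the wrong way, so an a priori infinite-dimensional weight space of $\wh V(\psi')$ could in principle be absorbed into the kernel. To overcome this, I would analyze the weight-by-weight structure: since $\wh V(\bm{\lambda}, \bm{a}) \in \mathcal O_{\mathrm{fin}}^\times$ by Lemma~\ref{reahw1}, with a one-dimensional top weight space, a lifting argument at weights close to the top should produce an injection from each weight space of $\wh V(\psi')$ into a corresponding weight space of $\wh V(\psi)$, forcing finite-dimensionality of the former and completing the proof.
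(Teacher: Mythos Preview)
Your first stage --- extracting $(\bm{\lambda},\bm{a})$ via Lemma~\ref{eloop2} applied to the loop $\mathfrak{sl}_2$-submodules $\U(\wh{\mathfrak{sl}}_2(\al_i)).v_\psi$ --- matches the paper's argument exactly.

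The second stage diverges, and the gap you yourself flag is genuine and is not closed by the proposed ``lifting argument''.  You want $\wh V(\psi')\cong V_{\wh\CS}(\chi)$ to have finite-dimensional weight spaces, but the only relationship to $\wh V(\psi)$ is the surjection from a submodule of $\wh V(\bm{\lambda},\bm{a})\otimes \wh V(\psi')$ onto $\wh V(\psi)$, which runs the wrong way.  There is no evident injection from weight spaces of $\wh V(\psi')$ into those of $\wh V(\psi)$: the map $w\mapsto v_{\bm{\lambda}}\otimes w$ need not land in the cyclic submodule, and even if it did the composition with the quotient map has no reason to be injective.  Worse, by Lemma~\ref{reahw2} the finite-dimensionality of even a single weight space of $V_{\wh\CS}(\chi)$ is already equivalent to the exp-polynomiality of $\chi$, so the proposed detour is circular at the crucial step.

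The paper bypasses this entirely by working directly inside $\wh V(\psi)$.  The vectors $\rd_{(-1,i)}.v_\psi$ for $i\in\Z$ all lie in the single weight space $\wh V(\psi)_{\underline{\lambda}-\delta_0}$, which is finite-dimensional by hypothesis, so some nontrivial finite combination $\sum_{i=0}^n p_i\,\rd_{(-1,m+i)}.v_\psi$ vanishes.  Applying $\rd_{(1,s)}$ for arbitrary $s$ and using the bracket
\[
[\rd_{(1,s)},\rd_{(-1,j)}]=(s+j)\,\rd_{(0,s+j)}+\mu(s+j)^2\,t_1^{s+j}\rk_0
\]
produces a fixed-length linear recurrence satisfied by the function $m\mapsto m\,\psi(\rd_{(0,m)})+\mu\, m^2\psi(t_1^m\rk_0)$, forcing it to be exp-polynomial.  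Since $m^2\psi(t_1^m\rk_0)$ is exp-polynomial from the first stage, so is $\varphi$.  This argument uses only the finite-dimensionality of one weight space of $\wh V(\psi)$ and a single commutator computation, and never needs to control $\wh V(\psi')$.
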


\begin{proof} Let $i=0,\cdots,\ell$. Since $\rk_1.v_\psi=0$,  $\U(\wh{\mathfrak{sl}}_2(\al_i)).v_\psi$ is an integrable
$L(\mathfrak{sl}_2)$-module with finite dimensional weight spaces.
By applying Lemma \ref{eloop2} with $V=\U(\wh{\mathfrak{sl}}_2(\al_i)).v_\psi$ and $v=v_\psi$,
we know that there exist non-negative integers $p_{i1},\cdots,p_{it_i}$  and distinct non-zero complex numbers
$a_{i1},\cdots,a_{it_i}$ such that
\begin{align}\label{aijpij1}t_1^m\ot \al_i^\vee.v_\psi=(\sum_{1\le j\le t_i} p_{ij}a_{ij}^m)v_\psi,\end{align}
for $m\in \Z$. Notice that one can conclude from \eqref{aijpij1} and Lemma \ref{gands} that the set
\begin{align}\label{aijpij2}\{a_{ij}\mid 0\le i\le \ell, 1\le j\le t_i, p_{ij}>0\}\end{align}
is non-empty.

 Let $a_1,\cdots,a_k$ be the distinct elements in the set \eqref{aijpij2} and let $\lambda_1,\cdots,\lambda_k$ be
 the elements in $P_+\setminus\{0\}$ defined by $\lambda_s(\rd_0)=\delta_{1,s}\psi(\rd_0)$ and
 \begin{equation*}
 \lambda_s(\al_i^\vee)=\begin{cases} p_{ij},\ \te{if}\ a_s=a_{ij}\ \te{for some }j,\\
 0,\quad \te{otherwise},
 \end{cases}
 \end{equation*}
 for $s=1,\cdots,k$ and $i=0,\cdots,\ell$.
 Then by \eqref{aijpij1} one has that
 \begin{align}\label{charpsi1} \psi(t_1^m\ot \al_i^\vee)=\sum_{1\le s\le k} \lambda_s(\al_i^\vee)a_{s}^m,\quad \psi(\rd_0)=\sum_{1\le s\le k} \lambda_s(\rd_0),\end{align}
for $m\in \Z$ and $i=0,\cdots,\ell$.

Now it remains to prove that the map
\begin{align}\label{charpsi2} \varphi:\Z\rightarrow \C,\quad \varphi(m)=-m\psi(\rd_{(0,m)}),\quad m\in \Z,
\end{align}
 is an exp-polynomial function.
For convenience, we introduce the following linear map 
\[\Psi: \CR\rightarrow \U(\wh\fg),\quad t^{\bm{m}}\mapsto \rd_{\bm{m}},\,\bm{m}\in \Z^2.\]
For each $\eta\in \fh^*$, view it as an element $\wh\fh^*$ by letting $\eta(\rk_1)=0$.
 Then  $\rd_{(-1,i)}.v_\psi\in \wh{V}(\psi)_{\underline{\lambda}-\delta_0}$ for $i\in\Z$, where $\underline{\lambda}=\sum_{1\le s\le k}\lambda_s$ and $\wh{V}(\psi)_{\underline{\lambda}-\delta_0}$ denotes the weight space of $\wh{V}(\psi)$ with weight $\underline{\lambda}-\delta_0$.
Since $\dim \wh{V}(\psi)_{\underline{\lambda}-\delta_0}<\infty$, we know that there exists an $m\in\Z$
and a nonzero polynomial $P(t_1)=\sum_{i=0}^{n}p_it_1^i\in\C[t_1]$ with $p_0p_n\neq 0$ such that
\[\Psi(t_{0}^{-1}t_1^m P(t_1)).v_\psi=0.\]
For any $s\in\Z$, we have
\begin{align*}
&0=\Psi(t_0t_1^s).\Psi(t_{0}^{-1}t_1^mP(t_1)).v_\psi=[\rd_{(1,s)},\sum_{i=0}^n p_i\rd_{(-1,l+i)}].v_\psi\\
&=\sum_{i=0}^{n}p_i\{(m+i+s)\rd_{(0,m+i+s)}+\mu(m+i+s)^2t_1^{m+i+s}\rk_0\}.v_\psi.\end{align*}
This gives
\begin{align}\label{exp-poly1}
\psi\big(\sum_{i=0}^np_i\{(m+i)\rd_{(0,m+i)}+\mu(m+i)^2t_1^{m+i}\rk_0\}\big)=0,\end{align}
for all $m\in\Z$.

Define a function $\varphi':\Z\rightarrow \C$ by letting
\[\varphi'(m)=m\psi(\rd_{(0,m)})+\mu m^2\psi(t_1^m\rk_0)\]
for $m\in \Z$. Then the equation \eqref{exp-poly1} can be rewritten as follows
\begin{align}\label{charpsi3} \sum_{i=0}^n p_i\varphi'(m+i)=0.
\end{align}
It is well-known that the equality \eqref{charpsi3} implies $\varphi'$ is an exp-polynomial function (see \cite[Remark 3.6]{LS} for example).
Now, by \eqref{charpsi1} and \eqref{charpsi2}, one gets that
\[\varphi(m)=-\varphi'(m)+\mu m^2 \sum_{s=1}^k \lambda(\rk_0) a_i^m\]
for $m\in \Z$.
This proves that $\varphi$ is indeed an exp-polynomial function. \end{proof}

Finally, it is clear that Theorem \ref{mainhw2} follows from Lemma \ref{greater}, Proposition \ref{hwmeq}, Proposition \ref{comrela} and
Proposition \ref{charpsi}.

\end{document}